\newcommand{\R}{\mathbb{R}}
\newcommand{\N}{\mathbb{N}}
\newcommand{\C}{\mathbb{C}}
\newcommand{\ov}[1]{\overline{#1}}
\newcommand{\eps}{\epsilon}
\newcommand\supp{\operatorname{supp}} 
\newcommand\spec{\operatorname{Spec}}
\newcommand\p{\partial}
\newcommand\spn{\operatorname{span}} 
\newcommand\Ker{\operatorname{Ker}} 
\newcommand\osupp{\operatorname{osupp}}
\newtheorem{thm}{Theorem}[section]
\newtheorem{cor}[thm]{Corollary}
\newtheorem{lem}[thm]{Lemma}
\newtheorem{prop}[thm]{Proposition}
\newtheorem{rem}[thm]{Remark}
\theoremstyle{definition}
\numberwithin{equation}{section}
\begin{document}

\title[]{The monotonicity method for inclusion detection and the time harmonic elastic wave equation}

\date{}

\author[]{Sarah Eberle-Blick}
\address{Institute of Mathematics, Goethe University Frankfurt, Germany}
\email{eberle@math.uni-frankfurt.de}

\author[]{Valter Pohjola}
\address{Institute of Mathematics, Goethe University Frankfurt, Germany}
\email{valter.pohjola@gmail.com}

\begin{abstract}
We consider the problem of reconstructing inhomogeneities in 
an isotropic elastic body using time harmonic waves.
Here we extend the so called  monotonicity  method 
for inclusion detection and show how to determine   certain types of inhomogeneities 
in the Lamé parameters and the density.
We also included some numerical tests of the method.
\end{abstract}

\maketitle

\tableofcontents

\section{Introduction}

\noindent
We study the problem of reconstructing the shape of inhomogeneities in elastic bodies 
from fixed frequency oscillations.
This problem is of importance in
nondestructive testing, and has potential applications in engineering, geoscience and medical imaging.
Our main aim is to formulate a monotonicity based reconstruction method.

The elastic properties of an isotropic elastic body $\Omega \subset \R^3$ can in the linear regime
be described by means of the Lamé parameters $\lambda$ and $\mu$.
The effect of a time harmonic oscillation in such a body $\Omega$ is described by
the Navier equation, which gives the behaviour of the displacement field 
$u : \Omega \to \R^3$, $u \in H^1(\Omega)^3$ of the solid body $\Omega$.
Here we consider $\Omega \subset \R^3$ with Lipschitz boundary,
and the Navier equation in terms of the following boundary 
value problem
\begin{align}  \label{eq_bvp1}
\begin{cases}
\nabla \cdot (\C\,  \hat \nabla u )  + \omega^2\rho u &= 0, \quad\text{in}\,\,\Omega,\\
\hspace{1.8cm}(\C\,  \hat \nabla u ) \nu  &= g, \quad\text{on}\,\,\Gamma_N, \\
\hphantom{\hspace{1.8cm}(\C\,  \hat \nabla u ) } u  &= 0, \quad\text{on}\,\,\Gamma_D, 
\end{cases}
\end{align}
where $\Gamma_N$ and $\Gamma_D$ are such that
$$
\Gamma_N, \Gamma_D \subset \p\Omega \text{ are open }, \qquad \Gamma_N \neq \emptyset, \qquad
\p \Omega = \ov{\Gamma}_N \cup \ov{\Gamma}_D,
$$
and where $\hat \nabla u  = \frac{1}{2}(\nabla u + (\nabla u)^T)$ is the symmetrization of the Jacobian
(or the strain tensor), 
and $\C$ is the 4th order tensor defined by
$$
(\C A)_{ij} = 2\mu A_{ij} + \lambda \operatorname{tr}(A) \delta_{ij},
\quad \text{ where } A \in \R^{3 \times 3},
$$
and $\delta_{ij}$ is the Kronecker delta. Here
$\lambda,\mu \in L^\infty_+(\Omega)$ are scalar functions called the Lamé parameters, that describe the elastic  properties
of the material, $\rho \in L^\infty_+(\Omega)$ is the density of the material,
and $\omega \neq 0$ the angular frequency of the oscillation, and $\nu$
is the outward pointing unit normal vector to the boundary $\p \Omega$. 
See sections \ref{sec_prelim} and \ref{sec_wellposed} for more details and definitions.
The vector field $g \in L^2(\Gamma_N)^3$ acts as the source
of the oscillation, and since $\C \,\hat \nabla u$ equals by Hooke's law to the Cauchy stress tensor, we see 
that the boundary condition $g$ specifies the traction on the surface $\p \Omega$.

We also make the standing assumption that $\omega \in \R$ is not a resonance frequency. By this we mean that
zero is not an eigenvalue for the mixed eigenvalue value problem \eqref{eq_Neumann}. When 
this assumption holds, the problem \eqref{eq_bvp1} admits a unique solution
for given boundary condition $g\in L^2(\Gamma_N)^3$. See Corollary \ref{cor_ZeroEigenvalue}. We can thus define the Neumann-to-Dirichlet map
$\Lambda:L^2(\Gamma_N)^3 \to L^2(\Gamma_N)^3$, as
$$
\Lambda: g \mapsto u|_{\Gamma_N}. 
$$
Thus $\Lambda$  maps the traction to the displacement $u|_{\Gamma_N}$ on the boundary.

\medskip
\noindent
In this paper we deal with the shape reconstruction problem, which is also known as the 
inclusion detection problem.
We are more specifically interested in
determining the region where the material properties $\lambda$, $\mu$ or $\rho$ differ from
some known constant background values $\lambda_0$, $\mu_0$ or $\rho_0$, when given 
a set of boundary measurements in the form of a Neumann-to-Dirichlet map $\Lambda$.
This problem corresponds physically to determining the inhomogeneous regions in the body $\Omega$ from 
boundary measurements that give the displacement due to  some tractions applied at the boundary.
Here we more specifically assume that $\lambda_0,\mu_0,\rho_0 > 0$ are some known constants,
and that there is a jump in the material parameters
\begin{align*} 
\lambda &= \lambda_0 + \chi_{D_1} \psi_1 ,\quad D_1 \Subset  \Omega, \\ 
\mu &= \mu_0 + \chi_{D_2} \psi_{2}, \quad D_2 \Subset  \Omega, \\ 
\rho &= \rho_0 - \chi_{D_3} \psi_{3}, \quad D_3 \Subset  \Omega, 
\end{align*}
where $\chi_{D_j}$ are the characteristic functions of the sets $D_j$ and
$\psi_j |_{D_j} \in L_+^\infty(D_j)$, so that there is a jump in the material parameters at the 
boundaries $\p D_j$ of the regions where the material parameters differ from the background values.

We study this problem using the so called monotonicity method. The monotonicity method 
is a shape reconstruction method first formulated in  \cite{TR02,Ta06} and also \cite{Ge08},
for the conductivity equation.
In \cite{HPS19b} the method was extended to the analysis of time harmonic waves in the context
of a Helmholtz type equation.
In \cite{EH21} the monotonicity method was formulated for the recovery of inhomogeneities
in an isotropic elastic body in the stationary case. In addition, 
the monotonicity methods can be applied for realistic data as shown, e.g., for the stationary elastic problem in \cite{EM21}.
Our aim here is to study this problem in the non-stationary, or time harmonic case, of the
Navier equation.
The work here thus combines the ideas of  \cite{HPS19b} and \cite{EH21}.

\medskip
\noindent
Our main result is the justification of the  shape reconstruction procedure outlined 
in Algorithm \ref{alg_shapeInclusion}. This is  done in Theorems \ref{thm_inclusionDetection_rho_mu} and 
\ref{thm_inclusionDetection_EVbound}. 
To understand the algorithm  consider the set $D \subset \Omega$ given by
$$
D = \supp(\lambda - \lambda_0) \cup \supp(\mu-\mu_0) \cup \supp(\rho-\rho_0).
$$
The set $D$ is the region where the material parameters differ from background.
Algorithm \ref{alg_shapeInclusion} approximates the set $\osupp(D)$, which stands for the outer support of the set 
$D$\footnote{We extend the terminology of \cite{HU13} to sets.}.
See definition \eqref{eq_def_osupp}. 
The set $\osupp(D)$ coincides with $D$ if $\Omega \setminus D$ is connected, and if not, then $\osupp(D)$
corresponds intuitively to $D$  and the union of all its internal cavities. 

Algorithm \ref{alg_shapeInclusion} generates a collection of subsets $\mathcal A$, 
such that $\cup \mathcal{A}$ approximates the set $\osupp(D)$. This 
gives a good approximation of the shape of the inhomogeneous region $D$, 
disregarding any internal cavities.
Algorithm \ref{alg_shapeInclusion} works roughly by choosing a collection of subsets
$\mathcal{B} = \{ B \subset \Omega\}$, and building an approximating collection $\mathcal{A}$ 
by choosing $B$, such that $B \subset \osupp(D)$. An approximation  of $\osupp(D)$ is then obtained
as $\cup \mathcal{A}$, from which we have removed any internal cavities.
Note that in Algorithm \ref{alg_shapeInclusion} $\Lambda$ denotes 
the measured Neumann-to-Dirichlet map, and $\Lambda^\flat$ a test Neumann-to-Dirichlet map
built using the set $B$ and the prescripts in Theorems
\ref{thm_inclusionDetection_EVbound} or \ref{thm_inclusionDetection_rho_mu}.

\begin{algorithm} \label{alg_shapeInclusion}
\caption{Reconstruction of the shape of an inhomogeneity $\osupp(D) \subset \Omega$}
\begin{algorithmic}[1]

\STATE  Choose a collection of sets $\mathcal{B} = \{ B \subset \Omega\}$.

\STATE  Set the approximating collection $\mathcal{A} = \{\}$.

\STATE  Compute $M_0$ using Theorem  \ref{thm_inclusionDetection_EVbound}.

\FOR{  $B \in \mathcal{B}$}

\FOR{  $\Lambda^\flat$ with parameters varied as suggested by Theorem \ref{thm_inclusionDetection_EVbound}}

\STATE Compute $N_B := \sum_{\sigma_k < 0} 1$, where $\sigma_k$ are the eigenvalues of $\Lambda - \Lambda^\flat$ 
\STATE (where the eigenvalues are counted as many times as its multiplicity indicates).

\IF {$ N_B \leq M_0 $}

\STATE 
Add $B$ to the approximating collection $\mathcal{A}$, since 
by Theorem \ref{thm_inclusionDetection_EVbound} we have 
\STATE 
that $B \subset \osupp(D)$. 

\ELSE

\STATE Discard $B$, since by Theorem \ref{thm_inclusionDetection_EVbound} $B \not \subset D_j$, $j=1,2,3$.

\ENDIF
\ENDFOR
\ENDFOR

\STATE  Compute the union of all elements in $\mathcal{A}$ and all components of 
$\Omega \setminus \cup \mathcal{A}$ not connected 
\STATE to $\p\Omega$. The resulting set is an approximation of $\osupp(D)$.

\end{algorithmic}
\end{algorithm}

\noindent
The main novelty in Algorithm \ref{alg_shapeInclusion} is that it extends the reconstruction
method for  Helmholtz type equations formulated in \cite{HPS19b} to the Navier equation,
thus generalizing the work in \cite{HPS19b} and \cite{EH21}.
Monotonicity methods for shape reconstruction have in general various advantages.
Some of the benefits of the monotonicity methods are that it
is simple to formulate and implement, 
it can be implemented efficiently using a linearization of Neumann-to-Dirichlet maps,
and that indefinite inhomogeneities can be recovered.

The shape reconstruction  and  related reconstruction methods 
for the Navier equation have received a fair amount of attention. This is partly 
due to the importance of this problem in geophysical and engineering applications.
Several methods have been used to analyze the shape reconstruction problem 
for the Navier equation and related equations. For the Navier  equation we have  
e.g. \cite{HKS12,HLZ13,EH19,GK08}, which are based on the factorization method.
For work based on iterative and other  methods  see  e.g. \cite{SFHC14,BYZ19,BHQ13,BC05} 
and  the references therein.
There are also a number of works related to the shape reconstruction problem in elasticity
in the stationary case $\omega =0$. See e.g. \cite{EH21,II08,Ik99}.
The shape reconstruction problem has also been extensively studied for scalar equations,
in particular  for electrical  impedance tomography, 
and acoustic and electromagnetic inverse problems. Several methods have been formulated. 
For this see e.g. \cite{Ta06,Ge08,GK08,Ik90,HPS19b,CDGH20}. 
Shape reconstruction using the monotonicity method for time harmonic waves have in particular
been explored by \cite{HPS19b,HPS19a,GH18,Fu20a,Fu20b,AG23}.

Calderón type inverse problems for isotropic linear elasticity  
have more generally been the subject of a number of studies. 
Early work on this problem include  \cite{Ik90} that deals with the linearized problem, and
\cite{ANS91} which deals with determining the coefficients at the boundary.
One of the first more comprehensive 
uniqueness  results  in the three dimensional case was obtained in \cite{NU94} and also in \cite{ER02}, where it is  shown that
the Lamé parameters $\mu$ and $\lambda$ are uniquely determined, in the stationary case by the Dirichlet-to-Neumann map
assuming that $\mu$ is close to a constant.
It should be noted that the general uniqueness problem is still open.
There are a number of earlier and subsequent works related to the Calderón type inverse problem.

We also present a preliminary numerical implementation of Algorithm \ref{alg_shapeInclusion}
in section \ref{sec_numerics}. 
We want to remark that Algorithm \ref{alg_shapeInclusion} is in  many ways an 
idealized formulation, and that it holds for a sufficiently fine discretization.
Since our computations do not have enough precision, we introduce a new
discrete bound $\tilde{M}$, which is motivated due to our numerical experiments
(see section \ref{sec_numerics}).
All in all, we end up with first numerical results for the elasto-oscillatory
case which shows us that in principle the monotonicity tests work.

We will now comment on the proofs and the main ideas that they involve in more detail. 
The analysis of the monotonicity method for time harmonic wave phenomena
in \cite{HPS19b} is largely based on spectral theoretic considerations.
Here we thus need at various points to fairly carefully analyze the 
spectral properties of
both the direct problem \eqref{eq_bvp1} in the form of \eqref{eq_Neumann},
and the Neumann-to-Dirichlet map as compact operator on $L^2(\Gamma_N)^3$.

One of the main ingredients of the
monotonicity method is the use of localized solutions. This was originally achieved
in \cite{Ge08} by the use of a range inclusion argument, which 
was also used to analyze the stationary case in \cite{EH21}.
Here we explore an alternative approach of localizing solutions using  
Runge approximation as in \cite{HPS19b}.
Runge approximation was originally formulated in the context of elliptic PDEs  
in \cite{La56} (for more on Runge approximation in the current context, see 
e.g. \cite{La56,HPS19b,HLL18,Po22}).
The Runge approach to localizing solutions is in principle straight forward,
see e.g. \cite{HLL18}.  
In the monotonicity method for time harmonic waves we however need 
to work modulo a finite dimensional subspace as in Lemma \ref{lem_monotonicity_ineq1} and in \cite{HPS19b}.
To create localized solutions we  need to approximate certain candidate solutions on subsets 
of $D_1,D_2\subset\Omega$, with desired properties.
For this we have to find enough candidate solutions with non-zero divergence. 
One central problem is for us thus to find a suitable criterion on a boundary condition of the Navier equation
that guarantees non-zero divergence. 
This is one of the main challenges in making our Runge argument work,
 and this analysis is contained in subsection \ref{sec_div}. The main idea here is to
relate the Navier equation to the time harmonic Maxwell equation, and utilize the theory
related to Maxwell's equations.
This connection can also be used to create solutions when $\mu$ and $\rho$ are constant, 
with no divergence.
The localized solutions also need to  have a non-zero strain tensor $\hat \nabla u$.
Achieving this is  simpler, and for this we use a variant of the Korn inequality, 
see equation \eqref{eq_korn2_2} and Lemma \ref{lem_symGradient}.

A major obstacle in recovering multiple coefficients from a single frequency $\omega$ 
is that uniqueness is expected to fail  when a density term is added to the stationary equation.
One expects there to be several distinct tuples of coefficients 
that match the given boundary measurements, in analogy with the scalar case where this happens.
A straight forward mechanism for obtaining counter examples to uniqueness in the scalar case
of optical tomography was given in \cite{AL98}, where it was
shown that one cannot recover both a diffusion and an absorption coefficient
from boundary measurements. 
This phenomenon also holds more widely, and thus a  similar form of non-uniqueness should  effect  
the corresponding inverse problem for the Navier equation.
We note in this context that our shape reconstruction method works in the  general case
only for inhomogeneities where the density either remains constant or becomes 
smaller than the background density. 
One should also note that that two different frequencies are enough to recover multiple coefficients
in the closely related acoustic problem, see \cite{Na88}.
One can also impose special conditions on the coefficients, such as them being
piecewise constant or analytic 
in order to recover several coefficients from a single frequency, see  \cite{Ha09}.
Piecewise constant coefficients are  in the case of the   Navier equation  explored
in \cite{BHFVZ17}, where  stability and  uniqueness and local reconstruction
are considered.

Several aspects of inverse problems are left unstudied in the current paper.
We only consider the specific form of perturbations, where the Lamé parameters increases and the density
decreases in the inhomogeneous regions, or where the Lamé parameters remain constant 
and the density increases. Natural questions  which we do not address here 
is if the case where all parameters increase in the homogeneity can somehow be handled,
and if we can get some improvements when setting some of the coefficients as constant.
Our numerical implementation is also in some aspects preliminary.  We do not 
consider a regularized method that could handle noisy data nor a linearized version
that would be more efficient.

\medskip
\noindent
The paper is structured as follows. In section \ref{sec_prelim} we review some definitions and properties 
that will be used in the rest of the paper. In section \ref{sec_direct_spec} we study the direct problem and the mixed 
eigenvalue problem related to \eqref{eq_bvp1}. Section \ref{sec_mono_ineq} contains the 
monotonicity inequality that underlies our analysis. In section \ref{sec_localization}
we create the localized solutions that become small or large in certain prescribed subsets.
In section \ref{sec_shape} we justify Algorithm \ref{alg_shapeInclusion}.
In the last section, we conduct some numerical tests of the algorithm.

\section{Preliminaries}\label{sec_prelim}

\noindent
In this section we will review some notations, definitions, and preliminary results that are used throughout the 
paper.
We begin by reviewing the definitions related to function spaces. We define
$$
L^\infty_+(\Omega) := \big\{ f \in L^\infty(\Omega) \;:\; \operatorname{essinf}_\Omega f > 0 \big\}.
$$
The space $H^1(\Omega)$ denotes the $L^2(\Omega)$ based Sobolev space with one weak derivative.
If $X$ is a function space, then $Z^n := Z \times \dots \times Z$, where the right hand side 
contains $n$ copies of $Z$.
The $L^2$-inner product is denoted by $(\cdot,\cdot)_{L^2}$, so that 
$$
(u,v)_{L^2(\Omega)^n} := \int_\Omega u \cdot v\,dx, \quad u,v \in L^2(\Omega)^n.
$$
We use the notation $\perp$ for orthogonality with respect to the $L^2$-inner product, unless otherwise stated,
so that 
$$
u \perp v \qquad \Leftrightarrow \qquad (u,v)_{L^2(\Omega)^n} = 0, \quad  \text{ when } u,v \in L^2(\Omega)^n.
$$
Next we will review some definitions related to matrices and vectors.
The Frobenius inner product $A:B$ is defined as
$$
A:B = \sum_{ij} A_{ij}B_{ij},  \qquad A,B \in \R^{m\times n}.
$$
We denote the Euclidean norm on $\R^{m \times n}$, $m,n \in \N$, by
$$
|A| = (A:A)^{1/2},\qquad \text{ when } A \in \R^{m \times n}.
$$
Note that this definition applies also to vectors and scalars.
The matrix divergence is give by
$$
(\nabla \cdot A)_{i} =  (\nabla \cdot (\operatorname{row}_i A))_i, \qquad A \in H^1(\Omega)^{3 \times 3}. 
$$
We will use a tensor form of the divergence theorem, that states
$$ 
\int_\Omega \nabla \cdot  A  u \,dx = - \int_\Omega A:\hat \nabla u \,dx + \int_{\p \Omega} A \nu \cdot u\,dS,
$$
for $A \in H^1(\Omega)^{n\times n}$. See \cite{Ci88} p. xxix. 
Note that this gives the tensor form of the divergence theorem, since
$A$ can here be  interpreted  as a 2nd order tensor, provided that the coefficients are regular enough.
The bilinear form related to equation \eqref{eq_bvp1} is given 
by
\begin{align}  \label{eq_weak}
B(u,v)  := -\int_\Omega 
2 \mu \hat \nabla u :\hat \nabla v + \lambda \nabla \cdot u \nabla \cdot v - \omega^2\rho u\cdot v\,dx, 
\end{align}
for all $u,v \in H^1(\Omega)^3$. For the precise definition of a weak solution that we employ here 
see section \ref{sec_wellposed}.
We will also use the notations
\begin{align} \label{eq_L}
L_{\lambda,\mu,\rho} u :=  \nabla \cdot (\C \hat \nabla u )  + \omega^2\rho u
= \nabla \cdot ( 2 \mu \hat \nabla u + \lambda (\nabla \cdot u) I ) + \omega^2\rho u,
\end{align}
where $I$ is the identity matrix. 

When the Lamé parameters and density are regular, and $u$ solves \eqref{eq_bvp1} with $g$, and
$v$ solves \eqref{eq_bvp1} with $h$,
we see by integrating by parts that
\begin{align}  \label{eq_NDmap}
B(u,v)  = -\int_{ \Gamma_N} g \cdot v \,dS = -( g, \Lambda h)_{L^2(\Gamma_N)^3}. 
\end{align}
As it will later turn out, this holds also for a weak solution $u \in H^1(\Omega)^3$ by definition \eqref{eq_weak2}.
Let $\Gamma \subset \p \Omega$ be open. We abbreviate the  boundary condition in \eqref{eq_bvp1} by
\begin{align*} 
\gamma_{\mathbb{C},\Gamma} u  = (\C \, \hat \nabla u ) \nu |_{\Gamma},
\end{align*}
or with $\gamma_\mathbb{\C} u $ if the boundary is clear from the context. 
Note that these notations are formal when $u \in H^1(\Omega)^3$, since we cannot in general take the trace of a 
$L^2(\Omega)^3$ function. In the low regularity case we understand the boundary condition in a weak sense
as suggested by \eqref{eq_weak2}. 
We can  also define $\gamma_{\mathbb{C}} u \in L^2(\Gamma_N)^3$, 
provided that $u \in H^1(\Omega)^3$ solves \eqref{eq_bvp1}. In this case we use duality,
so that
\begin{align*} 
-(\gamma_{\mathbb{C}} u, \, \varphi|_{\Gamma_N}  )_{L^2(\Gamma_N)^3}
= B(u,\varphi), \qquad \forall \varphi \in H^1(\Omega)^3. 
\end{align*}

\noindent
When the boundary value problem \eqref{eq_bvp1} admits a unique solution for 
all $g \in L^2(\Gamma_N)^3$, then the Neumann-to-Dirichlet map $\Lambda$  is well-defined.
\\
\\
The localized solutions we consider in section \ref{sec_localization}, are built using the 
unique continuation principle, which is hence also an important tool in the sequel.
We need a version of the unique continuation principle that applies to the Navier equation.
For this we use Theorem 1.2 in \cite{LNUW11}.
Notice that these results require the Lamé parameter
$\mu$ to be Lipschitz continuous.

\begin{prop} \label{prop_UCP} Assume that $\Omega$ is open and connected.
Let $\lambda,\rho \in L^\infty(\Omega)$ and $\mu \in W^{1,\infty}(\Omega)$, be such that
\begin{align*} 
&\mu(x),\; 2\lambda(x) + \mu(x) > \delta_0,\quad \forall \text{ a.e. } x \in \Omega, \\
&\| \mu \|_{ W^{1,\infty}(\Omega) } + \| \lambda \|_{ L^\infty(\Omega) } \leq M_0,\quad  
\|\rho \|_{ L^\infty(\Omega)  } \leq M_0,
\end{align*}
where $M_0,\delta_0 > 0$.
Assume that $u$, solves the equation $L_{\lambda,\mu,\rho} u = 0$ in $\Omega$ and $u = 0$
in some open subset $U \subset \Omega$, then $u=0$ in $\Omega$.
\end{prop}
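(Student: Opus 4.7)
\medskip

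The plan is to reduce this weak unique continuation statement to the strong unique continuation principle of Theorem~1.2 in \cite{LNUW11}, which is formulated under exactly the regularity and ellipticity hypotheses assumed here. That theorem yields (as a standard qualitative consequence) the following: if $v \in H^1(\Omega')^3$ solves $L_{\lambda,\mu,\rho} v = 0$ on a connected open set $\Omega'$ and $v$ vanishes to infinite order at some point $x_0 \in \Omega'$, then $v \equiv 0$ in $\Omega'$. The zero-order term $\omega^2 \rho u$, with $\rho \in L^\infty(\Omega)$, is absorbed harmlessly by the Carleman estimate underlying the cited result, so the hypotheses line up with those listed in the proposition.

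Granting the strong UCP, the proof reduces to two lines. I would pick any $x_0 \in U$. Since $U$ is open and $u \equiv 0$ on $U$, there is a ball $B(x_0, r) \subset U$, and for every $r' \leq r$ the $L^2$-mean
$$
r'^{-N} \int_{B(x_0, r')} |u|^2 \, dx = 0
$$
for every $N \in \N$. Hence $u$ trivially vanishes to infinite order at $x_0$ in the sense required by \cite{LNUW11}, and applying the strong UCP on the connected set $\Omega$ yields $u \equiv 0$ in $\Omega$.

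Should Theorem~1.2 of \cite{LNUW11} instead be read as a purely local statement (vanishing to infinite order at $x_0$ gives $u \equiv 0$ only on some neighborhood of $x_0$), the global conclusion still follows by the standard propagation argument: the set $V := \{x \in \Omega : u \equiv 0 \text{ on a neighborhood of } x\}$ is open and nonempty (it contains $U$), and using the quantitative doubling inequality of \cite{LNUW11} on any ball $B(x_0, R) \subset \Omega$ with $x_0 \in \partial V \cap \Omega$ — applied at a point $y \in V \cap B(x_0, R)$ where $u$ vanishes on a neighborhood — forces $u \equiv 0$ on $B(x_0, R)$, so $x_0 \in V$; connectedness of $\Omega$ then gives $V = \Omega$.

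The genuine difficulty has been delegated to \cite{LNUW11}: strong UCP for the Lamé system is subtle because its principal symbol is non-diagonal, and the proof rests on a Carleman estimate built by decomposing into longitudinal and transverse parts. This is where the Lipschitz regularity of $\mu$ together with the strong ellipticity $\mu, 2\lambda+\mu > \delta_0$ enter essentially, while $\lambda$ and $\rho$ need only be bounded. Once that Carleman machinery is in place, the reduction above is entirely routine, so no further obstacle should arise in writing out the proof.
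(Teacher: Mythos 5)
Your argument is the standard reduction from weak to strong unique continuation, and it matches the paper's proof, which simply states that the proposition is a direct consequence of Theorem 1.2 in \cite{LNUW11}; you have just spelled out the (routine) details of why vanishing on an open set implies vanishing to infinite order and how the connectedness propagation goes.
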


\begin{proof}
This is a direct consequence of the strong unique continuation principle 
of Theorem 1.2 in \cite{LNUW11}.
\end{proof}

\begin{prop} \label{prop_bndryUCP} Assume that $\p \Omega$ is $C^{1,1}$ smooth and that
$\lambda,\rho \in L^\infty(\Omega)$ and $\mu \in W^{1,\infty}(\Omega)$ are as in Proposition 
\ref{prop_UCP}. Suppose $u$ is such that
$$
L_{\lambda,\mu,\rho} u = 0, \quad \text{ in } \Omega,  \qquad u|_\Gamma = 0,  \qquad \gamma_\C u |_\Gamma = 0,
$$
where $\Gamma \subset \p \Omega$ is open and non-empty, then $u=0$ in $\Omega$.
\end{prop}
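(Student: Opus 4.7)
The plan is to reduce the boundary unique continuation statement to the interior strong UCP of Proposition \ref{prop_UCP} by extending $u$ by zero across the piece $\Gamma$ on which both the Dirichlet and the conormal traces vanish. First I would localize near $\Gamma$: pick $x_0 \in \Gamma$ and, using the $C^{1,1}$ regularity of $\p\Omega$, choose an open ball $V \subset \R^3$ centered at $x_0$ such that $V \cap \p\Omega \subset \Gamma$ and the open set $V \setminus \overline{\Omega}$ is non-empty. Set $\tilde\Omega := \Omega \cup V$; if $\Omega$ is connected (which we may assume, arguing componentwise otherwise) then so is $\tilde\Omega$. A Stein-type extension across the $C^{1,1}$ piece $V \cap \p\Omega$ produces $\tilde\mu \in W^{1,\infty}(\tilde\Omega)$ with $\tilde\mu|_\Omega = \mu$, while $\lambda$ and $\rho$ admit trivial $L^\infty$ extensions $\tilde\lambda,\tilde\rho$. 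Shrinking $V$ if necessary, the extended coefficients still satisfy the positivity and norm bounds required by Proposition \ref{prop_UCP}.

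Next I would define the zero extension $\tilde u$ by $\tilde u = u$ on $\Omega$ and $\tilde u = 0$ on $V \setminus \overline{\Omega}$. The hypothesis $u|_\Gamma = 0$ in the trace sense is precisely what guarantees $\tilde u \in H^1(\tilde\Omega)^3$. The key point is that $\tilde u$ solves $L_{\tilde\lambda,\tilde\mu,\tilde\rho}\tilde u = 0$ weakly on $\tilde\Omega$. For any $\varphi \in C_c^\infty(V)^3$, the natural bilinear form associated with the extended operator tested against $\tilde u$ and $\varphi$ reduces to the corresponding bilinear form on $\Omega \cap V$, which by the duality definition of $\gamma_\C u$ recalled earlier in section \ref{sec_prelim} equals $-(\gamma_\C u,\varphi|_{\Gamma \cap V})_{L^2(\Gamma_N)^3}$, and this vanishes by the second hypothesis.

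Finally I would apply Proposition \ref{prop_UCP} on the connected open set $\tilde\Omega$: since $\tilde u$ vanishes identically on the non-empty open subset $V \setminus \overline{\Omega}$, it must vanish throughout $\tilde\Omega$, and in particular $u = 0$ on $\Omega$.

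The main obstacle is the weak-equation verification of the extension. It is there that both boundary conditions play essential but distinct roles: $u|_\Gamma = 0$ is what places $\tilde u$ in $H^1(\tilde\Omega)^3$ (killing any jump that would otherwise contribute a surface gradient), while $\gamma_\C u|_\Gamma = 0$ is what eliminates the surface distribution that would otherwise appear in $\nabla \cdot (\mathbb{C}\,\hat\nabla \tilde u)$ when the extended function is differentiated distributionally across $\Gamma$. The $C^{1,1}$ regularity assumption on $\p\Omega$ enters exactly because extending $\mu$ across $\Gamma$ while preserving its $W^{1,\infty}$ regularity, as needed to invoke Proposition \ref{prop_UCP}, requires the boundary to be at least $C^{1,1}$.
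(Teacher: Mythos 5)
Your proposal is correct and follows the same route as the paper's own (very terse) proof: extend $u$ by zero across $\Gamma$ into a small ball and invoke the interior strong unique continuation of Proposition \ref{prop_UCP}. You have supplied the technical details the paper omits — the coefficient extension preserving $W^{1,\infty}$ regularity and positivity, the $H^1$-regularity of the zero extension via $u|_\Gamma=0$, and the verification via $\gamma_\C u|_\Gamma = 0$ that the extended function remains a weak solution — which are precisely the points needed to make the paper's two-line sketch rigorous.
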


\begin{proof}
We can show that the claim follows from Proposition \ref{prop_UCP}. If $x_0 \in \Gamma$, then
we can extend the solution $u$ by zero, to $B(x_0,r) \setminus \Omega$, where $B(x_0,r)$ is a ball,
with sufficiently small $r>0$.
The claim follows now  from Proposition \ref{prop_UCP}.
\end{proof}

\section{The direct problem and  spectral properties} \label{sec_direct_spec}

\noindent
In the next two subsections we will investigate the existence and uniqueness of
the solutions to \eqref{eq_bvp1} and a related mixed eigenvalue problem in some detail. 
We furthermore derive a norm estimate where the norm of the strain tensor controls the norm of the Jacobian, 
and use spectral representation to derive a convenient formula for the expression $B(w,w)$. 
These results will be of importance in the subsequent sections.

\subsection{A variational argument} \label{sec_wellposed}
In this subsection we investigate the well-posedness of the inhomogeneous problem
\eqref{eq_weak2}, and further derive a useful norm estimate.
In the next subsection we use these results in conjunction with Fredholm theory
to show that the boundary value problem \eqref{eq_bvp1} is well-posed, and investigate 
the related eigenvalue problem.
Through out this subsection we  assume that $\Omega \subset \R^3$ is a 
bounded Lipschitz domain and that  $\lambda,\mu,\rho \in L_+^\infty(\Omega)$.
Recall moreover that we assume that 
$$
\Gamma_N, \Gamma_D \subset \p\Omega \text{ are open }, \qquad \Gamma_N \neq \emptyset, \qquad
\p \Omega = \ov{\Gamma}_N \cup \ov{\Gamma}_D.
$$

We will first show that there exists a unique 
weak solution $u \in H^1(\Omega)^3$ to the mixed boundary value problem 
\begin{align}  \label{eq_bvp2}
\begin{cases}
\nabla \cdot (\C\,  \hat \nabla u )  + \omega^2\rho u + \tau u&= F, \\
\;\quad\quad\quad\quad\quad\quad(\gamma_\C u ) |_{ \Gamma_N} &= g, \\	
\;\quad\quad\quad\quad\quad\quad\quad\quad u |_{\Gamma_D} &= 0, 
\end{cases}
\end{align}
for suitable $\tau \in \R$, when $g \in L^2(\Gamma_N)^3$ and $F \in L^2(\Omega)^{3}$. 

In order to specify what we mean by a weak solution to \eqref{eq_bvp2}, we define the bilinear form 
$$
B_\tau(u,v) := B(u,v) + \tau (u,v)_{L^2(\Omega)^3}, \quad u,v \in \mathcal{V},
$$
where $\mathcal{V} \subset H^1(\Omega)^3$ is the closed subspace
$$
\mathcal{V} := \{ u \in H^1(\Omega)^3 \,:\, u|_{\Gamma_D} = 0 \}.
$$
The weak solution $u \in \mathcal{V}$ to \eqref{eq_bvp2} is a vector field that satisfies
\begin{align}  \label{eq_weak2}
B_\tau(u,v)  = - \int_{\Gamma_N} g\cdot v \,dS + \int_\Omega F \cdot v \,dx
\end{align}
for every 
$v \in \mathcal{V}$.\footnote{Note that it easy to check that the existence of a weak solution in this sense,
implies the existence of a strong solution with the desired boundary conditions, when given enough regularity.}

One of the main tools for obtaining existence and uniqueness results in linear elasticity 
is the second Korn inequality. We will need the following version of this inequality
\begin{align}  \label{eq_korn2_1}
\| u \|_{H^1(\Omega)^3} \leq C \big(\| \hat \nabla u \|_{L^2(\Omega)^{3 \times 3}} 
+ \| u \|_{ L^2(\Omega)^3}\big), \quad u \in H^1(\Omega)^3, 
\end{align}
where $C >0$ is a constant.
A proof of this can be found in \cite{OSY92}, see Theorem 2.4 p.17.
Next we prove existence and uniqueness for solutions to the boundary value problem
\eqref{eq_bvp2}.

\begin{prop} \label{prop_Wellposedness}
There exists a $\tau_0 \leq 0$, for which
the boundary value problem in \eqref{eq_bvp2} admits a unique weak solution
$u \in H^1(\Omega)^3$, that satisfies
\begin{align}  \label{eq_aprioriEst}
\| u \|_{ H^1(\Omega)^3} \leq C \big(\| g \|_{ L^2(\Gamma_N)^3}+ \| F \|_{ L^2(\Omega)^3 } \big).
\end{align}
\end{prop}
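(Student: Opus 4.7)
The plan is to apply the Lax--Milgram theorem to the bilinear form $-B_\tau$ on the closed subspace $\mathcal{V} \subset H^1(\Omega)^3$, choosing $\tau$ sufficiently negative so as to obtain coercivity. Note that $\mathcal{V}$ is a Hilbert space with the $H^1$ inner product because $\Gamma_D \subset \p\Omega$ is relatively open and the trace map is continuous.

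First I would verify boundedness. Since $\lambda,\mu,\rho \in L^\infty_+(\Omega)$, the Cauchy--Schwarz inequality gives immediately
$$
|B_\tau(u,v)| \leq C\,\|u\|_{H^1(\Omega)^3}\|v\|_{H^1(\Omega)^3}
$$
for all $u,v \in \mathcal{V}$, with $C$ depending on $\|\lambda\|_\infty,\|\mu\|_\infty,\|\rho\|_\infty,\omega,\tau$. For the right-hand side of \eqref{eq_weak2}, the continuity of the trace map $H^1(\Omega)^3 \to L^2(\p\Omega)^3$ together with Cauchy--Schwarz yields
$$
\Big| -\int_{\Gamma_N} g\cdot v\,dS + \int_\Omega F\cdot v\,dx \Big|
\leq C\big(\|g\|_{L^2(\Gamma_N)^3} + \|F\|_{L^2(\Omega)^3}\big)\|v\|_{H^1(\Omega)^3},
$$
so the right-hand side defines a bounded linear functional on $\mathcal{V}$.

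The key step is coercivity of $-B_\tau$, and this is where the second Korn inequality \eqref{eq_korn2_1} enters. Testing against $u$ and using $\mu \geq \mu_0 := \operatorname{essinf}\mu > 0$ and $\lambda \geq 0$, I obtain
$$
-B_\tau(u,u) \geq 2\mu_0\,\|\hat\nabla u\|_{L^2(\Omega)^{3\times 3}}^2
- \omega^2 \|\rho\|_{L^\infty(\Omega)}\|u\|_{L^2(\Omega)^3}^2 - \tau\,\|u\|_{L^2(\Omega)^3}^2.
$$
Applying \eqref{eq_korn2_1} in the form $\|\hat\nabla u\|_{L^2}^2 \geq C_K^{-1}\|u\|_{H^1}^2 - \|u\|_{L^2}^2$, this becomes
$$
-B_\tau(u,u) \geq \frac{2\mu_0}{C_K}\|u\|_{H^1(\Omega)^3}^2
- \bigl(2\mu_0 + \omega^2\|\rho\|_{L^\infty} + \tau\bigr)\|u\|_{L^2(\Omega)^3}^2.
$$
Choosing $\tau_0 := -\bigl(2\mu_0 + \omega^2\|\rho\|_{L^\infty}\bigr) < 0$, any $\tau \leq \tau_0$ kills the second term and leaves the coercivity estimate
$$
-B_\tau(u,u) \geq \frac{2\mu_0}{C_K}\,\|u\|_{H^1(\Omega)^3}^2, \qquad u\in\mathcal{V}.
$$

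With boundedness, coercivity, and the bound on the right-hand side in hand, the Lax--Milgram theorem applied to $-B_\tau$ delivers a unique weak solution $u\in\mathcal{V}\subset H^1(\Omega)^3$ to \eqref{eq_bvp2}, satisfying the a priori bound \eqref{eq_aprioriEst}. The main obstacle is the coercivity step, which is subtle because the density term $-\omega^2\rho\,u\cdot u$ has the wrong sign; the remedy is precisely to shift the operator by $\tau$ with $\tau$ sufficiently negative so that, after Korn, this indefinite contribution is absorbed into the $H^1$ bound produced by the strain-tensor term.
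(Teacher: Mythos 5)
Your proposal is correct and follows essentially the same route as the paper: apply Lax--Milgram to $B_\tau$ (up to a sign) on $\mathcal{V}$, establish boundedness by Cauchy--Schwarz and trace estimates, and obtain coercivity for sufficiently negative $\tau$ by the second Korn inequality \eqref{eq_korn2_1}, which lets the strain-tensor term absorb the indefinite density contribution after the shift. The only difference from the paper is that you carry out the Korn step slightly more explicitly and exhibit an admissible $\tau_0$ in closed form, which is a cosmetic refinement rather than a change of method.
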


\begin{proof}
We use the Lax-Milgram Lemma to prove the uniqueness and existence of a weak solution
(see e.g. Theorem 1.3 in \cite{OSY92}).
We need to show that the bilinear form $B_\tau: \mathcal{V}\times \mathcal{V}\to \R$ 
is coercive and continuous for some $\tau \leq 0$. For coercivity we need 
to show that 
\begin{align} \label{eq_Btau_coer}
|B_{\tau_0}(u,u)| \geq c \| u \|^2_{ H^1(\Omega)^3}, \quad c>0,
\end{align}
for some $\tau_0 \leq 0$. Let $\tau \leq 0$.
The Korn inequality \eqref{eq_korn2_1} gives that
\begin{align*} 
|B_\tau(u,u)| &\geq 
\int_\Omega 2 \mu \hat \nabla u :\hat \nabla u + \lambda \nabla \cdot u \nabla \cdot u 
-\tau \int_\Omega u^2\,dx
- \omega^2 \| \rho \|_{L^\infty}\int_\Omega u^2\,dx \\ 
&\geq
C \big(\| \hat \nabla u \|^2_{L^2(\Omega)^{3\times 3}} 
+\| \nabla \cdot u \|^2_{L^2(\Omega)} \big)
-(\tau + \omega^2\| \rho \|_{L^\infty})\| u \|^2_{L^2(\Omega)^3} \\
&\geq
C \| u \|^2_{H^1(\Omega)^3} 
-(C+\tau + \omega^2\| \rho \|_{L^\infty})\| u \|^2_{L^2(\Omega)^3}. 
\end{align*}
Thus if we choose a $|\tau|$ large enough with $\tau\leq 0$, the 2nd term is positive and can be dropped.
We thus see that we can choose a $\tau_0 \leq 0$  so that \eqref{eq_Btau_coer} holds.

The continuity of $B_\tau$ follows from the estimate
\begin{align*} 
|B_\tau(u,v)| &=  
\Bigg|\int_\Omega
2 \mu \hat \nabla u :\hat \nabla v + \lambda \nabla \cdot u \nabla \cdot v 
- (\tau + \omega^2\rho) u\cdot v\,dx
\Bigg| \\
&\leq
C (\| \hat \nabla u \|_{L^2(\Omega)^{3\times 3}} \| \hat \nabla v \|_{L^2(\Omega)^{3\times 3}}
+\| \nabla \cdot u \|_{L^2(\Omega)} \| \nabla \cdot v \|_{L^2(\Omega)}
+\| u \|_{L^2(\Omega)^3} \| v \|_{L^2(\Omega)^3}) \\
&\leq
C \| u \|_{H^1(\Omega)^3} \| v \|_{ H^1(\Omega)^3}.
\end{align*}
The last step before applying the Lax-Milgram Lemma is to check that 
the left side of \eqref{eq_weak} gives a continuous  functional on $\mathcal{V}$.
To this end note that by the Cauchy-Schwarz and trace inequalities  
\begin{align} \label{eq_rhs_cont}
- \int_{\Gamma_N } g\cdot v \,dS \leq 
\big\| g \,\big\|_{ L^2(\Gamma_N)^3 } \big\| v|_{\p \Omega} \big\|_{ L^2(\p\Omega)^3}
\leq C \| v \|_{ H^1(\Omega)^3},
\end{align}
and similarly by the Cauchy-Schwarz inequality 
\begin{align} \label{eq_rhs_cont_2}
\int_\Omega F \cdot v \,dx  \leq C \| v \|_{ H^1(\Omega)^3}.
\end{align}
The Lax-Milgram Lemma gives us thus the existence of a unique $u \in \mathcal{V} \subset H^1(\Omega)^3$
for which \eqref{eq_weak2} holds for all $v \in \mathcal{V}$, and when $\tau \leq 0$, $|\tau|$ is large enough.
We have thus found a weak solution in accordance with \eqref{eq_weak2}.

The final step is to verify the estimate of the claim. By 
\eqref{eq_weak2}, \eqref{eq_Btau_coer}, \eqref{eq_rhs_cont} and \eqref{eq_rhs_cont_2}
we have that
$$
\| u \|_{H^1(\Omega)^3 } \leq C (\| g \,\|_{ L^2(\Gamma_N)^3 }
+ \| F \|_{ L^2(\Omega)^{3}}).
$$
\end{proof}
\noindent
Later we will be interested in solutions, where the strain tensor $\hat \nabla u$ 
controls the entire Jacobian $\nabla u$. In order to investigate this we need to introduce 
some additional concepts. 
The space of rigid motions $\mathcal{R}$ is given by
$$
\mathcal{R} :=\big \{ \eta : \R^3 \to \R^3 \,:\, \eta(x) = Ax  + c,\; A \in \R^{3\times 3} 
\text{ is antisymmetric },
 c \in \R^3\big \}.
$$
(See e.g.  p.19 in  \cite{OSY92}.) With this subspace we can formulate
another version of the second Korn inequality that applies to
$u \in W$, where $W \subset H^1(\Omega)^3$ is a closed subspace and $ W\cap\mathcal{R} = \{0\}$.
For such $u$ we have that
\begin{align}  \label{eq_korn2_2}
\| u \|_{H^1(\Omega)^3} \leq C \| \hat \nabla u \|_{L^2(\Omega)^{3\times 3}}, 
\end{align}
where $C>0$ is a constant. For a proof of this see \cite{OSY92} theorem 2.5 p.19.
We can use \eqref{eq_korn2_2} to prove the following Lemma,
which gives a criterion on the boundary condition $g$ that allows for controlling 
the $H^1$-norm of the solution by the strain tensor, and which we use  in section \ref{sec_localization}.

\begin{lem} \label{lem_symGradient} Suppose $u \in H^1(\Omega)^3$ solves  \eqref{eq_bvp2}
with $F=0$, $\Gamma_D = \emptyset$, $\Gamma_N = \p\Omega$ and let $g \in L^2(\Omega)^3$ be such that
$$
g \perp \mathcal{R},\qquad \text{ in  the $L^2(\p \Omega)^3$-inner product}.
$$
then we have the estimate
\begin{align}  \label{eq_JacobianEst}
\| u \|_{H^1(\Omega)^3} \leq C \| \hat \nabla u \|_{L^2(\Omega)^{3\times 3}}, 
\end{align}
where $C>0$ is a constant.
\end{lem}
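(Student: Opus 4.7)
The plan is to apply the refined Korn inequality \eqref{eq_korn2_2} on a suitable closed subspace $W$ that contains $u$ and meets $\mathcal{R}$ trivially. To identify $W$, I would first test the weak formulation \eqref{eq_weak2} against an arbitrary rigid motion $\eta \in \mathcal{R}$. Since $\hat \nabla \eta = 0$ (and thus $\nabla \cdot \eta = 0$), the elastic stress terms in $B_\tau(u,\eta)$ vanish, leaving only
\begin{equation*}
B_\tau(u,\eta) = \int_\Omega (\omega^2 \rho + \tau)\, u \cdot \eta \, dx.
\end{equation*}
With $F = 0$ and $\Gamma_N = \p\Omega$, the right-hand side of \eqref{eq_weak2} equals $-\int_{\p\Omega} g\cdot \eta\, dS$, which vanishes by the assumption $g \perp \mathcal{R}$ in $L^2(\p\Omega)^3$. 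Hence $u$ lies in
\begin{equation*}
W := \bigl\{\, v \in H^1(\Omega)^3 : \int_\Omega (\omega^2 \rho + \tau)\, v \cdot \eta\, dx = 0 \text{ for all } \eta \in \mathcal{R} \,\bigr\},
\end{equation*}
which is a closed subspace of $H^1(\Omega)^3$, being cut out by the six continuous linear conditions associated with a basis of $\mathcal{R}$.

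It remains to verify $W \cap \mathcal{R} = \{0\}$, after which \eqref{eq_korn2_2} applied on $W$ gives \eqref{eq_JacobianEst} directly. If $\eta \in W \cap \mathcal{R}$, plugging $\eta$ itself into the defining condition of $W$ yields $\int_\Omega (\omega^2 \rho + \tau) |\eta|^2\, dx = 0$. Provided $\omega^2 \rho + \tau$ has constant nonzero sign on $\Omega$, this forces $|\eta|^2 = 0$ a.e., and thus $\eta = 0$ as an element of the finite-dimensional space $\mathcal{R}$. In the setting of the paper this sign condition is available: either $\tau = 0$ (the original Navier problem), in which case $\omega^2 \rho > 0$ by $\rho \in L^\infty_+(\Omega)$, or else $\tau$ is the large negative $\tau_0$ from Proposition \ref{prop_Wellposedness}, for which $\omega^2 \rho + \tau_0 < 0$ on all of $\Omega$.

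The main obstacle is therefore not the chain of reductions but precisely the non-degeneracy of the mass-weighted quadratic form $(\eta,\eta') \mapsto \int_\Omega (\omega^2 \rho + \tau) \eta \cdot \eta'$ on $\mathcal{R}$: if $\omega^2 \rho + \tau$ were allowed to change sign in a delicately balanced way, a nonzero rigid motion could slip into $W$ and the argument would collapse. Once this sign condition on the mass-density coefficient is in hand, everything else is a short unwinding of the weak form using $\hat \nabla \eta = 0$ and the orthogonality of $g$ to $\mathcal{R}$, followed by a direct appeal to the variant Korn inequality \eqref{eq_korn2_2}.
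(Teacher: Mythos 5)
Your proof is correct and follows the same strategy as the paper: reduce to the refined Korn inequality \eqref{eq_korn2_2} by exhibiting a closed subspace $W \ni u$ with $W \cap \mathcal{R} = \{0\}$, and identify that subspace by testing the weak form against rigid motions, using $\hat\nabla\eta = 0$, $\nabla\cdot\eta = 0$ and $g \perp \mathcal{R}$. The one substantive difference is the choice of $W$. The paper takes $\mathcal{W}$ to be the set of weak solutions of the boundary value problem with $F=0$ and $g\perp\mathcal{R}$, and remarks without proof that this is closed in $H^1(\Omega)^3$ — a claim that is not quite as immediate as the word ``straightforward'' suggests, since $\mathcal{W}$ is the range of the solution operator restricted to $\mathcal{R}^\perp \subset L^2(\p\Omega)^3$. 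You instead take the strictly larger subspace cut out by the six conditions $\int_\Omega(\omega^2\rho+\tau)\,v\cdot\eta\,dx=0$, $\eta\in\mathcal{R}$, whose closedness is genuinely trivial (intersection of kernels of continuous linear functionals), so your route is in this respect cleaner. Your observation about the sign of $\omega^2\rho+\tau$ is also a real point that the paper elides: the paper's proof uses $B$ rather than $B_\tau$ throughout, i.e.\ it implicitly sets $\tau=0$, which is the case used later in Lemma \ref{lem_blowupCand} and for which the sign is automatic from $\rho\in L^\infty_+(\Omega)$. So what you flag as the ``main obstacle'' is not an obstacle in the intended reading, but your analysis correctly identifies the hypothesis under which the lemma extends to general $\tau$. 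Minor remark: the quantity $k^2$ in the paper's line $B(\tilde r,\tilde r)=k^2\int_\Omega\rho\tilde r^2\,dx$ is a typographical slip for $\omega^2$, consistent with your computation.
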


\begin{proof}
According to \eqref{eq_korn2_2} it is enough to show that
$$
\mathcal{W} \cap \mathcal{R} = \{0\},
$$
where $\mathcal{W} \subset H^1(\Omega)^3$ is the closed subspace given by
$$
\mathcal{W} := \big\{ u \in H^1(\Omega)^3 \;:\; u \text{ solves \eqref{eq_bvp2} with } F=0,
g \perp \mathcal{R} \big \}. 
$$
It is straight forward to check that this is indeed a closed subspace.

To check that the condition holds, assume that $u \in \mathcal{W} \cap \mathcal{R}$.
Then $u = \tilde r$, $\tilde r \in \mathcal{R}$.
Since $u$ is a weak solution we have by \eqref{eq_weak2} that
$$
B(\tilde r,r) = (g,r)_{L^2(\p\Omega)^3}=0, \qquad \forall r\in \mathcal{R}.  
$$
Choosing $r = \tilde r$ and
since $\tilde r = Ax +c$ where $A$ is antisymmetric, we have that
$\hat \nabla \tilde r=0$ and $\nabla \cdot \tilde r = 0$, and thus
$$
B(\tilde r ,\tilde r ) = k^2\int_\Omega \rho \tilde r^2  \,dx.
$$
Combining the two previous equations and using the fact that $\rho$ is strictly positive,
gives that  $u = \tilde r = 0$, which is what we wanted to show.
\end{proof}

\subsection{The spectrum, well-posedness and a representation formula}
Next we will use Fredholm theory to describe the eigenvalues and eigenfunctions
related to the boundary value problem \eqref{eq_bvp2}. This is a mixed eigenvalue
problem sometimes called the Zaremba eigenvalue problem, see \cite{Za10}. Note that if $\Gamma_D = \emptyset$,
then we are dealing with the Neumann eigenvalue problem.
In Corollary \ref{cor_ZeroEigenvalue} we show that the boundary value problem \eqref{eq_bvp1} 
is well-posed. At the end of this subsection we will also derive a spectral representation of the expression $B(w,w)$.

The next Proposition encapsulates the main spectral theoretic facts related to the direct problem 
that are needed in the sequel.

\begin{prop} \label{prop_Neumann_spectrum}
There exists a sequence of $\sigma_k \in \R$, such that
$$
\sigma_1 \geq \sigma_2 \geq  \sigma_3 \geq \dots \to - \infty
$$
and such that the mixed eigenvalue problem
\begin{align}  \label{eq_Neumann}
\begin{cases}
\nabla \cdot (\C \, \hat \nabla \varphi_k )  + \omega^2\rho\varphi_k &= \sigma_k \varphi_k,\\
\quad\quad\quad\quad\quad\quad \gamma_\mathbb{C} \varphi_k  |_{\Gamma_N} &= 0, \\
\quad\quad\quad\quad\quad\quad\quad \varphi_k  |_{\Gamma_D} &= 0,
\end{cases}
\end{align}
has a  non-zero solution $\varphi_k \in H^1(\Omega)^3$. We have moreover that:
\begin{enumerate}	
\item The eigenvalues $\{\sigma_k\}$ are of finite multiplicity and  the eigenfunctions $\{\varphi_k\}$ 
form an orthonormal basis in $L^2(\Omega)^3$. \\
\item There are only finitely many positive eigenvalues $\{\sigma_1,..,\sigma_K \}$. \\
\item The boundary  value problem 
\begin{align}  \label{eq_bvpLsource}
\begin{cases}
\nabla \cdot (\C\,  \hat \nabla u )  + \omega^2\rho u + \tau u&= F, \\ 
\;\quad\quad\quad\quad\quad\quad\quad \gamma_\C u  |_{\Gamma_N} &= 0, \\	
\quad\quad\quad\quad\quad\quad\quad\quad u  |_{\Gamma_D} &= 0,
\end{cases}
\end{align}
admits a 
unique solution $u \in H^1(\Omega)^3$ when $-\tau \notin \{ \sigma_1,\sigma_2,\dots \}$,
and $F \in L^2(\Omega)^3$. \\
\item 
Suppose $0 \in \{ \sigma_1,\sigma_2,\dots \}$. Then there exists 
a unique solution $u \in H^1(\Omega)^3 / \mathcal{N}_0$\footnote{This means the quotient space.}
to the boundary  value problem \eqref{eq_bvpLsource} when $\tau=0$, provided that 
$$
(F,\psi)_{L^2(\Omega)^3} = 0,\qquad \forall \psi \in \mathcal{N}_0,
$$
where $\mathcal{N}_0$ is the eigenspace corresponding to zero. 
\end{enumerate}
\end{prop}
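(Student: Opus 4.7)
The plan is to convert the mixed eigenvalue problem into the spectral problem of a compact self-adjoint operator on $L^2(\Omega)^3$ and then read off (1)--(4) from the spectral theorem and the Fredholm alternative. Fix $\tau_0 \leq 0$ from Proposition \ref{prop_Wellposedness} so that the shifted problem with $g=0$ is coercively solvable, and let $S : L^2(\Omega)^3 \to \mathcal{V}$ be the associated solution operator, characterized weakly by $B_{\tau_0}(SF, v) = (F, v)_{L^2(\Omega)^3}$ for every $v \in \mathcal{V}$. The a priori estimate \eqref{eq_aprioriEst} shows $S$ maps boundedly into $H^1(\Omega)^3$, so composition with the Rellich--Kondrachov embedding makes $S$ compact on $L^2(\Omega)^3$. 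The symmetry of $B_{\tau_0}$ in its two arguments immediately gives
\[
(SF, G)_{L^2} = B_{\tau_0}(SG, SF) = B_{\tau_0}(SF, SG) = (F, SG)_{L^2},
\]
so $S$ is self-adjoint. A sharpening of the coercivity computation in the proof of Proposition \ref{prop_Wellposedness} shows $B_{\tau_0}(u,u) \leq -c \|u\|^2_{H^1(\Omega)^3}$ for $|\tau_0|$ large enough, which makes $S$ strictly negative on its nontrivial range.

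Applying the spectral theorem for compact self-adjoint operators produces an orthonormal basis $\{\varphi_k\}$ of $L^2(\Omega)^3$ with $S\varphi_k = \mu_k \varphi_k$, the $\mu_k < 0$ being of finite multiplicity and satisfying $\mu_k \to 0^-$. Inserting $F = \varphi_k$ into the defining identity of $S$ and comparing with the weak form of \eqref{eq_Neumann} identifies each $\varphi_k$ as a weak eigenfunction of the mixed problem with $\sigma_k = \mu_k^{-1} - \tau_0$; the decay $\mu_k \to 0^-$ yields $\sigma_k \to -\infty$, establishing (1). Part (2) follows because $\sigma_k > 0$ forces $|\mu_k| > |\tau_0|^{-1}$ (and is impossible when $\tau_0 = 0$), so only finitely many indices can qualify.

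For (3) and (4), rewrite \eqref{eq_bvpLsource} as the equivalent operator equation $(I + (\tau - \tau_0)S) u = SF$ on $L^2(\Omega)^3$. Expanded in the eigenbasis this decouples into scalar equations whose invertibility is equivalent to $1 + (\tau - \tau_0)\mu_k \neq 0$ for all $k$, which rearranges precisely to $-\tau \neq \sigma_k$, proving (3). Taking $\tau = 0$ and using the Fredholm alternative for the compact perturbation $I - \tau_0 S$: its kernel is spanned by those $\varphi_k$ for which $1 - \tau_0 \mu_k = 0$, i.e., $\sigma_k = 0$, which is exactly $\mathcal{N}_0$; self-adjointness converts the solvability condition $SF \perp \mathcal{N}_0$ into the stated $F \perp \mathcal{N}_0$, with uniqueness holding modulo $\mathcal{N}_0$. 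The main care required throughout is tracking the sign of $S$ and the bijection $\mu_k \leftrightarrow \sigma_k$; beyond that the argument is standard Fredholm theory for a compact self-adjoint resolvent.
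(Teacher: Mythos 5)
Your proposal is correct and follows essentially the same route as the paper's proof: both construct the compact self-adjoint resolvent $S = K_{\tau_0}$, apply the spectral theorem to get the orthonormal eigenbasis with eigenvalues $\mu_k = \tilde\sigma_k \to 0^-$, define $\sigma_k = \mu_k^{-1} - \tau_0$, and reduce parts (3)--(4) to the Fredholm alternative for $I + (\tau-\tau_0)S$. The only cosmetic differences are that you argue part (2) via the explicit bound $|\mu_k| > |\tau_0|^{-1}$ while the paper infers it from $\sigma_k \to -\infty$, and that you make the negative-definiteness of $S$ explicit up front rather than deriving the sign of the $\tilde\sigma_k$ in-line.
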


\begin{proof} 
Pick $g=0$ in \eqref{eq_bvp2}, and let $\tau_0$ be as in 
Proposition \ref{prop_Wellposedness}.
Then Proposition \ref{prop_Wellposedness} gives the right inverse operator
$$
 K_{\tau_0} := (L_{\lambda,\mu,\rho} + \tau_0)^{-1} : L^2(\Omega)^3 \to H^1(\Omega)^3,
$$
that maps a source term $F$ to the corresponding weak solution.
Note that this is a compact operator on $L^2(\Omega)^3$, since the inclusion 
$H^1(\Omega)^3 \subset L^2(\Omega)^3$ is compact.

$K_{\tau_0}$ is also self-adjoint, since $K_{\tau_0}$ is bounded and since
\begin{align*} 
(K_{\tau_0}F,G)_{L^2(\Omega)^3} = B_{\tau_0}(K_{\tau_0}F,K_{\tau_0}G) =  B_{\tau_0}(K_{\tau_0}G,K_{\tau_0}F) = (F,K_{\tau_0}G)_{L^2(\Omega)^3}. 
\end{align*}
The spectral Theorem for compact self-adjoint operators, see \cite{RSI} Theorem VI.16, shows
that the spectrum of $K_{\tau_0}$ consists of eigenvalues $\tilde \sigma_k \in \R$, such that
\begin{align}  \label{eq_neg}
K_{\tau_0} \tilde \varphi_k = \tilde \sigma_k \tilde \varphi_k, \qquad \tilde \sigma_k \to 0, \text{ as } k \to \infty,
\end{align}
and  that the corresponding eigenfunctions $\tilde \varphi_k$ form an orthonormal basis of $L^2(\Omega)^3$.
Notice also that all the eigenvalues are negative. Using the fact that $K_{\tau_0}$ is the right inverse and orthonormality,
we have that 
\begin{small}
\begin{align}  \label{eq_sign}
C\| \tilde \varphi_k \|^2_{H^1(\Omega)^3}
\leq -B_{\tau_0}(\tilde \varphi_k, \tilde \varphi_k) 
=\leq -\tfrac{1}{\tilde \sigma_k} B_{\tau_0}(K_{\tau_0}\tilde \varphi_k, \tilde \varphi_k)  
= -\tfrac{1}{\tilde \sigma_k} (\tilde \varphi_k,\,\tilde \varphi_k)_{L^2(\Omega)^3} = -\tfrac{1}{\tilde \sigma_k}.
\end{align}
\end{small}
To obtain a solution to the eigenvalue problem \eqref{eq_Neumann}, notice that
$$
\tilde \sigma_k B_{\tau_0}(\tilde \varphi_k, w) = B_{\tau_0}(K_{\tau_0} \tilde \varphi_k, w) = (\tilde \varphi_k,\,w)_{L^2(\Omega)^3}.
$$
So if we define $\sigma_k := \tilde \sigma_k^{-1} - \tau_0$ and $\varphi_k := \tilde \varphi_k$, then we have that
$$
B(\varphi_k,w) = B_{\tau_0}(\tilde \varphi_k, w) - \tau_0 (\tilde \varphi_k,w)_{L^2(\Omega)^3}
= \sigma_k (\varphi_k,w)_{L^2(\Omega)^3}.
$$
Thus $\varphi_k$ and $\sigma_k$ solve the weak form of the eigenvalue problem
and by \eqref{eq_sign} we see that the eigenvalues converge to $-\infty$. This proves the first
part of the claim.
Part $(1)$ follows from the fact that $\tilde \varphi_k$ 
is an orthonormal basis in $L^2(\Omega)^3$. The second part $(2)$ follows now from 
\eqref{eq_neg} and \eqref{eq_sign}. 

\medskip
\noindent
We will use the Fredholm alternative to prove claim $(3)$. 
Let $F \in L^2(\Omega)^3$, $F\neq0$. 
The existence of a weak solution $u$ to \eqref{eq_Neumann} means that
\begin{align}  \label{eq_weakB}
B_\tau(u,v) = (F,v)_{L^2(\Omega)^3}, \text{ for all } v \in \mathcal{V}. 
\end{align}
Since $B_{\tau_0}(u,v) = (F+(\tau_0-\tau) u ,v)_{L^2(\Omega)^3} $ if and only if $ u = K_{\tau_0}(F + (\tau_0-\tau) u)$,
we have that
\begin{align} \label{eq_intEq}
u - (\tau_0-\tau) K_{\tau_0} u = K_{\tau_0} F.
\end{align}
By the Fredholm alternative for a compact operator $T$ on $L^2(\Omega^3)$ we know that 
either
\begin{align} \label{eq_nonHomogeneous}
\exists ! u \in L^2(\Omega)^3 , \quad   u - T u = h, \quad \forall h \in L^2(\Omega)^3, 
\end{align}
or the homogeneous equation has  a non trivial solution, i.e.
\begin{align} \label{eq_Homogeneous}
\exists u \in L^2(\Omega)^3, u \neq 0,\quad u - T u = 0.
\end{align}
See p. 203 in \cite{RSI}.
Thus by the dichotomy between \eqref{eq_nonHomogeneous} and  \eqref{eq_Homogeneous}
applied to $T = (\tau_0-\tau)K_{\tau_0}$,
we see that \eqref{eq_intEq} has a unique solution  if and only if 
$$
1 \notin \spec((\tau_0 - \tau)K_{\tau_0}) \quad\Leftrightarrow\quad 
\frac{1}{\tau_0 -\tau}  \neq \tilde \sigma_k
\quad\Leftrightarrow\quad -\tau \neq \frac{1}{\tilde \sigma_k} - \tau_0 = \sigma_k. 
$$
It thus follows that \eqref{eq_weakB} has a unique weak solution $u$ 
if and only if $-\tau \notin \{\sigma_1,\sigma_2,\sigma_3,..\}$.

\medskip
The last step is to prove (4). We will again use Fredholm theory. Note firstly that now $\tau_0 \neq 0$, 
since $0$ is an eigenvalue of the boundary value problem.
Since we assume that $\sigma_k = 0$, for some $k$, we have that $\tilde \sigma_k = \tau_0^{-1}$
is an eigennvalue of $K_{\tau_0}$. 
And thus there is a non trivial $w\neq 0$, solving
\begin{align} \label{eq_intEq_2}
w - \tau_0 K_{\tau_0} w = 0. 
\end{align}
The Fredholm alternative of Theorem 2.27 in \cite{Mc00} p.37--38, implies that 
the eigenspace of $K_{\tau_0}$ corresponding to $\tilde \sigma_k = \tau_0^{-1}$, which we denote by
$\mathcal{M}_k$ and to which $w$ belongs, is such that
$$
\dim\big(\mathcal{M}_k\big) < \infty.
$$
Since $K_{\tau_0}$ is self-adjoint, Theorem 2.27 in \cite{Mc00} gives furthermore that 
\begin{align}  \label{eq_Kh}
u - \tau_0 K_{\tau_0} u =  h 
\end{align}
is solvable if and only if
$$
(h,\psi)_{L^2(\Omega)^3} = 0, \qquad \forall \psi \in \mathcal{M}_k.
$$
Now since $K_{\tau_0}$ is self-adjoint, we have for $\psi \in \mathcal{M}_k$  that
\begin{align*} 
(F,\psi)_{L^2(\Omega)^3} = 0
\quad\Leftrightarrow\quad
(K_{\tau_0}F,\psi)_{L^2(\Omega)^3} = 0,
\end{align*}
so that \eqref{eq_Kh} is solvable for $h = K_{\tau_0}F$, if $F \perp \mathcal{M}_k$.
It follows that there exists a weak solution $u$ to
\begin{align}  \label{eq_weakB_2}
B_0(u,v) = (F,v)_{L^2(\Omega)^3}, \text{ for all } v \in \mathcal{V}, 
\end{align}
when $F \perp \mathcal{M}_k$. It is easy to check that this solution is
unique in $H^1(\Omega)^3 / \mathcal{M}_k$. By definition we also have 
$\mathcal{M}_k = \mathcal{N}_0$ where $\mathcal{N}_0$ denotes the eigenspace
of zero for the problem \eqref{eq_Neumann}.
\end{proof}

\noindent As a corollary to the previous Lemma we get the well-posedness 
of the boundary value problem of \eqref{eq_bvp1}, assuming that $\omega$ is not
a resonance frequency.

\begin{cor} \label{cor_ZeroEigenvalue}
Let $\{\sigma_1,\sigma_2,..\}$ be the eigenvalues given by Proposition \ref{prop_Neumann_spectrum}.
For the boundary value problem
\begin{align}  \label{eq_bvp_inh}
\begin{cases}
\nabla \cdot (\C\,  \hat \nabla u )  + \omega^2\rho u &= 0, \\
\;\quad\quad\quad\quad\quad \gamma_\C u  |_{\Gamma_N} &= g, \\	
\;\quad\quad\quad\quad\quad \quad u  |_{\Gamma_D} &= 0, 
\end{cases}
\end{align}
with $g \in L^2(\Gamma_N)^3$, the following holds:
\begin{enumerate}
\item The problem \eqref{eq_bvp_inh} admits a unique solution if $0 \notin \{ \sigma_1,\sigma_2,..\}$.
\item If zero is an eigenvalue, i.e. $0 \in \{ \sigma_1,\sigma_2,..\}$,
then \eqref{eq_bvp_inh} admits a unique solution $u \in H^1(\Omega)^3 \,/\mathcal{N}_0$ for all $g\in L^2(\Gamma_N)^3$
that satisfy
$$
(g,\psi)_{L^2(\Gamma_N)^ 3} = 0,\quad \forall \psi \in \mathcal{N}_0,
$$
where $\mathcal{N}_0$ is the eigenspace of zero. 
\end{enumerate}
\end{cor}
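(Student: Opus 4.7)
The strategy is to reduce the inhomogeneous Neumann problem \eqref{eq_bvp_inh} to the pure source problem \eqref{eq_bvpLsource}, after which Proposition \ref{prop_Neumann_spectrum} (3)--(4) can be invoked directly. Fix the $\tau_0 \leq 0$ supplied by Proposition \ref{prop_Wellposedness}. That proposition yields a unique $u_g \in \mathcal{V}$ solving \eqref{eq_bvp2} with $F = 0$ and this $\tau_0$, i.e.
$$
B_{\tau_0}(u_g, v) = -(g, v|_{\p\Omega})_{L^2(\Gamma_N)^3}, \qquad \forall v \in \mathcal{V}.
$$
Writing the sought solution as $u = u_g + w$ and using $B_0 = B_{\tau_0} - \tau_0(\cdot,\cdot)_{L^2(\Omega)^3}$, the weak formulation of \eqref{eq_bvp_inh} for $u$ is equivalent to
$$
B_0(w, v) = \tau_0 (u_g, v)_{L^2(\Omega)^3}, \qquad \forall v \in \mathcal{V},
$$
which is precisely the weak form of \eqref{eq_bvpLsource} with $\tau = 0$ and source $F := \tau_0 u_g \in L^2(\Omega)^3$.

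For part (1), when $0 \notin \{\sigma_1, \sigma_2, \dots\}$, Proposition \ref{prop_Neumann_spectrum}(3) furnishes a unique such $w \in \mathcal{V}$, so $u = u_g + w$ is a weak solution. Uniqueness follows by applying the same part (3) with $F = 0$ to the difference of any two solutions.

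For part (2), when $0 \in \{\sigma_1, \sigma_2, \dots\}$ (so in particular $\tau_0 \neq 0$), Proposition \ref{prop_Neumann_spectrum}(4) applies provided the compatibility condition $F = \tau_0 u_g \perp \mathcal{N}_0$ holds in $L^2(\Omega)^3$. This is the only subtle step, and it is where the hypothesis $g \perp \mathcal{N}_0$ in $L^2(\Gamma_N)^3$ enters. Given any $\psi \in \mathcal{N}_0$, I would test the equation for $u_g$ with $v = \psi$ (legitimate since $\psi \in \mathcal{V}$) to obtain
$$
B(u_g, \psi) + \tau_0(u_g, \psi)_{L^2(\Omega)^3} = -(g, \psi)_{L^2(\Gamma_N)^3}.
$$
By the manifest symmetry of $B$ and by $\psi$ being an eigenfunction with eigenvalue $0$, one has $B(u_g, \psi) = B(\psi, u_g) = 0 \cdot (\psi, u_g)_{L^2(\Omega)^3} = 0$. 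Hence $\tau_0 (u_g, \psi)_{L^2(\Omega)^3} = -(g, \psi)_{L^2(\Gamma_N)^3}$, and since $\tau_0 \neq 0$ the orthogonality of $g$ to $\mathcal{N}_0$ transfers to orthogonality of $u_g$, and a fortiori of $F$, to $\mathcal{N}_0$. Proposition \ref{prop_Neumann_spectrum}(4) then supplies a unique $w \in H^1(\Omega)^3 / \mathcal{N}_0$, and $u = u_g + w$ is the desired solution class.

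The main obstacle is the compatibility translation just described: the Fredholm-type solvability condition produced by the source-term reduction is phrased in terms of $F = \tau_0 u_g$ in the interior, and one must confirm it agrees exactly with the boundary orthogonality stated in the corollary. The symmetry of $B$ together with the eigenfunction equation handles this cleanly; everything else is routine application of the preceding proposition.
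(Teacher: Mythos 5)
Your proof is correct and follows essentially the same route as the paper: both reduce the inhomogeneous Neumann problem to a pure source problem via lifting with the $\tau_0$-shifted solution operator, invoke parts (3)--(4) of Proposition~\ref{prop_Neumann_spectrum}, and translate the Fredholm compatibility condition $F\perp\mathcal{N}_0$ into $g\perp\mathcal{N}_0$ by testing the weak formulation for $u_g$ with an eigenfunction $\psi$ and using the symmetry of $B$ together with $B(\psi,\cdot)=0$. The only cosmetic difference is that you write the final solution as $u=u_g+w$ while the paper writes $u=w-E_{\tau_0}g$ (which appears to be a sign typo on the paper's side; your sign is the consistent one).
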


\begin{proof}
Using Proposition \ref{prop_Wellposedness} we can define an inverse trace operator
$E_{\tau_0}:L^2(\Gamma_N)^3 \to H^1(\Omega)^3$ as
$$
E_{\tau_0}g := v,
$$
where $v$ is the unique solution to \eqref{eq_bvp2}, with $F=0$,
and where $\tau_0$ is as in Proposition \ref{prop_Wellposedness}.

Let us prove part (2) of the claim. The proof of (1) follows similar lines.
We can  prove (2) by reducing  it to finding a solution $w$ to the inhomogeneous case 
\begin{align}  \label{eq_bvp_inh_2}
\begin{cases}
L_{\lambda,\mu,\rho} w &= \widetilde F, \\
\quad \gamma_\C w  |_{\Gamma_N} &= 0, \\	
\quad \quad w  |_{\Gamma_D} &= 0, 
\end{cases}
\end{align}
and applying Proposition \ref{prop_Neumann_spectrum}.
To solve \eqref{eq_bvp_inh}
it is sufficient  to solve \eqref{eq_bvp_inh_2} with 
$$
\widetilde F:=-L_{\lambda,\mu,\rho}E_{\tau_0}g = \tau_0 v,
$$
where the last equality holds, since $E_{\tau_0}g=v$ solves by definition $(L_{_{\lambda,\mu,\rho}} + \tau_0) E_{\tau_0}g = 0$.
Note firstly that since $v \in H^1(\Omega)^3$, also $\widetilde F \in H^1(\Omega)^3$.
By Proposition \ref{prop_Neumann_spectrum} part (4) we know that \eqref{eq_bvp_inh_2} admits a unique solution
in $H^1(\Omega)^3/\mathcal{N}_0$.  

To complete the proof, we need to check that we can 
rewrite the condition appearing in part (4) of Proposition \ref{prop_Neumann_spectrum}, in the form appearing in (2) of the claim. 
To this end note that since a $\psi \in \mathcal{N}_0$  is a weak solution to the  
mixed eigenvalue problem for zero,
and since $v$ is a solution to \eqref{eq_bvp2} with $g$ as Neumann data and $\tau = \tau_0$, we have that
\begin{small}
\begin{align*} 
0 = (\widetilde F , \psi)_{L^2(\Omega)^3} =
\int_\Omega \tau_0 v \cdot \psi \,dx = \int_{\Omega} -\C \hat \nabla \psi : \hat \nabla v  + \omega^2 \rho\psi \cdot v \,dx + \int_\Omega \tau_0 v\cdot  \psi \,dx
= -\int_{\Gamma_N} g \cdot \psi \,dS.
\end{align*}
\end{small}
\noindent
We thus have the condition
\begin{align} \label{eq_f_ortho}
0 =  (g,\psi)_{L^2(\Gamma_N)^ 3}.
\end{align}
And hence we see that \eqref{eq_bvp_inh_2} is solvable when this condition is satisfied.
The solution to \eqref{eq_bvp_inh} is then given by
$$
u = w - E_{\tau_0} g.
$$
We thus see that $u$ satisfies \eqref{eq_bvp_inh} when $g$ satisfies \eqref{eq_f_ortho},
and consequently that (2) holds.
\end{proof}

\noindent
We now turn to the final matter in this subsection.
The eigenvalues $\{ \sigma_k\}$ and eigenfunctions $\{ \varphi_k\}$ allow us to 
'diagonalize' the differential operator $L_{\lambda,\mu,\rho}$ 
in \eqref{eq_bvp1}. This gives us the  identity 
$$
(L_{\lambda,\mu,\rho}w,w)_{L^2(\Omega)^3} = \sum_k \sigma_k c_k^2, \quad \text{ where }  \quad w = \sum_k c_k \varphi_k,
$$
for any  $w \in \mathcal{V}$. 
In the rest of this subsection we will derive some results that gives us a weak version of this 
that holds when $\lambda,\mu \in L_+^\infty(\Omega)$. 

\medskip
\noindent By the coercivity and continuity of the bilinear form $B_{\tau_0}$, for suitable ${\tau_0} \leq 0$,
which are guaranteed to exist by Proposition \ref{prop_Wellposedness}, we have 
that
$$
(u,v)_{B_{\tau_0}} := -B_{\tau_0}(u,v), \qquad u,v \in \mathcal{V}, 
$$
gives an inner product on $\mathcal{V}$, and a corresponding norm $\| u \|^2_{ B_{\tau_0} } := (u,u)_{B_{\tau_0}}$. 
This allows us to extend the result of Proposition \ref{prop_Neumann_spectrum}.

\begin{cor} \label{cor_onorm}
Suppose $\varphi_k$ and $\sigma_k$ are the eigenfunctions given by Proposition \ref{prop_Neumann_spectrum}
and let ${\tau_0} \leq 0$ be as in Proposition \ref{prop_Wellposedness}.
Then we have that
$$
\psi_k := \frac{\varphi_k}{[-(\sigma_k + {\tau_0})]^{1/2}} , \qquad k=1,2,..
$$
form an orthonormal basis of $\mathcal{V}$ in the inner product $(\cdot, \cdot)_{B_{\tau_0}}$ and corresponding norm.	
\end{cor}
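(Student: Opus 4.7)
The plan is to show orthonormality by a direct computation using the eigenvalue relation and $L^2$-orthonormality of the $\varphi_k$, and then show completeness by reducing the orthogonal complement condition in $(\cdot,\cdot)_{B_{\tau_0}}$ to the orthogonal complement condition in $L^2(\Omega)^3$, where it is already known that $\{\varphi_k\}$ is total.

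First, I would observe that $B$ is symmetric in its two arguments (from its explicit form in \eqref{eq_weak}), hence so is $B_{\tau_0}$, so $(\cdot,\cdot)_{B_{\tau_0}}$ is a bona fide symmetric bilinear form and indeed defines an inner product on $\mathcal{V}$ by the coercivity/continuity established in Proposition \ref{prop_Wellposedness}; in particular its norm is equivalent to $\|\cdot\|_{H^1(\Omega)^3}$ on $\mathcal{V}$, so $\mathcal{V}$ is a Hilbert space under $(\cdot,\cdot)_{B_{\tau_0}}$.

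Next, I would use the weak eigenvalue relation $B(\varphi_k,v)=\sigma_k(\varphi_k,v)_{L^2(\Omega)^3}$ derived in the proof of Proposition \ref{prop_Neumann_spectrum} together with symmetry to get
\begin{align*}
B_{\tau_0}(\varphi_j,\varphi_k)=B(\varphi_j,\varphi_k)+\tau_0(\varphi_j,\varphi_k)_{L^2(\Omega)^3}
=(\sigma_k+\tau_0)(\varphi_j,\varphi_k)_{L^2(\Omega)^3}=(\sigma_k+\tau_0)\delta_{jk},
\end{align*}
since $\{\varphi_k\}$ is orthonormal in $L^2(\Omega)^3$. Because $\tilde\sigma_k=(\sigma_k+\tau_0)^{-1}<0$ by \eqref{eq_sign}, we have $-(\sigma_k+\tau_0)>0$, so the normalization $\psi_k=\varphi_k/[-(\sigma_k+\tau_0)]^{1/2}$ makes sense, and the previous display gives $(\psi_j,\psi_k)_{B_{\tau_0}}=\delta_{jk}$.

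It remains to show completeness, and this is the only slightly nontrivial step. Suppose $w\in\mathcal{V}$ satisfies $(w,\psi_k)_{B_{\tau_0}}=0$ for every $k$. Using symmetry of $B_{\tau_0}$ and the eigenvalue relation applied with $v=w$, we obtain
\begin{align*}
0=(w,\psi_k)_{B_{\tau_0}}=-\frac{1}{[-(\sigma_k+\tau_0)]^{1/2}}\,B_{\tau_0}(\varphi_k,w)=-\frac{\sigma_k+\tau_0}{[-(\sigma_k+\tau_0)]^{1/2}}(\varphi_k,w)_{L^2(\Omega)^3},
\end{align*}
which forces $(\varphi_k,w)_{L^2(\Omega)^3}=0$ for all $k$, since $\sigma_k+\tau_0\ne 0$. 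Part (1) of Proposition \ref{prop_Neumann_spectrum} says $\{\varphi_k\}$ is an $L^2$-orthonormal basis, so $w=0$ in $L^2(\Omega)^3$ and therefore in $\mathcal{V}$. Thus $\{\psi_k\}^\perp=\{0\}$ in the Hilbert space $(\mathcal{V},(\cdot,\cdot)_{B_{\tau_0}})$, giving the basis property. The only delicate point is making sure that the $B_{\tau_0}$-inner product is a genuine Hilbert-space structure on $\mathcal{V}$ and that one is allowed to transfer totality from $L^2(\Omega)^3$ to $\mathcal{V}$; both follow immediately from the coercivity/continuity of $B_{\tau_0}$ on $\mathcal{V}$ already established.
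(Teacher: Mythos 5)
Your proof is correct and takes essentially the same approach as the paper: you verify orthonormality by the identity $B_{\tau_0}(\varphi_j,\varphi_k)=(\sigma_k+\tau_0)\delta_{jk}$ (equivalently that $-(\sigma_k+\tau_0)>0$ makes the normalization legitimate), and you establish completeness by showing that $B_{\tau_0}$-orthogonality to all $\psi_k$ implies $L^2$-orthogonality to all $\varphi_k$, which are already known to be an $L^2$-orthonormal basis. Your additional remarks that $B_{\tau_0}$ is symmetric and that $(\cdot,\cdot)_{B_{\tau_0}}$ makes $\mathcal{V}$ a Hilbert space (with norm equivalent to $\|\cdot\|_{H^1}$) are facts the paper states just before the corollary and uses implicitly, so you are merely making the underlying scaffolding explicit.
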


\begin{proof} To prove that the set $\{\psi_k \}$ is orthonormal, notice that
\begin{align*} 
(\psi_k,\psi_j)_{B_{\tau_0}} = -B(\psi_k,\psi_j) - {\tau_0}(\psi_k,\psi_j)_{L^2(\Omega)^3} 
=
\frac{-B(\varphi_k,\varphi_j) - {\tau_0}(\varphi_k,\varphi_j)_{L^2(\Omega)^3}}{[-(\sigma_k + {\tau_0})]^{1/2}[-(\sigma_j + {\tau_0})]^{1/2}} = 
\delta_{jk},
\end{align*}
where we used the weak definition of the eigenvalue problem and the orthogonality of the functions $\varphi_k$.

It remains to check that the set is complete. Suppose that $u \in \mathcal{V}$. Completeness follows if we can show that
the condition 
$(u,\psi_k)_{B_{\tau_0}} = 0$, for every $k$, implies that $u = 0$. To this end we note that if
$$
0 = (u,\psi_k)_{B_{\tau_0}} =  \frac{(u,\varphi_k)_{B_{\tau_0}}}{[-(\sigma_k + {\tau_0})]^{1/2}} 
$$
for every $k$, then
$$
0 = (u,\varphi_k)_{B_{\tau_0}} = -B(u,\varphi_k) - {\tau_0}(u,\varphi_k)_{L^2(\Omega)^3}
= -(\sigma_k + {\tau_0})(u,\varphi_k)_{L^2(\Omega)^3}.
$$
We know that $(\sigma_k + {\tau_0}) < 0$, since ${\tau_0}$ makes $B_{\tau_0}$ coercive, so that
$(u,\varphi_k)_{L^2(\Omega)^3}=0$, for every $k$.
The functions  $\varphi_k$ are an orthonormal basis, and thus $u=0$.
\end{proof}

\begin{lem} \label{lem_diagonaliztion}
Suppose that $w \in \mathcal{V}$, then
$$
B(w,w) = \sum_k \sigma_k c_k^2,
$$
where $w=\sum_k c_k \varphi_k$ is the representation via the orthonormal basis $\lbrace\varphi_k\rbrace$ by Proposition \ref{prop_Neumann_spectrum}.
\end{lem}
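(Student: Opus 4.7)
The plan is to exploit both orthonormal bases simultaneously: the $L^2$-orthonormal basis $\{\varphi_k\}$ from Proposition \ref{prop_Neumann_spectrum}, and the $B_{\tau_0}$-orthonormal basis $\{\psi_k\}$ from Corollary \ref{cor_onorm}, where $\psi_k = \varphi_k/[-(\sigma_k+\tau_0)]^{1/2}$. Since $w\in\mathcal{V}\subset L^2(\Omega)^3$, I would expand $w$ in both inner products, namely $w = \sum_k c_k \varphi_k$ in $L^2(\Omega)^3$ with $c_k = (w,\varphi_k)_{L^2(\Omega)^3}$, and $w = \sum_k d_k \psi_k$ in $(\mathcal V,(\cdot,\cdot)_{B_{\tau_0}})$ with $d_k = (w,\psi_k)_{B_{\tau_0}}$.

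The key is to relate the two sets of coefficients via the weak form of the eigenvalue problem. From Proposition \ref{prop_Neumann_spectrum}, the eigenfunctions satisfy $B(\varphi_k,v) = \sigma_k(\varphi_k,v)_{L^2(\Omega)^3}$ for every $v\in\mathcal{V}$. Hence $B_{\tau_0}(\varphi_k,v) = (\sigma_k+\tau_0)(\varphi_k,v)_{L^2(\Omega)^3}$, and taking $v=w$ gives
\begin{equation*}
d_k = (w,\psi_k)_{B_{\tau_0}} = \frac{-B_{\tau_0}(\varphi_k,w)}{[-(\sigma_k+\tau_0)]^{1/2}} = [-(\sigma_k+\tau_0)]^{1/2}\,c_k.
\end{equation*}

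Next, applying Parseval in each of the two Hilbert space structures yields
\begin{equation*}
\|w\|^2_{L^2(\Omega)^3} = \sum_k c_k^2, \qquad
\|w\|^2_{B_{\tau_0}} = \sum_k d_k^2 = -\sum_k (\sigma_k+\tau_0)\,c_k^2.
\end{equation*}
Using the definition $(\cdot,\cdot)_{B_{\tau_0}} = -B_{\tau_0}(\cdot,\cdot)$ and the splitting $B(w,w) = B_{\tau_0}(w,w) - \tau_0\|w\|^2_{L^2(\Omega)^3}$, I combine the two Parseval identities:
\begin{equation*}
B(w,w) = -\|w\|^2_{B_{\tau_0}} - \tau_0\|w\|^2_{L^2(\Omega)^3} = \sum_k (\sigma_k+\tau_0) c_k^2 - \tau_0\sum_k c_k^2 = \sum_k \sigma_k c_k^2,
\end{equation*}
which is the desired identity.

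The only subtle point is the convergence of the series in the right topology, but this is already absorbed: the $L^2$ expansion converges in $L^2$, and the $B_{\tau_0}$ expansion converges in $\mathcal{V}$ by Corollary \ref{cor_onorm}, so each Parseval identity is unconditionally valid; the proportionality $d_k = [-(\sigma_k+\tau_0)]^{1/2} c_k$ is just a coefficient-wise statement and needs no further justification. The argument does require $\tau_0\le 0$ with $\sigma_k+\tau_0<0$ for all $k$, which is guaranteed by the choice of $\tau_0$ in Proposition \ref{prop_Wellposedness}.
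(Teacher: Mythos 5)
Your proof is correct and follows essentially the same route as the paper: both expand $w$ in the $B_{\tau_0}$-orthonormal basis $\{\psi_k\}$ of Corollary \ref{cor_onorm}, relate the coefficients to the $L^2$-coefficients $c_k$ via the weak eigenvalue equation to get $d_k=[-(\sigma_k+\tau_0)]^{1/2}c_k$, and then combine the two Parseval identities with the splitting $B=B_{\tau_0}-\tau_0(\cdot,\cdot)_{L^2}$. The only cosmetic difference is that you state the $L^2$-Parseval identity explicitly where the paper uses it tacitly in the final line.
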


\begin{proof}
According to Corollary \ref{cor_onorm} we have the representation 
$$
w=\sum_k c'_k \psi_k, \quad \psi_k := \frac{\varphi_k}{[-({\tau_0}+\sigma_k)]^{1/2}} , \quad c'_k := (w,\psi_k)_{B_{\tau_0}},
$$
where $(u,v)_{B_{\tau_0}} = -B_{\tau_0}(u,v)$ and where the series converges in the corresponding  $\|\cdot\|_{B_{\tau_0}}$-norm.
We now have that
\begin{align*} 
-B_{\tau_0}(w,w) 
=
-B_{{\tau_0}}\Big(\sum_k c'_k \psi_k ,\, \sum_j c'_j \psi_j\Big)
= \sum_k |c'_k|^2.
\end{align*}
On the other hand we have that
$$
-B_{\tau_0}(w,w) = -B(w,w) - {\tau_0} (w,w)_{L^2(\Omega)^3},
$$
so that 
$$
B(w,w) = -\sum_k |c'_k|^2 - {\tau_0} (w,w)_{L^2(\Omega)^3}.
$$
Let us compute the coefficients $c'_k$ in terms of $c_k$. We have that
\begin{align*} 
c'_k = (w,\psi_k)_{B_{\tau_0}} &= -B(w,\psi_k) - {\tau_0} (w,\psi_k)_{L^2(\Omega)^3} 
=\frac{-\sigma_k c_k - {\tau_0} c_k}{[-({\tau_0}+\sigma_k)]^{1/2}} 
= [-(\sigma_k + {\tau_0})]^{1/2} c_k.
\end{align*}
From the earlier equality we get that 
$$
B(w,w) = -\sum_k [-(\sigma_k + {\tau_0})] c^2_k  - {\tau_0} \sum_k c^2_k
= \sum_k \sigma_k c^2_k,
$$
which is what we wanted to prove.
\end{proof}

\section{Monotonicity inequalities} \label{sec_mono_ineq}

\noindent
In this section we derive some monotonicity relations that are of fundamental 
importance in justifying monotonicity based shape reconstruction, and will be needed
in the later sections. The use of this type of inequalities goes back to 
\cite{KSS97} and \cite{Ik98}.

It will be convenient to use the quantity  $d(\lambda,\mu,\rho)$, which we define  as
\begin{align}  \label{eq_def_d}
d(\lambda,\mu,\rho) := \text{ the number of $\sigma_k > 0$ in problem \eqref{eq_Neumann} counted with multiplicity.}
\end{align}
The following Lemma is the main monotonicity inequality we will be using. 

\begin{lem} \label{lem_monotonicity_ineq1}
Let $\mu_j,\lambda_j,\rho_j \in L^\infty_+(\Omega)$, for $j=1,2$ and $ \omega \neq 0$.
Let $u_j$ denote the solution to \eqref{eq_bvp1} where $\mu=\mu_j, \lambda= \lambda_j$ and $ \rho=\rho_j$,
with the boundary value  $g$. There exists a finite dimensional subspace $V\subset L^2( \Gamma_N)^3$, 
such that
\begin{align*}  
\big(  (\Lambda_2 - \Lambda_1)g, \,g  \big )_{L^2(\Gamma_N)^3} 
\geq
\int_\Omega 
2(\mu_1-\mu_2 ) |\hat \nabla u_1 |^2 + (\lambda_1 - \lambda_2 ) |\nabla \cdot u_1|^2  + \omega^2(\rho_2-\rho_1) |u_1|^2 \,dx, 
\end{align*}
when $g \in V^\perp$. We have moreover that $\dim(V) \leq d(\lambda_2,\mu_2,\rho_2)$.
\end{lem}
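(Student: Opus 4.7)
My plan is to combine a polarization identity for the bilinear forms with the spectral representation from Lemma \ref{lem_diagonaliztion}, where the finite-dimensional subspace $V$ will be designed to kill the components of $u_2-u_1$ along the finitely many positive eigenfunctions of system 2.

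First, using \eqref{eq_NDmap} for both parameter sets, $B_j(u_j,u_j) = -(g, \Lambda_j g)_{L^2(\Gamma_N)^3}$ for $j=1,2$, so
\[
((\Lambda_2 - \Lambda_1)g, g)_{L^2(\Gamma_N)^3} = B_1(u_1,u_1) - B_2(u_2,u_2).
\]
Since $u_1 \in \mathcal{V}$, it is an admissible test function in the weak formulation for system 2, giving $B_2(u_2, u_1) = -\int_{\Gamma_N} g \cdot u_1 \, dS = B_1(u_1, u_1)$. Expanding $B_2(u_2-u_1, u_2-u_1)$ by bilinearity, using symmetry of $B_2$, and substituting the above identity yields
\[
B_1(u_1,u_1) - B_2(u_2,u_2) = [B_2(u_1,u_1) - B_1(u_1,u_1)] - B_2(u_2-u_1,\, u_2-u_1).
\]
A direct computation from the definition of the bilinear form shows that the bracketed term equals precisely the integral on the right of the claim, so it suffices to produce a $V$ for which $B_2(u_2-u_1, u_2-u_1) \leq 0$ whenever $g \in V^\perp$.

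To construct $V$, let $K := d(\lambda_2,\mu_2,\rho_2)$ and let $\varphi_1, \dots, \varphi_K$ be the eigenfunctions of the mixed eigenvalue problem \eqref{eq_Neumann} for $(\lambda_2,\mu_2,\rho_2)$ corresponding to the positive eigenvalues, which exist and are finitely many by Proposition \ref{prop_Neumann_spectrum}(2). Define the bounded linear map $S : L^2(\Gamma_N)^3 \to \mathbb{R}^K$ by
\[
Sg := \big( (u_2(g) - u_1(g),\, \varphi_k)_{L^2(\Omega)^3} \big)_{k=1}^K,
\]
which is continuous since $g \mapsto u_j(g)$ is bounded from $L^2(\Gamma_N)^3$ into $H^1(\Omega)^3$ by Corollary \ref{cor_ZeroEigenvalue} and Proposition \ref{prop_Wellposedness}. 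Set $V := (\ker S)^\perp$; this is finite-dimensional with $\dim V \leq K = d(\lambda_2,\mu_2,\rho_2)$, since $V = \overline{\mathrm{Ran}(S^*)}$ and $S^*$ has finite rank at most $K$. For $g \in V^\perp = \ker S$, the coefficients $c_k := (u_2-u_1,\varphi_k)_{L^2(\Omega)^3}$ vanish for $k=1,\dots,K$, and Lemma \ref{lem_diagonaliztion} applied to $w = u_2-u_1 \in \mathcal{V}$ gives
\[
B_2(u_2-u_1,\, u_2-u_1) = \sum_k \sigma_k c_k^2 = \sum_{k > K} \sigma_k c_k^2 \leq 0,
\]
completing the proof.

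I expect the main obstacle to be this last step. In the stationary case $\omega=0$ the form $-B_2$ is positive definite and the sign $B_2(u_2-u_1,u_2-u_1) \leq 0$ is automatic, but the time-harmonic term $\omega^2 \rho\, u\cdot v$ makes $B_2$ indefinite with finitely many positive eigenvalues. The construction above shows these troublesome modes can be suppressed by imposing a codimension-$K$ linear condition on the boundary data $g$, which is the origin of the finite-dimensional subspace $V$ in the statement and of the sharp bound $\dim V \leq d(\lambda_2,\mu_2,\rho_2)$.
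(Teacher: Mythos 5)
Your proof is correct and follows essentially the same route as the paper: the same algebraic rearrangement via $B_1(u_1,u_1)=B_2(u_2,u_1)$, the same reduction to $B_2(w,w)\leq 0$, and the same construction of $V$ as the image of $\spn\{\varphi_1,\dots,\varphi_K\}$ under the adjoint of the difference of solution operators (your $V=\Ran(S^*)$ coincides with the paper's $(S_2-S_1)^*W$). Your phrasing via the kernel and adjoint of the map $g\mapsto(c_1,\dots,c_K)$ is a modest clarification of the same idea, and you correctly obtain $\dim V\leq K=d(\lambda_2,\mu_2,\rho_2)$.
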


\begin{proof}
Let $B_j$ and $\Lambda_j$ be the bilinear form and Neumann-to-Dirichlet map given in \eqref{eq_weak}
and \eqref{eq_NDmap}, with $\mu=\mu_j, \lambda= \lambda_j$ and $ \rho=\rho_j$.
Notice firstly that from \eqref{eq_weak} we can deduce that
\begin{align*}  
B_1(u_1,u_1) = -\int_{\Gamma_N} g\cdot u_1 \,dS =  B_2(u_2,u_1). 
\end{align*}
In particular we have that 
$$
B_1(u_1,u_1) = - B_1(u_1,u_1) +2 B_2(u_2,u_1). 
$$
This and the symmetry of $B$, gives that
\begin{align*} 
\big(  (\Lambda_2 - \Lambda_1)g, \,g  \big )_{L^2(\Gamma_N)^3} 
&= 
B_1(u_1,u_1) - B_2(u_2,u_2)   \\
&= 
-B_1(u_1,u_1) +  2 B_2(u_2,u_1) - B_2(u_2,u_2)  \pm B_2(u_1,u_1)  \\
&= 
-B_1(u_1,u_1) + B_2(u_1,u_1) -  \big( B_2(u_2,u_2)  - 2 B_2(u_2,u_1)  + B_2(u_1,u_1) \big) \\ 
&= 
B_2(u_1,u_1) - B_1(u_1,u_1) - B_2(w,w),
\end{align*}
where $w = u_2-u_1$.
The claim follows if we can show that there exists a finite dimensional subspace $V$, such that
$$
B_2(w,w) \leq 0 \quad \text{ for } g \in V^\perp,
$$
since then we have that
\begin{align*} 
\big(  (\Lambda_2 - \Lambda_1)g, \,g  \big )_{L^2(\Gamma_N)^3} 
\geq
B_2(u_1,u_1) - B_1(u_1,u_1)  \quad \text{ for } g \in V^\perp,
\end{align*}
and since this is equivalent to the claim.

Let $\{\sigma_k\}$ and  $\{\varphi_k\}$ denote the eigenvalues and 
eigenfunctions of the mixed eigenvalue problem for the operator $L_{\mu_2, \lambda_2,\rho_2}$, that
are given by Proposition \ref{prop_Neumann_spectrum}.
The positive eigenvalues are finite, because of Proposition \ref{prop_Neumann_spectrum} part (2).
Suppose they are enumerated as
$$
\sigma_1 \geq \dots \geq \sigma_{K} \geq 0.
$$
Assume now that $w \perp \{\varphi_1,..,\varphi_K\}$ in the $L^2(\Gamma_N)^3$-inner product. Clearly we have that $w \in \mathcal{V}$. 
By the diagonalization result of Lemma \ref{lem_diagonaliztion}, we have, when writing $w = \sum_k c_k \varphi_k$, 
that
\begin{align*} 
B_2(w,w)  = \sum_{k > K} \sigma_k c_k^2 \leq 0,
\end{align*}
since $\sigma_k < 0$, when $k\geq K$.
The last step of the proof is to write the condition $w \in \spn\{ \varphi_1,..,\varphi_{K} \}^\perp =: W^\perp$
in terms of the boundary condition $g$. Let $S_j : L^2(\Gamma_N)^3 \to L^2(\Omega)^3$ be the solution
operators given by Corollary \ref{cor_ZeroEigenvalue}, for which
$$
S_j:g\mapsto u_j, \quad j=1,2.
$$
Now we have that
$$
\big(w,\varphi_k\big)_{L^2(\Omega)^3}
=\big((S_2-S_1)g,\varphi_k\big)_{L^2(\Omega)^3} 
=\big(g,(S_2-S_1)^*\varphi_k\big)_{L^2(\Gamma_N)^3}. 
$$
And thus that
$$
w = (S_2-S_1)g \in W^\perp \quad\Leftrightarrow\quad g \in ((S_2-S_1)^*W)^\perp.  
$$
Note that $\dim ((S_2-S_1)^*W) < \dim(W)$, and that $\dim(\mathcal{W}) = d(\lambda_2, \mu_2, \rho_2)$.
The claim of the lemma holds when we choose $V=(S_2-S_1)^*W$.

\end{proof}

\noindent
As a direct consequence of Lemma \ref{lem_monotonicity_ineq1} we have the following.

\begin{lem} \label{lem_LambdaMono}
Assume that $\mu_1 \geq \mu_2$, $\lambda_1 \geq \lambda_2$ and $\rho_2 \geq \rho_1$.
Then there exists a finite dimensional subspace $V \subset L^2(\Gamma_N)^3$ such
that
$$
(\Lambda_2 g,g)_{L^2(\Gamma_N)^3}
\geq 
(\Lambda_1 g,g)_{L^2(\Gamma_N)^3}, \qquad \forall g \in V^\perp.
$$
\end{lem}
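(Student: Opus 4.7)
The plan is to read off Lemma \ref{lem_LambdaMono} as an immediate corollary of Lemma \ref{lem_monotonicity_ineq1}. Under the sign hypotheses $\mu_1\geq\mu_2$, $\lambda_1\geq\lambda_2$, $\rho_2\geq\rho_1$, the three coefficient differences $(\mu_1-\mu_2)$, $(\lambda_1-\lambda_2)$, $(\rho_2-\rho_1)$ that appear in the integrand on the right-hand side of the monotonicity inequality are all pointwise non-negative (as elements of $L^\infty(\Omega)$). The squared quantities $|\hat\nabla u_1|^2$, $|\nabla\cdot u_1|^2$, $|u_1|^2$ are non-negative by definition, so each of the three terms in the integrand is non-negative a.e., and so the entire integral is non-negative regardless of the choice of boundary data $g$.

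Concretely, I would take $V\subset L^2(\Gamma_N)^3$ to be exactly the finite-dimensional subspace produced by Lemma \ref{lem_monotonicity_ineq1} applied to the two coefficient triples $(\lambda_1,\mu_1,\rho_1)$ and $(\lambda_2,\mu_2,\rho_2)$; recall that the lemma guarantees $\dim(V)\leq d(\lambda_2,\mu_2,\rho_2)$, which is finite by part (2) of Proposition \ref{prop_Neumann_spectrum}. For every $g\in V^\perp$, Lemma \ref{lem_monotonicity_ineq1} yields
\begin{align*}
((\Lambda_2-\Lambda_1)g,g)_{L^2(\Gamma_N)^3}
\;\geq\;\int_\Omega 2(\mu_1-\mu_2)|\hat\nabla u_1|^2+(\lambda_1-\lambda_2)|\nabla\cdot u_1|^2+\omega^2(\rho_2-\rho_1)|u_1|^2\,dx\;\geq\;0,
\end{align*}
which rearranges to the claimed inequality $(\Lambda_2 g,g)_{L^2(\Gamma_N)^3}\geq(\Lambda_1 g,g)_{L^2(\Gamma_N)^3}$.

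Since the argument is purely a sign check inside the integrand combined with a direct quotation of the preceding lemma, there is essentially no obstacle: no additional spectral analysis, Korn inequality, or Runge approximation is needed beyond what has already been established. The only point worth stating explicitly in the write-up is that the $u_1$ appearing in the monotonicity inequality is the solution of \eqref{eq_bvp1} for the first set of coefficients with boundary data $g$, which is well-defined by Corollary \ref{cor_ZeroEigenvalue} under the standing assumption that $\omega$ is not a resonance frequency. The proof is therefore a one-line deduction.
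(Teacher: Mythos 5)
Your proof is correct and matches the paper's own argument almost verbatim: both simply invoke Lemma \ref{lem_monotonicity_ineq1}, observe that the sign hypotheses make each term of the integrand nonnegative, and rearrange. No further comment is needed.
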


\begin{proof}
Notice that Lemma \ref{lem_monotonicity_ineq1} gives directly that
\begin{small}
\begin{align*}  
\big(  (\Lambda_2 - \Lambda_1)g, \,g  \big )_{L^2(\Gamma_N)^3} 
\geq
\int_\Omega 
2(\mu_1-\mu_2 ) |\hat \nabla u_1|^2  + (\lambda_1 - \lambda_2 ) |\nabla \cdot u_1|^2  + \omega^2(\rho_2-\rho_1) |u_1|^2 \,dx \geq 0,
\end{align*}
\end{small}
for $g \in V^\perp$, which implies the claim.

\end{proof}

\section{Localization by Runge approximation}\label{sec_localization}

\noindent
We will prove localization results for solutions to the boundary value problem in 
\eqref{eq_bvp1}. The use of localized solutions go back to \cite{Ge08}, and plays
a fundamental part in justifying monotonicity based shape reconstruction.
Here we use an approach based on Runge approximation as in \cite{HPS19b}. 
The localized solutions will later be used in conjunction with the monotonicity inequality
of Lemma \ref{lem_monotonicity_ineq1}. The main challenge in this is to 
employ Runge approximation with solutions which on the boundary lie in $V^\perp$,
where $V$ is the subspace in Lemma \ref{lem_monotonicity_ineq1}. We deal with this
as in \cite{HPS19b}. Here we however have the extra complication of localizing
$\nabla \cdot u$ and $\hat \nabla u$ suitably.

We begin by showing  how to localize the solutions assuming we have the approximation 
results of Lemma \ref{lem_blowupCand} and Lemma \ref{lem_approxCrit} at hand. We will
postpone the proofs of Lemmas \ref{lem_blowupCand} and \ref{lem_approxCrit} to
subsections \ref{sec_div} and \ref{sec_runge}, as these need more a detailed analysis.

\medskip
\noindent
We will localize a solution so that it is small in some set $D_1 \subset \Omega$
and large in another set $D_2 \subset \Omega$. We will assume that $\p D_1$ is Lipschitz
and that $\p D_2$ is smooth, and moreover that
\begin{align}  \label{eq_Dassump}
D_1 \cap D_2 = \emptyset, \qquad \Omega \setminus (D_1 \cup D_2) \text{ is connected}, \qquad
\overline{\Omega \setminus (D_1 \cup D_2)}\cap\Gamma_N \neq \emptyset.
\end{align}

\begin{prop} \label{prop_localization1}
Assume that $D_1,D_2 \subset \Omega$ are  as in \eqref{eq_Dassump}.
Let $V\subset L^2(\Gamma_N)^3$ be a subspace with $\dim (V)<\infty$,
then there exists a sequence $g_j \in L^2(\Gamma_N)^3$ s.t. $ g_j \perp V$ in the $L^2(\Gamma_N)^3$-norm,
such that
$$
\| u_j \|_{L^2(D_1)^3  } \to 0, \qquad
\| u_j \|_{L^2(D_2)^3  } \to \infty,
$$
as $j \to \infty$, and where $u_j$ solves \eqref{eq_bvp1}, with the boundary conditions $g_j$.
\end{prop}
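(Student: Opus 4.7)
The plan is to combine the two auxiliary lemmas that this section is built around: the construction of ``candidate'' solutions (Lemma \ref{lem_blowupCand}), which are defined on a neighbourhood of $\overline{D_1\cup D_2}$ and already exhibit the desired decay on $D_1$ and blow-up on $D_2$, and the Runge-type approximation (Lemma \ref{lem_approxCrit}), which promotes such candidates to genuine solutions of \eqref{eq_bvp1} whose boundary data can be forced to lie in $V^\perp$. This follows the overall strategy of \cite{HPS19b}, suitably adapted to the Navier equation.

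First I would apply Lemma \ref{lem_blowupCand} to obtain a sequence $(v_j)$ of local solutions to $L_{\lambda,\mu,\rho} v_j = 0$ on a suitable neighbourhood of $\overline{D_1\cup D_2}$ in $\Omega$, satisfying
\[
\|v_j\|_{L^2(D_1)^3} \to 0 \quad \text{and} \quad \|v_j\|_{L^2(D_2)^3} \to \infty.
\]
The assumptions \eqref{eq_Dassump} — disjointness of $D_1$ and $D_2$, connectedness of $\Omega\setminus(D_1\cup D_2)$, and contact with $\Gamma_N$ — are exactly what is needed to set up this construction and to support unique continuation afterwards.

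Next, for each index $j$, Lemma \ref{lem_approxCrit} supplies, for any tolerance $\varepsilon_j>0$, a boundary datum $g_j \in V^\perp \subset L^2(\Gamma_N)^3$ whose associated solution $u_j$ of \eqref{eq_bvp1} satisfies
\[
\|u_j - v_j\|_{L^2(D_1\cup D_2)^3} < \varepsilon_j.
\]
Choosing $\varepsilon_j := \min\bigl(j^{-1},\, \tfrac{1}{2}\|v_j\|_{L^2(D_2)^3}\bigr)$ and applying the triangle and reverse triangle inequalities on $D_1$ and $D_2$ respectively gives
\[
\|u_j\|_{L^2(D_1)^3} \leq \|v_j\|_{L^2(D_1)^3}+\varepsilon_j \to 0,
\qquad
\|u_j\|_{L^2(D_2)^3} \geq \|v_j\|_{L^2(D_2)^3}-\varepsilon_j \to \infty,
\]
which is exactly the conclusion.

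The main obstacle is not the diagonal extraction above but Lemma \ref{lem_approxCrit} itself: a naive Runge approximation only delivers boundary data in all of $L^2(\Gamma_N)^3$, whereas here we must keep $g_j \in V^\perp$. The standard remedy, which is the one we expect to implement, is a Hahn--Banach / duality argument on the restriction map $g \mapsto u|_{D_1\cup D_2}$ combined with the bulk and boundary unique continuation principles (Propositions \ref{prop_UCP} and \ref{prop_bndryUCP}); the finite-dimensionality of $V$ is what allows the approximation to survive the extra orthogonality constraint. A parallel difficulty, handled inside Lemma \ref{lem_blowupCand} via the Korn-type estimate of Lemma \ref{lem_symGradient} and the divergence analysis of subsection \ref{sec_div}, is that in order to blow up on $D_2$ one must first produce candidate solutions with non-trivial strain tensor and divergence; these ingredients are precisely what the following two subsections are devoted to.
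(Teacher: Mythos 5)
Your overall scheme is close to the paper's, but you have misread what Lemma \ref{lem_blowupCand} actually provides, and as a result there is a genuine gap in your argument.

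Lemma \ref{lem_blowupCand} does \emph{not} produce a sequence $(v_j)$ with $\|v_j\|_{L^2(D_1)^3}\to 0$ and $\|v_j\|_{L^2(D_2)^3}\to\infty$; it only produces a \emph{single} candidate $w\in\mathcal{S}_D$ with $w\equiv 0$ on $D_1$ and $\|w\|_{L^2(D_2)^3}\neq 0$ (together with non-vanishing strain and divergence, which is only needed for Proposition \ref{prop_loc2}). So the blow-up you want to invoke in your first step is not available from the lemma, and your subsequent $\varepsilon_j$-tolerance argument, while algebraically fine, is applied to a sequence that you have not constructed. The missing ingredient is a scaling argument: the paper first uses Lemma \ref{lem_approxCrit} to find solutions $w_j$ of \eqref{eq_bvp1} with boundary data $\tilde g_j\in V^\perp$ satisfying $\|w_j-w\|_{L^2(\Omega)^3}\leq j^{-2}$, and then sets $u_j:=jw_j$, $g_j:=j\tilde g_j$. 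Linearity of $V^\perp$ keeps $g_j\in V^\perp$, while the approximation error becomes $\|u_j-jw\|_{L^2(\Omega)^3}\leq j^{-1}$, which, since $w\equiv 0$ on $D_1$ and $\|w\|_{L^2(D_2)^3}\neq 0$, yields $\|u_j\|_{L^2(D_1)^3}\leq j^{-1}\to 0$ and $\|u_j\|_{L^2(D_2)^3}\geq j\|w\|_{L^2(D_2)^3}-j^{-1}\to\infty$. You could equivalently repair your own argument by taking $v_j:=jw$ (legitimate since $\mathcal{S}_D$ is a linear subspace) and then running your $\varepsilon_j$ estimate; the point is that the blow-up must be manufactured by scaling, not read off from Lemma \ref{lem_blowupCand}. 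Your remark about the Hahn--Banach / unique continuation mechanism behind Lemma \ref{lem_approxCrit} is accurate but secondary here, as that machinery is supplied by the cited lemma and need not be re-derived in this proof.
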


\begin{proof}
By Lemma \ref{lem_blowupCand} and Lemma \ref{lem_approxCrit} there is a function  
$$
w = 
\begin{cases}
 w \quad\text{ in }D_2 \quad\text{ with }\|  w \|_{ L^2(D_2)^3 } \neq 0,  \\
0 \quad\,\,\text{ in }D_1.
\end{cases}
$$
which can be approximated by solutions $w_j$ to \eqref{eq_bvp1} 
with the boundary  condition $ \tilde g_j  \in V^\perp$, such that 
$$
\|  w_j - w \|_{L^2(\Omega)^3 } \leq \tfrac{1}{j^2}, \quad j \in \N.
$$
We now set $u_j := jw_j$ and $g_j := j\tilde g_j$, then
$$
\|  u_j - jw \|_{L^2(\Omega)^3 } \leq \tfrac{1}{j}.
$$
It is easy to see that $u_j$ satisfy the desired norm estimates.
The claim follows from this.
\end{proof}

\noindent
The next Proposition extends Proposition \ref{prop_localization1}
by showing that we can also localize the derivatives in a suitable way.

\begin{prop} \label{prop_loc2}
Assume that $D_1,D_2 \subset \Omega$ are  as in \eqref{eq_Dassump}, and that $D'_i \Subset D_i$, $i=1,2$
are open and non-empty.
Let $V\subset L^2(\Gamma_N)^3$ be a subspace with $\dim (V)<\infty$,
then there exists a sequence $g_j \in L^2(\Gamma_N)^3$, such that $ g_j \perp V$ in the $L^2(\Gamma_N)^3$-norm,
and for which
$$
\| u_j \|_{ L^2(D_1)^3 }, 
\;\| \hat \nabla u_j \|_{ L^2(D'_1)^{3\times 3} }, 
\;\| \nabla \cdot u_j \|_{ L^2(D'_1)} 
\to 0, 
$$
and for which
$$
\| u_j \|_{ L^2(D_2)^3 }, 
\;\| \hat \nabla u_j \|_{ L^2(D'_2)^{3\times 3} }, 
\;\| \nabla \cdot u_j \|_{ L^2(D'_2)} 
\to \infty, 
$$
as $j \to \infty$, and where $u_j$ solves \eqref{eq_bvp1}, with the boundary conditions $g_j$.
\end{prop}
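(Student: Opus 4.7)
The plan is to upgrade the $L^2$-localization of Proposition \ref{prop_localization1} to an $H^1$-localization on the interior subsets $D'_i \Subset D_i$ via interior regularity for the Navier system. First I would invoke Lemmas \ref{lem_blowupCand} and \ref{lem_approxCrit} to construct a candidate $w$ with $w|_{D_1} = 0$ and with $w|_{D_2}$ a non-trivial solution of the homogeneous Navier equation on $D_2$, additionally chosen so that both $\|\hat \nabla w\|_{L^2(D'_2)^{3\times 3}}$ and $\|\nabla \cdot w\|_{L^2(D'_2)}$ are strictly positive. The Runge-type approximation then yields solutions $w_j$ of \eqref{eq_bvp1} with boundary data $\tilde g_j \in V^\perp$ and $\|w_j - w\|_{L^2(\Omega)^3} \leq 1/j^2$. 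Setting $u_j := j w_j$ and $g_j := j \tilde g_j$, the $L^2$-claims on $D_1$ and $D_2$ would then follow by the triangle inequality, just as in Proposition \ref{prop_localization1}.

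For the derivative claims the key tool I would use is the interior Caccioppoli estimate for the Navier system: if $L_{\lambda,\mu,\rho} v = 0$ in an open set $U$ and $U' \Subset U$, then
\[
\|v\|_{H^1(U')^3} \leq C \| v \|_{L^2(U)^3},
\]
with $C$ depending only on $\mathrm{dist}(U',\partial U)$ and the coefficients. This is obtained in the usual way by testing the weak form of the equation against $\phi^2 v$ for a smooth cutoff $\phi$ and combining Young's inequality with the Korn bound \eqref{eq_korn2_1}. Applying this to $w_j$ on the pair $D'_1 \Subset D_1$, where the candidate $w = 0$ so that $w_j$ itself solves the equation, gives $\|u_j\|_{H^1(D'_1)^3} \leq Cj\|w_j\|_{L^2(D_1)^3} \leq C/j \to 0$, which yields the required decay of $\hat\nabla u_j$ and $\nabla\cdot u_j$ on $D'_1$. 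Applying the estimate to the difference $w_j - w$ on $D'_2 \Subset D_2$, where both terms solve the Navier equation, yields $\|u_j - jw\|_{H^1(D'_2)^3} \leq C/j$, and the reverse triangle inequality then forces $\|\hat\nabla u_j\|_{L^2(D'_2)^{3\times 3}}$ and $\|\nabla\cdot u_j\|_{L^2(D'_2)}$ to grow at least linearly in $j$, using the strict positivity of the corresponding norms of $w$.

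The hard part is not the regularity upgrade, which is routine for second-order elliptic systems, but rather ensuring that the candidate $w$ can be chosen with non-vanishing divergence on $D'_2$. This is precisely the obstruction flagged in the introduction and resolved in subsection \ref{sec_div}: one needs a criterion on the boundary data for the Navier equation on $D_2$ that guarantees $\nabla \cdot w \not\equiv 0$, and this is established via the correspondence with the time-harmonic Maxwell system. Non-triviality of $\hat \nabla w$ is comparatively easy and would follow from the Korn inequality \eqref{eq_korn2_2} together with Lemma \ref{lem_symGradient}.
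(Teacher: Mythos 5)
Your proposal is correct and follows essentially the same route as the paper: build $w$ and $u_j = j w_j$ exactly as in Proposition \ref{prop_localization1} via Lemmas \ref{lem_blowupCand} and \ref{lem_approxCrit}, then use interior elliptic regularity (your Caccioppoli estimate $\|v\|_{H^1(U')^3}\le C\|v\|_{L^2(U)^3}$ for $U'\Subset U$, which is what the paper invokes under the name ``elliptic regularity'') applied to $u_j - jw$ to transfer the $L^2$-smallness on $D_i$ to $H^1$-smallness on $D'_i$, and conclude by the (reverse) triangle inequality. The only genuine difference is presentational: you spell out the derivation of the interior estimate and you explicitly flag the need to have $\|\hat\nabla w\|_{L^2(D'_2)}$ and $\|\nabla\cdot w\|_{L^2(D'_2)}$ strictly positive (a point the paper passes over quietly, since Lemma \ref{lem_blowupCand} as stated only gives non-vanishing on $D_2$, not on $D'_2$), but the structure of the argument is identical.
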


\begin{proof}
Let $u_j$ and $w$ be as in the proof of Proposition \ref{prop_localization1}.
By Lemma \ref{lem_blowupCand} we know that we can pick these, so that 
\begin{align*} 
\;\| \hat \nabla (jw) \|_{ L^2(D_2)^{3\times 3} }, 
\;\| \nabla \cdot (jw) \|_{ L^2(D_2)} 
\to \infty, 
\end{align*}
as $j \to \infty$, and  so that
\begin{align*} 
\;\| \hat \nabla (jw) \|_{ L^2(D_1)^{3\times 3} }, 
\;\| \nabla \cdot (jw) \|_{ L^2(D_1)} 
 = 0. 
\end{align*}
By elliptic regularity we have that
\begin{align*} 
\| \hat \nabla u_j - \hat \nabla (jw) \|_{L^2(D'_i)^{3\times 3}}
\leq
\|  u_j -  jw \|_{H^1(D'_i)^3}
\leq 
C \| u_j - jw \|_{L^2(D_i)^3} \to 0,
\end{align*}
as $j \to \infty$. 
It follows that 
$$
\;\| \hat \nabla u_j \|_{ L^2(D'_2)^{3\times 3}} \to \infty \qquad
\;\| \hat \nabla u_j \|_{ L^2(D'_1)^{3\times 3}} \to 0.
$$
The claim for $\nabla \cdot u_j$ can be proved in a similar manner.

\end{proof}
\noindent
To complete the proof of Propositions \ref{prop_localization1} and \ref{prop_loc2} we need to prove the lemmas
that they rely on. We will do this in the next two subsections.

\subsection{Boundary conditions for divergence} \label{sec_div}
In this subsection we start proving Lemmas  \ref{lem_approxCrit} and \ref{lem_blowupCand}
that were needed by Propositions \ref{prop_localization1} and \ref{prop_loc2}
in the previous part.
One of the main challenges in making the Runge approximation argument work in the next subsection is
to find a general enough criterion that guarantees that a solution to the Navier equation
has divergence. 
We  will look for a criterion on the boundary condition of the solution.
The complicating factor is that the criterion has to be such that it works even when we
consider boundary conditions in the orthogonal complement of some finite dimensional subspace.

The following Lemma gives a characterization of zero divergence solutions to the Navier equation.
The Lemma says essentially that a zero divergence solution to the Navier equation
is already uniquely determined by the tangential part of a Dirichlet condition 
(modulo some finite dimensional subspace, which appears if $\omega$ is a type of 
a resonance frequency).
The second lemma uses this result to formulate a condition of the Dirichlet
value of a solution that guarantees zero divergence.

The main idea behind the lemma is to relate a zero divergence solution of the Navier 
equation to a second order Maxwell type system  of the form
$$
\nabla \times (\nabla \times u) + \dots = 0,
$$
where the dots indicate lower order terms. The theory related to Maxwell's equations
tells us that the boundary condition $\nu \times u |_{\p D} =f$ is enough to determine the
solution $u$ uniquely. Note that this is 
less information than in a Dirichlet condition $u|_{\p\Omega} = f$
that is needed for the Navier equation, since the first condition 
does not determine the normal component of $u$.

\begin{lem} \label{lem_tang_unique} 
Assume that $\lambda,	\mu, \rho \in C^\infty(D) \cap L^\infty_+(\Omega)$,
where $D \subset \R^3$ is a bounded domain with smooth boundary.
Suppose $u \in H^1(D)^3$ solves
\begin{align}  \label{eq_bvp_tang}
\nabla \cdot (\C \hat \nabla u ) +  \omega^2 \rho u &= 0, \quad \text{ in } D.
\end{align}
And suppose that $u$ is such that 
$$
(u \times \nu)|_{\p D} = 0 \quad \text{and } \quad \nabla \cdot u = 0,\quad \text{ in } D.
$$
Then there exists a subspace $\mathcal{N} \subset H^1(D)^3$, with $\dim(\mathcal{N}) < \infty$,
and for which $u=0$ as an element in $H^1(D)^3/\mathcal{N}$.
\end{lem}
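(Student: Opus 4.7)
The plan is to follow the strategy sketched at the start of this subsection: exploit $\nabla \cdot u = 0$ to recast the Navier equation as a Maxwell-type curl-curl system, and then apply Fredholm theory with the tangential boundary condition $u \times \nu = 0$, which is the natural boundary condition in the Maxwell setting.

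First I would derive the Maxwell-type equation. Since $\nabla\cdot u = 0$, the Lam\'e term $\nabla(\lambda \nabla\cdot u)$ vanishes, and
\[
\nabla \cdot (2\mu\, \hat\nabla u) = 2(\hat\nabla u)\nabla\mu + \mu\Delta u,
\]
because $\nabla\cdot\hat\nabla u = \tfrac{1}{2}\Delta u + \tfrac{1}{2}\nabla(\nabla\cdot u) = \tfrac{1}{2}\Delta u$. Combining with the vector identity $\Delta u = \nabla(\nabla\cdot u) - \nabla\times(\nabla\times u) = -\nabla\times(\nabla\times u)$, equation \eqref{eq_bvp_tang} becomes
\[
-\mu\,\nabla\times\nabla\times u + 2(\hat\nabla u)\nabla\mu + \omega^2\rho u = 0 \quad \text{in } D.
\]
I would then set the problem on the Hilbert space
\[
X := \{u \in H^1(D)^3 : \nabla\cdot u = 0,\; u\times \nu|_{\partial D} = 0\}
\]
of divergence-free, tangentially vanishing vector fields, and invoke the classical Maxwell regularity estimate $\|u\|_{H^1(D)^3} \leq C(\|\nabla\times u\|_{L^2} + \|u\|_{L^2})$ valid on $X$ for smooth $D$, which in particular makes the embedding $X \hookrightarrow L^2(D)^3$ compact.

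Next, I would test the curl-curl equation against $v \in X$ and integrate by parts. The boundary term $\int_{\partial D} \mu(\nabla\times u)\cdot(\nu\times v)\,dS$ vanishes because $\nu\times v = 0$, yielding the bilinear form
\[
a(u,v) := \int_D \Big[ \mu(\nabla\times u)\cdot(\nabla\times v) + (\nabla\times u)\cdot(\nabla\mu\times v) + 2(\hat\nabla u)\nabla\mu\cdot v + \omega^2\rho\, u\cdot v \Big]\,dx.
\]
Setting $a_s := a + s(\cdot,\cdot)_{L^2(D)^3}$ and using the Maxwell estimate together with Young's inequality to absorb the $\nabla\mu$-dependent lower-order terms into the leading $\int_D \mu |\nabla\times u|^2\,dx$ term, one obtains that $a_s$ is coercive and continuous on $X$ for $s$ sufficiently large. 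Lax--Milgram then gives a bounded inverse of the associated shifted operator $T+sI$, and composing it with the compact inclusion $X \hookrightarrow L^2(D)^3$ yields a compact resolvent on $L^2(D)^3$. Riesz--Schauder theory then shows that $T$ has discrete spectrum with finite-dimensional eigenspaces; in particular any $u$ satisfying the hypotheses of the lemma belongs to $\mathcal{N} := \ker T \cap X$, which is finite-dimensional.

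The main obstacle is establishing the Maxwell-type regularity estimate on $X$, equivalently the compact embedding $X\hookrightarrow L^2(D)^3$, which is standard on smooth domains but must be quoted from the Maxwell literature rather than derived by the scalar elliptic regularity used elsewhere in the paper. A secondary difficulty is that the first-order term $2(\hat\nabla u)\nabla\mu$ destroys symmetry of $a$; however, being genuinely lower order relative to the curl-curl principal part, it behaves as a compact perturbation and does not affect the Fredholm conclusion.
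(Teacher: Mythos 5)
Your proposal follows essentially the same route as the paper's proof: converting the divergence-free Navier equation to a curl-curl system, working in the space $X$ of divergence-free vector fields with vanishing tangential trace, invoking the Friedrichs/Gaffney inequality for coercivity of a shifted bilinear form, and applying Lax--Milgram plus Fredholm/Riesz--Schauder theory to conclude that the relevant kernel is finite dimensional. The only differences are cosmetic: the paper routes the derivation through smooth divergence-free approximants $u_k$ before passing to $X$ by density, and tests against $\mu v$ to obtain its form $B'$ with coefficients $\log\mu$ and $\rho/(2\mu)$, while you test against $v$ directly (note the signs on the two lower-order terms in your $a(u,v)$ should be negative, though this is immaterial since coercivity uses absolute values and a large shift $s$).
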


\begin{proof}
Assume that $u\in H^1(D)^3$ is a weak solution of \eqref{eq_bvp_tang} and $\nabla \cdot u = 0$.
Then
\begin{align} \label{eq_B0}
B(u,v) = 0 \qquad \forall v \in C^\infty_0(D)^3.
\end{align}
We can choose a sequence $u_k \in C^\infty(D)^3$, with $\nabla \cdot u_k = 0$, such that
$u_k \to u$, in the $H^1(D)^3$-norm as $k \to \infty$.
Integrating by parts gives that
\begin{align*} 
-B(u_k,v) =
\int_D 2 \mu \hat \nabla u_k :\hat \nabla v - \omega^2\rho u_k \cdot v\,dx 
=
\int_D  -\nabla \cdot ( 2 \mu \hat \nabla u_k ) \cdot v - \omega^2\rho u_k \cdot v\,dx. 
\end{align*}
We rewrite the 2nd order term. Notice 
first that by a direct computation we have 
that
$$
2 \nabla \cdot \hat \nabla u_k - \nabla \cdot ((\nabla \cdot u_k) I) = \Delta u_k. 
$$
Using this and that $\nabla \cdot u_k = 0$, we can add zero suitable so that
\begin{align*} 
\nabla \cdot ( 2 \mu \hat \nabla u_k ) 
= 2 \mu  \nabla \cdot \hat \nabla u_k +  2 \nabla \mu \hat \nabla u_k 
= \mu \Delta u_k +  2 \nabla \mu \hat \nabla u_k.
\end{align*}
From this and the identity $\Delta u = \nabla(\nabla \cdot u) - \nabla \times (\nabla \times u)$, we get that
\begin{align*} 
\nabla \cdot ( 2 \mu \hat \nabla u_k ) 
= -\mu \nabla \times (\nabla \times u_k) +  2 \nabla \mu \hat \nabla u_k.
\end{align*}
Using this and the integration by parts formula for the curl, we get that
\begin{align*} 
-B(u_k,v) 
&= \int_D (\mu \nabla \times (\nabla \times u_k) -  2 \nabla \mu \hat \nabla u_k)  \cdot v - \omega^2\rho u_k \cdot v\,dx \\
&= \int_D \nabla \times u_k \cdot \nabla \times (\mu v) -  2 \nabla  \tilde \mu \hat \nabla u_k  \cdot (\mu v) 
- \omega^2\tilde \rho u_k \cdot (\mu v)\,dx, 
\end{align*}
where we define for convenience $\tilde \mu := \log(\mu)$ and $\tilde \rho := \frac{\rho}{2\mu}$.
It is straight forward to see that, when
defining a bilinear form $B'$, and  taking the limit $k \to \infty$ of the previous equation
and using \eqref{eq_B0}, that 
\begin{align*} 
B'(u,\mu v) 
:= \int_D \nabla \times u \cdot \nabla \times (\mu v) -  2 \nabla  \tilde \mu \hat \nabla u   \cdot (\mu v) 
- \omega^2\tilde \rho u  \cdot (\mu v)\,dx = 0, 
\end{align*}
for all $v \in C^\infty_0(D)^3$.
By writing $w = \mu v$, for some $v \in C^\infty_0(D)^3$, we see that in fact
\begin{align}  \label{eq_Bprime}
B'(u,w) = 0, \qquad 
\forall w \in C^\infty_0(D)^3. 
\end{align}
Now consider the space
$$
H_0(\operatorname{curl}; \,D) := \big\{ u \in L^2(D)^3 \,:\, \nabla \times u \in L^2(D)^3,\, (\nu \times u)|_{\p D} = 0 \big\},
$$
equipped with the norm $\| v \|^2_{ H(\operatorname{curl};\,D) }:= \| \nabla \times v \|^2_{ L^2(D)^3 }+\| v  \|^2_{ L^2(D)^3 }$.
The set $C_0^\infty(\Omega)^3$  is dense in $H_0(\operatorname{curl}; \,D)$. Thus it follows from \eqref{eq_Bprime}
with straight forward estimates that
\begin{align*} 
B'(u,w) = 0, \qquad 
\forall w \in H_0(\operatorname{curl}; \,D)
\end{align*}
also holds. Consider the closed subspace $X \subset H^1(D)^3$, defined by
$$
X := \big\{ u \in H^1(D)^3 \,:\, \nabla \cdot u = 0,\, (\nu \times u)|_{\p D} = 0 \big\}.
$$
Firstly since $X \subset H_0(\operatorname{curl}; \,D)$ we also have that
\begin{align} \label{eq_Bhomogeneous}
B'(u,w) = 0, \qquad 
\forall w \in  X.
\end{align}
To show that $u$ is zero modulo some finite dimensional subspace we consider the inhomogeneous problem, 
related to \eqref{eq_Bhomogeneous}, of finding a  $u_F \in X$ for which
\begin{align}  \label{eq_weakApprox}
B'(u_F,v) =  \int_{D} F\cdot v \,dx, \quad \forall v \in X,
\end{align}
for a given $F \in L^2(D)^3$.\footnote{ Note that this corresponds to showing that a weak approximate solution $w$ in the subspace $X$ to
\begin{align*}  
\begin{cases}
\nabla \times (\nabla \times  w) - \nabla \tilde \mu \hat \nabla w -  \omega^2 \tilde \rho w &= 0, \\
\;\quad\quad\quad\quad\quad\quad\quad\quad\quad (\nu \times w ) |_{\p \Omega} &= 0, 
\end{cases}
\end{align*}
needs to be zero modulo some finite dimensional subspace.}
We will apply the Lax-Milgram Lemma. 
First we check that  $B'(u,u)+\tau(u,u)_{L^2}$, is coercive on $X$, for suitable $\tau \geq 0$.
To this end  note that by the triangle inequality, we get that
\begin{align*} 
|B' (v,v) |
&= 
\Big|\int_{D} \nabla \times v \cdot \nabla \times v \,dx
- \int_D \nabla \tilde \mu \hat \nabla v \cdot v - \omega^2 \tilde \rho v \cdot v \,dx\Big| \\
&\geq
\| \nabla \times v \|^2_{L^2(D)^3}
- \Big| \int_D \nabla \tilde \mu \hat \nabla u \cdot v \, dx \Big|  - C \| v \|^2_{ L^2(D)^3 }.
\end{align*}
To obtain a lower bound in the $H^1$-norm we will use Friedrich's or Gaffney inequality.
From this it follows that
\begin{equation} \label{eq_friedrichs}
\begin{aligned}
\| v \|_{ H^1(D)^3} 
&\sim \| \nabla \times v \|_{ L^2(D)^3 }+\| \nabla \cdot v  \|_{ L^2(D) }+\| v  \|_{ L^2(D)^3 } + \| \nu \times v \|_{H^{1/2}(\p D)^3} \\
&= \| \nabla \times v \|_{ L^2(D)^3 }+\| v  \|_{ L^2(D)^3 },
\end{aligned}
\end{equation}
since $v \in X$. See e.g. \cite{Ce96} Corollary 5 on p.51.  
From \eqref{eq_friedrichs} we get now  that 
\begin{align*} 
| B' (v,v) |
&\geq
\| v \|^2_{H^1(D)^3}
- \Big| \int_D \nabla \tilde \mu \hat \nabla v \cdot v \, dx \Big|  - (C+1) \| v \|^2_{ L^2(D)^3 }.
\end{align*}
To estimate the middle term notice that 
\begin{align*} 
\Big| \int_D \nabla \tilde \mu \hat \nabla v \cdot v \, dx \Big|  
\leq C \| v \|_{ H^1(D)^3 }\| v \|_{ L^2(D)^3} 
\leq \eps \| v \|^2_{ H^1(D)^3 } + \frac{C}{\eps} \| v \|^2_{ L^2(D)^3}, 
\end{align*}
with $\eps >0$. The two previous inequalities give the estimate
\begin{align*} 
| B' (v,v) |
&\geq
(1 - \eps) \| v \|^2_{H^1(D)^3} - \frac{C'}{\eps}  \| v \|^2_{ L^2(D)^3 },
\end{align*}
for small enough $\eps > 0$. By choosing a large enough $\tau \geq 0 $
we see that $B'(u,u) + \tau(u,u)_{L^2}$ is coercive on $X$.
It is straight forward to see that
\begin{align*} 
|B' (v,u) | \leq  \|u \|_{H^1(D)^3  } \|v \|_{H^1(D)^3}
\end{align*}
holds, and that $B'(u,u) + \tau(u,u)_{L^2}$ is also continuous. 

Let $\tau_0 \geq 0$ be such that $ B'_{\tau_0} := B' + \tau_0(\cdot,\cdot)_{L^2}$ is coercive and continuous. Then
the Lax-Milgram lemma  gives us a unique $u_F \in X$, such that
$$
B'_{\tau_0}(u_F,v) = (F,v)_{L^2(D)^3}, \qquad \forall v \in X.
$$
Thus there is a $\tau_0 \geq 0$ such that we may define the inverse operator $K'_{\tau_0}$, as
$$
K'_{\tau_0}: L^2(D)^3 \to L^2(D)^3,\qquad 
K'_ {\tau_0}F := u_F. 
$$
It is illustrative to first consider the case when $\tau_0 = 0$. 
Since $u$ solves \eqref{eq_Bhomogeneous}, we have 
thus by uniqueness $u=0$. In this case the claim of the lemma holds, with the choice $\mathcal{N} =\{0\}$.

\medskip
\noindent
Consider now the case where we cannot choose $\tau_0 = 0$. 
In this case we will generally have a non trivial eigenspace $\mathcal{N}$ corresponding to 
the eigenvalue zero of $K'_{\tau_0}$.
The operator $K'_{\tau_0}$ is compact on $X$ by Sobolev embedding, since $u_F \in H^1(D)^3$.
Since $u_F$ solves \eqref{eq_weakApprox} with $F=0$, we see that
$$
 B'(u_F,v) = 0,\qquad \forall v \in X.
$$
We will show that this implies that $u_F=0$ in $X / \mathcal{N}$, for some $\mathcal{N} \subset X$,
with $\dim(\mathcal{N}) < \infty$.
From the above equation we get that 
$$
 B'_{\tau_0}(u_F,v) = \tau_0(u_F,v),\qquad \forall v \in X,
$$
and therefore
\begin{align}  \label{eq_KKer}
u_F = \tau_0 K'_{\tau_0} u_F. 
\end{align}

We now use Fredholm theory.
By Theorem 2.22 in \cite{Mc00} p.35 we see that $I - \tau_0 K'_{\tau_0}$ is a Fredholm operator
of index $0$.
By Theorem 2.27 in p.37-38 in \cite{Mc00}, we have that for any operator $A:X \to Y$ of index $0$, 
we have that
$$
\dim (\Ker A) = p < \infty.
$$
Applying this to $A = I - \tau_0 K'_{\tau_0}$, gives that
$$
\dim(\Ker(I - \tau_0 K'_{\tau_0})) = p < \infty.
$$
By \eqref{eq_KKer} we have that $u_F \in \Ker\big(I - \tau_0 K'_{\tau_0}\big)$ so
$$
u_F = 0 \qquad\text{ in }\quad X / \mathcal{N},
$$
when we choose $\mathcal{N} := \Ker(I - \tau_0 K'_{\tau_0})$.

\end{proof}

\begin{rem} \label{rem_divSol}
If $\mu$ and $\rho$ are constant then it seems that the argument could 
with some modifications be used to construct solutions $u$
with $\nabla \cdot  u = 0$.
\end{rem}

\noindent
We use Lemma \ref{lem_tang_unique} to find a criteria for when a solution has divergence.
The essential content of the next Lemma is that a non-zero Dirichlet condition, 
with a tangential component that is zero, will always create a solution with non-zero divergence.

\begin{cor} \label{cor_divNonZero}
Assume that $\lambda,	\mu, \rho \in C^\infty(D)\cap L^\infty_+(\Omega)$, 
where $D \subset \R^3$ is a bounded domain with smooth boundary.
Suppose that $u \in H^1(D)^3$ solves
\begin{align}  \label{eq_bvp5}
\begin{cases}
\nabla \cdot (\C \hat \nabla u ) +  \omega^2 \rho u &= 0, \quad \text{ in } D\\
\;\quad\quad\quad \quad\quad u  |_{\p D} &= f, 
\end{cases}
\end{align}
where $f \in H^{1/2}(D)^3$ and $f \neq 0$. Then there exists a
finite dimensional subspace $\mathcal{N} \subset L^2(\p D)^3$ such that if 
$$
\nu \times f |_{\p D} = 0 \quad \text{ and }\quad f \perp \mathcal{N},
$$
then $\nabla \cdot u \neq 0$ in $D$.
\end{cor}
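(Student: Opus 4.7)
The plan is to argue by contradiction using Lemma \ref{lem_tang_unique}. Suppose $\nabla \cdot u = 0$ in $D$. Then the hypotheses of Lemma \ref{lem_tang_unique} are satisfied: $u$ solves the homogeneous Navier equation, has zero divergence, and its tangential trace $\nu \times u|_{\partial D} = \nu \times f = 0$ by assumption. The lemma therefore yields a finite dimensional subspace $\mathcal{N}' \subset H^1(D)^3$ (depending only on $\lambda,\mu,\rho,\omega,D$, not on $u$) with $u = 0$ in $H^1(D)^3/\mathcal{N}'$, i.e.\ $u \in \mathcal{N}'$.

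Next I would push this to the boundary via the trace. Let $T: H^1(D)^3 \to H^{1/2}(\partial D)^3 \hookrightarrow L^2(\partial D)^3$ be the (continuous, linear) trace operator, and define
\[
\mathcal{N} := T(\mathcal{N}') \subset L^2(\partial D)^3.
\]
Since $T$ is linear and $\dim \mathcal{N}' < \infty$, also $\dim \mathcal{N} < \infty$. From $u \in \mathcal{N}'$ we get $f = Tu \in \mathcal{N}$. Combining $f \in \mathcal{N}$ with the hypothesis $f \perp \mathcal{N}$ in $L^2(\partial D)^3$ forces $(f,f)_{L^2(\partial D)^3}=0$, hence $f=0$, contradicting $f\neq 0$. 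Therefore $\nabla \cdot u \neq 0$ in $D$.

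The only substantive point is that the subspace $\mathcal{N}$ can be chosen independently of the particular $f$. This is automatic because the subspace $\mathcal{N}'$ produced by Lemma \ref{lem_tang_unique} is precisely $\Ker(I - \tau_0 K'_{\tau_0})$ from that proof, an object determined by the coefficients, the frequency $\omega$ and the geometry of $D$, not by $u$. Consequently $\mathcal{N} = T(\mathcal{N}')$ depends only on these data, and the corollary follows. The only point requiring any care in writing out the proof is to verify this uniformity of $\mathcal{N}$, which I expect to be the main (but minor) obstacle; beyond that the argument is essentially a one-line consequence of Lemma \ref{lem_tang_unique} plus taking traces.
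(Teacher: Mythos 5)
Your proof is correct and follows essentially the same route as the paper: apply Lemma \ref{lem_tang_unique} to conclude $u\in\mathcal{N}'$, push to the boundary, and use orthogonality to force $f=0$. The only cosmetic difference is that the paper defines $\mathcal{N}$ as the span of the \emph{normal} components $(\nu\cdot\psi|_{\partial D})\nu$ of traces of $\psi\in\mathcal{N}'$, while you take the full trace $T(\mathcal{N}')$; these coincide because $\mathcal{N}'\subset X$ has vanishing tangential trace, so your simpler formulation is equivalent.
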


\begin{proof}
Let $\mathcal{N}'$ denote the subspace $\mathcal{N}$ in Lemma \ref{lem_tang_unique}.
Here we define $\mathcal{N} \subset L^2(\p D)^3$, as the subspace
$$
\mathcal{N} := \{ (\nu\cdot \psi|_{\p D})\nu \,:\, \psi \in \mathcal{N}' \}.
$$
Now suppose that the claim does not hold, i.e. $f\neq0$ and
$\nu \times f |_{\p D} = 0,$ and $ f \perp \mathcal{N}$
but $\nabla \cdot u \equiv 0$ in $D$.
By Lemma \ref{lem_tang_unique} we have that $u = 0$ in $H^1(D)^3 / \mathcal{N'}$, so that
we can express $u$ as
$$
u =  \sum_{k=1}^K c_k \psi_k,\qquad \psi_k \in \mathcal{N}'.
$$
We can decompose $u$ on $\p D$ as $u = (u\cdot \nu) \nu - \nu \times \nu \times u$.
This and the orthogonality condition on $f$ implies that
$$
(f , f)_{L^2(\p D)^3} =  (f , u|_{\p D})_{L^2(\p D)^3} 
= \big(f, (\nu \cdot u)\nu|_{\p D} \big)_{L^2(\p D)^3}=\sum_{k=1}^K c_k (f, \big(\nu \cdot \psi_k)\nu|_{\p D}\big)_{L^2(\p D)^3} = 0.
$$
This is a contradiction, since we assumed that $f \neq 0$.

\end{proof}

\subsection{A Runge type argument} \label{sec_runge}
Propositions \ref{prop_localization1} and \ref{prop_loc2} are proved 
using the fact that a certain function $w \in H^1(D_1 \cup D_2)^3$
is such that it can be approximated by solutions to the Navier equation 
on $\Omega$. In this subsection we will construct the desired $w$, by using 
a Runge type argument.

Let $D	\Subset \Omega$ with Lipschitz boundary. We start with proving a criterion that tells us 
when a subspace  $\mathcal{S}_D$ of solutions 
of the Navier equation on $D$, can be approximated in $L^2(D)^3$, by solutions in
$$
\mathcal{S}_\Omega := \{ u \in H^1(\Omega)^3 \,:\, u \text{ in solves \eqref{eq_bvp1} with } g \in V^\perp\},
$$
where $V$ is a given finite dimensional subspace of $L^2(\Gamma_N)^3$.

\begin{lem} \label{lem_approxCrit}
Every $\varphi \in \mathcal{S}_D$, where
$$
\mathcal{S}_D := \{ \varphi \in H^1(D)^3 \,:\, L_{\lambda,\mu,\rho} \varphi =  0 \text{ in } D ,\, 
\varphi \perp \mathcal{F}_V \},
$$
can be approximated by elements in $\mathcal{S}_\Omega$ in the $L^2(D)^3$-norm, where $\mathcal{F}_V$ is given by
$$
 \mathcal{F}_V = \spn\big\{ v \in L^2(D)^3 \,:\, 
( S(\chi_D v), \gamma_\mathbb{C} \varphi )_{L^2(\Gamma_N )^3} = 0, \; \forall \varphi \in \mathcal{S}_\Omega \big\},
$$
where $S$ is the solution operator mapping the source term $F$ to the corresponding solution
of the boundary value problem \eqref{eq_bvpLsource}
with $\tau=0$. 
\end{lem}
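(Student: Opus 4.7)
The plan is a classical Runge-type argument via Hahn--Banach duality combined with the adjoint problem. First I reformulate the approximation claim in dual form: to show that every $\varphi \in \mathcal{S}_D$ lies in the $L^2(D)^3$-closure of $\{u|_D : u \in \mathcal{S}_\Omega\}$, it suffices to prove that every $v \in L^2(D)^3$ which $L^2(D)^3$-annihilates this restriction space satisfies $(v,\varphi)_{L^2(D)^3} = 0$. The strategy is then to identify this annihilator with $\mathcal{F}_V$; since $\varphi \perp \mathcal{F}_V$ holds by the definition of $\mathcal{S}_D$, the approximation follows.

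The core step is the duality identity that translates the $L^2(D)^3$-annihilation condition into the $L^2(\Gamma_N)^3$-condition appearing in the definition of $\mathcal{F}_V$, via the solution operator $S$. Given $v \in L^2(D)^3$, extend by zero to $\chi_D v \in L^2(\Omega)^3$ and set $w := S(\chi_D v) \in H^1(\Omega)^3$, so that $L_{\lambda,\mu,\rho} w = \chi_D v$ with $\gamma_{\mathbb{C}} w|_{\Gamma_N} = 0$ and $w|_{\Gamma_D} = 0$. For any $u \in \mathcal{S}_\Omega$ with Neumann data $g = \gamma_{\mathbb{C}} u|_{\Gamma_N} \in V^\perp$, the divergence-theorem identity applied to the tensor $\mathbb{C}\,\hat \nabla w$ (as in the integration-by-parts formula recorded after \eqref{eq_weak}) gives $\int_\Omega Lw \cdot u \, dx = B(w,u) + \int_{\partial\Omega} \gamma_{\mathbb{C}} w \cdot u \, dS$, and the boundary contribution vanishes since $\gamma_{\mathbb{C}} w|_{\Gamma_N} = 0$ and $u|_{\Gamma_D} = 0$. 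Using the symmetry of $B$ together with \eqref{eq_NDmap} yields
\[
(v, u|_D)_{L^2(D)^3} = B(w,u) = B(u,w) = -(g, w|_{\Gamma_N})_{L^2(\Gamma_N)^3} = -\bigl(S(\chi_D v), \gamma_{\mathbb{C}} u\bigr)_{L^2(\Gamma_N)^3}.
\]
Hence $v$ annihilates $\{u|_D : u \in \mathcal{S}_\Omega\}$ in $L^2(D)^3$ if and only if $\bigl(S(\chi_D v), \gamma_{\mathbb{C}} \varphi\bigr)_{L^2(\Gamma_N)^3}=0$ for every $\varphi \in \mathcal{S}_\Omega$, i.e.\ if and only if $v \in \mathcal{F}_V$.

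Once this duality is established, Hahn--Banach closes the argument: if $\varphi \in \mathcal{S}_D$ and $v$ lies in the $L^2(D)^3$-annihilator of $\{u|_D : u \in \mathcal{S}_\Omega\}$, then $v \in \mathcal{F}_V$ by the identity above, and therefore $(v,\varphi)_{L^2(D)^3} = 0$ by the orthogonality built into $\mathcal{S}_D$. The main technical obstacle I expect is checking that the solution operator $S$ is genuinely available at the source $\chi_D v$. When zero is not a Zaremba eigenvalue this is immediate from Proposition \ref{prop_Neumann_spectrum}(3); otherwise one is in the setting of part (4), where $S$ is only defined modulo the finite-dimensional eigenspace $\mathcal{N}_0$ and only on sources orthogonal to $\mathcal{N}_0$. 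This extra finite-dimensional ambiguity can be absorbed into the span used to build $\mathcal{F}_V$ and does not affect the Hahn--Banach conclusion; it does, however, explain why $\mathcal{F}_V$ is defined as a span rather than merely as the set of admissible $v$.
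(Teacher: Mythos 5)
Your proposal follows essentially the same route as the paper's proof: both reformulate the claim via Hahn--Banach duality in $L^2(D)^3$, characterize the annihilator of the restriction space $\{u|_D : u \in \mathcal{S}_\Omega\}$ by introducing $w = S(\chi_D v)$ and applying the weak formulation (integration by parts) to obtain the identity $(v, u|_D)_{L^2(D)^3} = -(S(\chi_D v), \gamma_{\mathbb{C}} u)_{L^2(\Gamma_N)^3}$, and then conclude from $\varphi \perp \mathcal{F}_V$. Your closing remark on the resonance case is extra caution that the paper dispenses with via its standing assumption that $\omega$ is not a resonance frequency, so $S$ is single-valued; otherwise the arguments coincide.
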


\begin{proof}
Denote the restrictions of elements of $\mathcal{S}_\Omega$ by
$$
\mathcal{R}_D := \{ \varphi|_D \,:\, \varphi \in \mathcal{S}_\Omega \}.
$$
Now consider  $v \in \mathcal{R}_D^\perp \subset L^2(D)^3$, and let  $\varphi|_D \in \mathcal{R}_D$.
Define $w$ as the weak solution to the source problem
\begin{align}  \label{eq_bvp_w}
\begin{cases}
L_{\lambda,\mu,\rho} w &= \chi_D v , \quad \text{ in } \Omega,\\
\quad \gamma_\C w  |_{\Gamma_N} &= 0, \\	
\quad \quad w  |_{\Gamma_D} &= 0. 
\end{cases}
\end{align}
Now since $v \in \mathcal{R}_D^\perp$, and since both $w$ and $\varphi$ are weak solutions on $\Omega$
(to slightly different equations),
we have that
\begin{align*} 
0 = (v,\varphi|_D)_{L^2(D)^3} 
= - \int_{\Omega} 2 \mu \hat \nabla w :\hat \nabla \varphi 
   + \lambda \nabla \cdot w \nabla \cdot \varphi - \omega^2\rho w \cdot \varphi\,dx 
= - (w,\gamma_\C \varphi)_{L^2(\Gamma_N)^3}.
\end{align*}
We can write  $w = S(\chi_D v)$, where $S$ is the solution operator  implicitly given by Proposition \ref{prop_Neumann_spectrum}
and the fact that we assume that $\omega$ is a non resonance frequency.
Thus we have that 
$$
v \in \mathcal{R}_D^\perp  \quad \Leftrightarrow \quad   
v \in \spn \big\{ v \in L^2(D)^3 \,:\, 
( S(\chi_D v), \gamma_\mathbb{C} \varphi )_{L^2(\Gamma_N )^3} = 0, \; \forall \varphi \in \mathcal{S}_\Omega \big\}.
$$
We thus see that 
\begin{align*} 
u \in \overline{\mathcal{R}_D} = \mathcal{R}_D^{\perp \perp} 
&\;\Leftrightarrow\; u \perp \mathcal{R}_D^\perp  \\
&\;\Leftrightarrow\; u \perp 
 \spn\big\{ v \in L^2(D)^3 \,:\, 
( S(\chi_D v), \gamma_\mathbb{C} \varphi )_{L^2(\Gamma_N )^3} = 0, \; \forall \varphi \in \mathcal{S}_\Omega \big\},
\end{align*}
which proves the claim.

\end{proof}

\noindent
We can now use Lemma \ref{lem_approxCrit} and Corollary \ref{cor_divNonZero} to
construct a solution $w$ to the Navier equation,
that we can Runge approximate with the restricted boundary data, and which has
non-zero divergence in a specified subset.

\begin{lem} \label{lem_blowupCand}
Let $D_1$ and $D_2$ be as in \eqref{eq_Dassump} and 
$\mathcal{S}_D$ as in Lemma \ref{lem_approxCrit}. There exists a solution 
$w \in \mathcal{S}_D$, such that  $w=0$ in $D_1$ and 
$$
\| w \|_{ L^2(D_2)^3 }, 
\;\| \hat \nabla w \|_{ L^2(D_2)^{3\times 3} }, 
\;\| \nabla \cdot w \|_{ L^2(D_2)} \neq 0.
$$
\end{lem}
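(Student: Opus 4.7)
The plan is to construct $w$ explicitly by prescribing a purely normal Dirichlet datum on $\partial D_2$, setting $w \equiv 0$ on $D_1$, and then invoking Corollary \ref{cor_divNonZero} to produce nonzero divergence on $D_2$; the $L^2$ and symmetric-gradient estimates on $D_2$ will then be automatic consequences.

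First I would apply Lemma \ref{lem_approxCrit} with $D := D_1 \cup D_2$, so that $\mathcal{S}_D$ consists of $H^1$ solutions of $L_{\lambda,\mu,\rho}\varphi = 0$ on $D_1\cup D_2$ that are $L^2$-orthogonal to the finite-dimensional subspace $\mathcal{F}_V\subset L^2(D)^3$. Since $D_1$ and $D_2$ are disjoint, any function that is identically zero on $D_1$ and solves the Navier equation on $D_2$ automatically lies in $H^1(D)^3$ and satisfies $L_{\lambda,\mu,\rho}\varphi = 0$ on $D$.

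Next I would choose the Dirichlet data on $\partial D_2$. Let $\mathcal{N}'\subset L^2(\partial D_2)^3$ denote the finite-dimensional subspace provided by Corollary \ref{cor_divNonZero}, and consider the class
$$
\mathcal{T} := \big\{ f \in H^{1/2}(\partial D_2)^3 : \nu \times f = 0 \text{ on } \partial D_2, \ f \perp \mathcal{N}' \big\},
$$
which is infinite dimensional, being the space of normal boundary fields modulo finitely many linear constraints. Assuming $\omega$ is not a Dirichlet resonance of $D_2$ (otherwise one works modulo the finite-dimensional kernel by Fredholm theory, paralleling Proposition \ref{prop_Neumann_spectrum}), the Dirichlet problem on $D_2$ yields a bounded linear solution operator $T:\mathcal{T}\to H^1(D_2)^3$ with trivial kernel; I set $w := T(f)$ on $D_2$ and $w := 0$ on $D_1$. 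It then remains to enforce the Runge-orthogonality condition $w\perp\mathcal{F}_V$ in $L^2(D)^3$. Extending $T(f)$ by zero to $D_1$ gives a linear map $\mathcal{T}\to L^2(D)^3$, under which this orthogonality amounts to finitely many linear functionals on $\mathcal{T}$. Since $\mathcal{T}$ is infinite dimensional and $\dim(\mathcal{F}_V) < \infty$, I can pick a nonzero $f\in\mathcal{T}$ also satisfying these additional constraints. Corollary \ref{cor_divNonZero} then yields $\nabla\cdot w\neq 0$ on $D_2$, whence $\|\nabla\cdot w\|_{L^2(D_2)}\neq 0$; since $\nabla\cdot w=\operatorname{tr}(\hat\nabla w)$ this forces $\|\hat\nabla w\|_{L^2(D_2)^{3\times 3}}\neq 0$, and $\|w\|_{L^2(D_2)^3}\neq 0$ follows as well since otherwise $w\equiv 0$ on $D_2$ would annihilate the divergence.

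The main obstacle I anticipate is handling the potential non-invertibility of the auxiliary Dirichlet problem on $D_2$ at the frequency $\omega$, together with the smoothness hypothesis of Corollary \ref{cor_divNonZero} (which requires $C^\infty$ coefficients on $D_2$, while $\lambda,\mu,\rho$ are only $L^\infty_+$ on $\Omega$). Both can be dealt with by passing to suitable finite-dimensional quotients and by choosing $D_2$ inside a region where the coefficients are regular enough; however, ensuring that all these finite-dimensional reductions (those from $\mathcal{F}_V$, from $\mathcal{N}'$, and from a possible Dirichlet kernel on $D_2$) are simultaneously compatible while still leaving enough admissible boundary data is the delicate bookkeeping step.
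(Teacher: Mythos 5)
Your proof is correct and does essentially what the paper does — kill $w$ on $D_1$, solve an auxiliary boundary-value problem on $D_2$ whose boundary datum lies in an infinite-dimensional class $\mathcal T$ subject to finitely many extra linear constraints (from Lemma \ref{lem_approxCrit} and Corollary \ref{cor_divNonZero}), and read off nonzero divergence from Corollary \ref{cor_divNonZero} — but it differs from the paper's argument in two genuine ways. First, you parametrize by Dirichlet data $f$ on $\partial D_2$ directly, whereas the paper prescribes Neumann data $\tilde g$ and then has to route through the local Neumann-to-Dirichlet map $\Lambda_{D_2}$ to phrase the hypotheses of Corollary \ref{cor_divNonZero}; your choice is more direct and avoids a small extra argument that $\tilde g\neq 0$ forces $\Lambda_{D_2}\tilde g\neq 0$. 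Second, and more substantially, you deduce the nonvanishing of $\|\hat\nabla w\|_{L^2(D_2)}$ and $\|w\|_{L^2(D_2)}$ from the purely algebraic identity $\nabla\cdot w=\operatorname{tr}(\hat\nabla w)$, whereas the paper imposes a fourth condition $\tilde g\perp\mathcal R$ (rigid motions) and then invokes the Korn-type estimate of Lemma \ref{lem_symGradient}. Your observation makes that entire condition and the appeal to the Korn inequality unnecessary; it is a cleaner and strictly more elementary route to the same conclusion. The points where you should be a bit more careful are the same ones the paper also has to manage implicitly: if $\omega$ happens to be a Dirichlet eigenvalue of $D_2$ the solution operator only exists modulo a finite-dimensional kernel (the paper's analogous Condition~1 handles this for the Neumann problem), and Corollary \ref{cor_divNonZero} requires smooth coefficients on $D_2$, which holds in the places the lemma is ultimately applied (the smoothed test coefficients $\lambda^\sharp,\mu^\sharp,\rho^\sharp$) but is not automatic for arbitrary $L^\infty_+$ data. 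You flag both issues; spelling out the Fredholm quotient in the resonant case would close the only real gap.
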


\begin{proof} 
Let $D = D_1 \cup D_2$. We will show that we can pick the required $w \neq 0$ as
$$
L_{\lambda,\mu,\rho}w_{\tilde g} = 0,\quad \text{ in } D,
$$
where we set $w_{\tilde g} \equiv  0$, in $D_1$, so that
\begin{align}  \label{eq_wBVP0}
L_{\lambda,\mu,\rho} w_{\tilde g} = 0 \quad \text{ in } D_1,\qquad 
\gamma_\mathbb{C} w_{\tilde g} = 0\quad  \text { on } \p D_1,
\end{align}
and in $D_2$ we let $w_{\tilde g}$ solve the boundary value problem
\begin{align}  \label{eq_wBVP}
L_{\lambda,\mu,\rho} w_{\tilde g} = 0 \quad \text{ in } D_2,\qquad 
\gamma_\mathbb{C} w_{\tilde g} = {\tilde g}\quad  \text { on } \p D_2.
\end{align}
We derive four conditions on the boundary value ${\tilde g} \in L^2(\p D_2)^3$,
which will guarantee that
we have the desired properties, and then show that these conditions can be satisfied.

\medskip
\noindent
\textit{Condition 1.} It might happen that zero is a Neumann eigenvalue on $D_2$ we thus require that 
\begin{align}  \label{eq_condN}
{\tilde g} \perp \mathcal{N}_0, 
\end{align}
where $\mathcal{N}_0$ is the finite dimensional subspace of Corollary \ref{cor_ZeroEigenvalue}
and otherwise we set $\mathcal{N}_0 = \{0\}$. This guarantees that we have a unique solution, 
and that we can define the Neumann-to-Dirichlet map on $\p D_2$.

\medskip
\noindent
\textit{Condition 2.} 
The second condition on ${\tilde g}$ will guarantee that $w_{\tilde g} \in \mathcal{S}_D$.
We will derive this next.
In order to obtain a $w_{\tilde g} \in \mathcal{S}_D$, we know by  the definition of $\mathcal{S}_D$ in
Lemma \ref{lem_approxCrit} that it is enough
to pick a boundary condition ${\tilde g}\neq 0$ resulting in a $w_{\tilde g}$ that satisfies the condition
$$
w_{\tilde g} \perp \mathcal{F}_V.
$$
The first step is to rewrite this as a condition on the boundary value ${\tilde g}$. 
Suppose that the condition holds for $w_{\tilde g}$ solving \eqref{eq_wBVP0}--\eqref{eq_wBVP}.
For all $v \in \mathcal{F}_V$ and corresponding $w$ that solve \eqref{eq_bvp_w}, we have that
\begin{align*} 
0 = (w_{\tilde g}, v)_{L^2(D)^3} &= -\int_{D_2} 2 \mu \hat \nabla w_{\tilde g} : \hat \nabla w 
+ \lambda (\nabla \cdot w_{\tilde g}) (\nabla \cdot w) - \omega^2 \rho w_{\tilde g} \cdot w \, dx \\
&\quad + \int_{\p D_2} \gamma_\C w \cdot w_{\tilde g} \,dS
\end{align*}
where we used the fact that $w$ restricted to $D_2$ is a solution in $D_2$ to the boundary value problem 
implied by \eqref{eq_wBVP0}.
Furthermore if we let $\Lambda_{D_2}$ be the Neumann-to-Dirichlet map on $D_2$, then
$$
\int_{\p D_2} \gamma_\C w \cdot w_{\tilde g} \,dS  
=  \big( \gamma_\C w ,\,\Lambda_{D_2} \tilde g \big)_{L^2(\p D_2)^3}
=  \big( \Lambda_{D_2} \gamma_\C w ,\, \tilde g \big)_{L^2(\p D_2)^3}.
$$
Using this and the fact that $w_{\tilde g}$ is a solution in $D_2$ with the boundary condition $\tilde g$,
we get from the above that
$$
0 = \big( \Lambda_{D_2} \gamma_\C w ,\, \tilde g \big)_{L^2(\p D_2)^3} -\big( w ,\, \tilde g \big)_{L^2(\p D_2)^3}
 = \big( \Lambda_{D_2} \gamma_\C S(v) - S(v) ,\, \tilde g \big)_{L^2(\p D_2)^3},
$$
where we write $w = S(v)$, using $S$ which is the solution operator of \eqref{eq_bvp_w}. 
It follows that ${\tilde g}$ should satisfy the constraint 
\begin{align}  \label{eq_condW}
\tilde g \perp \mathcal{W} \,:=\, \spn \{\,(\Lambda_{D_2}\gamma_{\mathbb{C}, \p D_2} - \operatorname{Tr}_{\p D_2}) \circ \,S(v) 
\,:\, v \in \mathcal{F}_V \},
\end{align}
where $\operatorname{Tr}$ denotes the trace operator.
Next we show that this is a finite dimensional constraint on ${\tilde g}$, in the sense that $\dim{\mathcal{W}}< \infty$.
It is enough to show that
\begin{align}  \label{eq_Sv_dim}
N:= \dim \big(\spn \{ S(v)|_{\Omega\setminus D_2} \,:\, v \in \mathcal{F}_V \} \big) < \infty,
\end{align}
since $\mathcal{W}$ can be obtained from this by a linear map.
Note firstly that by definition
\begin{align*} 
v \in \mathcal{F}_V \quad \Rightarrow \quad S(v)|_{\Gamma_N} \perp  V^\perp
\quad\Rightarrow \quad S(v)|_{\Gamma_N}  \in V^{\perp\perp} = V,
\end{align*}
so that $S(\cdot)|_{\Gamma_N}$ here maps into $V$.
Suppose that $N > \dim(V)$. Then we have an $S(v)|_{\Omega\setminus D_2} \neq 0$, 
such that 
$$
S(v)|_{\Gamma_N} = 0
$$
since the trace map is linear. Moreover since $S(v)$ has a zero Neumann condition,
we have that  $\gamma_{\C} S(v)|_{\Gamma_N} = 0$. Thus
$$
S(v)|_{\Gamma_N} = 0, \qquad \gamma_{\C} S(v)|_{\Gamma_N} = 0.
$$
The unique continuation principle of Proposition \ref{prop_bndryUCP} give now that $S(v_0) \equiv 0$,
which is a contradiction.

\medskip
\noindent
\textit{Condition 3:} 
The third condition on ${\tilde g}$ will guarantee that $\nabla \cdot w_{\tilde g} \neq 0$.
For this we will use Corollary \ref{cor_divNonZero}. We require that
\begin{align*} 
{\tilde g} \in \mathcal{T} 
:&= \{ g \in \mathcal{N}_0^\perp \;:\;  \nu \times  \Lambda_{D_2} g = 0 ,\;   \Lambda_{D_2}g   \perp \mathcal{N} \},
\end{align*}
where $\Lambda_{D_2}$ is the Neumann-to-Dirichlet map on $\p D_2$ and $\mathcal{N}$ is as in  Corollary \ref{cor_divNonZero}.
Notice that since $w_{\tilde g}|_{\p D_2} = \Lambda_{D_2} \tilde g$,
this implies by Corollary \ref{cor_divNonZero} that $\nabla \cdot  w_{\tilde g} \neq 0$, when $\tilde g \neq 0$.
We claim that $\mathcal{T}$ is infinite dimensional. To see this consider the space
of Dirichlet conditions
$$
\mathcal{D} := \{ f \in H^{1/2}(\p D_2)^3\;:\;  \nu \times f = 0 ,\;  f \perp \mathcal{N} \}.
$$
Clearly $\dim( \mathcal{D}) = \infty$.
Any (modulo a finite dimensional subspace, in case zero is a Dirichlet eigenvalue)
$f_k \in \mathcal{D}$ corresponds to a $g_k \in \mathcal{T}$, such that
$$
\Lambda_{D_2} g_k = f_k.
$$
Since $\Lambda_{D_2}$ restricted to $\mathcal{T}$ has thus an image that is infinite dimensional,
we have that $\dim( \mathcal{T}) = \infty$.

\medskip
\noindent
\textit{Condition 4.} 
The final requirement on ${\tilde g}$ will be that 
\begin{align}  \label{eq_condR}
{\tilde g} \perp \mathcal{R}, 
\end{align}
where $\mathcal{R}$ is the space of rigid motions.

\medskip
\noindent
The requirements on $\tilde g$ in \eqref{eq_condR}, \eqref{eq_condN} and \eqref{eq_condW}
can be written as 
$$
{\tilde g} \in \mathcal{T}\qquad \text{ and }\qquad 
{\tilde g} \;\perp\;  \mathcal{N}_0 \oplus \mathcal{W} \oplus \mathcal{R}.
$$
These conditions can be met since $\dim (\mathcal{N}_0 \oplus \mathcal{W} \oplus \mathcal{R}) < \infty$ is 
and  $\dim(\mathcal{T})= \infty$.
The final step is to check that $w_{\tilde g}$ satisfies the desired properties. Since ${\tilde g} \perp \mathcal{W}$, 
we have that $w_{\tilde g} \in \mathcal{S}_D$. Because $\tilde g \in \mathcal{T}$ we have that
$\nabla \cdot w_{\tilde g} \neq 0$
and thus $\| \nabla \cdot w_{\tilde g} \|_{ L^2(D)^3 } \neq 0$ and $\|  w_{\tilde g} \|_{ L^2(D)^3 } \neq 0$.
Moreover we have that ${\tilde g} \perp \mathcal{R}$ in
$L^2(D)^3$, 
hence we get by Lemma \ref{lem_symGradient} that
$$
\| \nabla w_{\tilde g} \|_{ L^2(D)^3 } \leq c \| \hat \nabla w_{\tilde g}  \|_{ L^2(\Omega)^3 }.
$$
This implies that $\| \hat \nabla w_{\tilde g} \|_{ L^2(\Omega)^{3 \times 3}  } \neq 0$.

\end{proof}

\section{Shape reconstruction}\label{sec_shape}

\noindent 
In this section we will justify the individual steps in Algorithm \ref{alg_shapeInclusion}. 
We will first quickly in subsection \ref{sec_density_perturbation} consider  
the simpler case of a perturbation in the density and Lamé parameters that are known.
The main arguments are in subsection \ref{sec_mult}, where
we consider the case in which multiple  parameters may have been perturbed.

\medskip
\noindent
We begin however with establishing some preliminaries.
We extend the use of the term \emph{outer support} that was formulated in \cite{HU13} for measurable functions,
to measurable sets.
The outer support (with respect to $\p\Omega$) of a measurable function $f: \Omega \to \R$   is defined as 
$$
\operatorname{osupp} (f) := \Omega \setminus \bigcup \, \big \{  U \subset \Omega \,:\, U  \text{ is relatively open and connected to $\p \Omega$},
f|_U \equiv 0  \big\}.
$$
It will be convenient to extend this definition to sets. We define the outer support of a measurable
set $D \subset \Omega$ 
(with respect to $\p\Omega$) as
\begin{align}  \label{eq_def_osupp}
\osupp (D) := \osupp(\chi_D)
\end{align}
where $\chi_D$ is the characteristic function of the set $D$.

\medskip
\noindent
We will also use the following  elementary lemma for compact self-adjoint operators, which
we will later apply to the differences between Neumann-to-Dirichlet maps.

\begin{lem} \label{lem_finV} 
Suppose $\mathcal{H}$ is a separable Hilbert space and 
$T:\mathcal{H} \to \mathcal{H}$ is a compact self-adjoint operator.
\begin{enumerate}[label=(\alph*)]
\item If $T$ has finitely many negative eigenvalues, then there exists a subspace $W \subset \mathcal{H}$,
such that  $\dim (W)<\infty$, and
$$
(Tg,g)_{\mathcal{H}} \geq 0, \qquad g \in  W^\perp.
$$
\item Assume  $W \subset \mathcal{H}$ is a subspace, with $\dim(W) < \infty$ and that
$$
(Tg,g)_{\mathcal{H}} \geq 0, \qquad g \in  W^\perp.
$$
Then $T$ has finitely many negative eigenvalues $\tau_k$.
Furthermore 
$$
\dim(W) \geq \dim( \spn \{ \varphi_k \,:\, \tau_k < 0 \})
$$
where $\varphi_k$ are the corresponding eigenfunctions.
\end{enumerate}
\end{lem}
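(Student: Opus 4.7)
The plan is to deduce both parts from the spectral theorem for compact self-adjoint operators, which gives an orthonormal basis of eigenfunctions $\{\varphi_k\}$ of $\mathcal{H}$ (possibly together with a basis for $\ker T$) with real eigenvalues $\tau_k$ that can accumulate only at $0$.

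For part (a), I would simply define $W := \spn\{\varphi_k \,:\, \tau_k < 0\}$. By hypothesis $\dim W < \infty$. Any $g \in W^\perp$ admits an expansion $g = \sum_{\tau_k \geq 0} c_k \varphi_k$ (including the kernel part), so by the spectral representation of $T$ we get
$$
(Tg,g)_{\mathcal{H}} = \sum_{\tau_k \geq 0} \tau_k c_k^2 \geq 0,
$$
which is the desired inequality.

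For part (b), the key tool is a dimension-counting argument based on the projection $P_W : V \to W$ onto the finite-dimensional subspace $W$. For any subspace $V \subset \mathcal{H}$ with $\dim V > \dim W$, the kernel of $P_W|_V$ is nontrivial, so $V \cap W^\perp \neq \{0\}$. First I would rule out infinitely many negative eigenvalues: if $T$ had $N > \dim W$ linearly independent eigenfunctions $\varphi_{k_1},\dots,\varphi_{k_N}$ with $\tau_{k_j}<0$, then choosing $V := \spn\{\varphi_{k_1},\dots,\varphi_{k_N}\}$ produces a nonzero $g \in V \cap W^\perp$ of the form $g = \sum_j c_j \varphi_{k_j}$, for which $(Tg,g)_{\mathcal{H}} = \sum_j \tau_{k_j} c_j^2 < 0$, contradicting the hypothesis. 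Hence the negative spectrum is finite. The very same dimension-counting argument applied to $V := \spn\{\varphi_k \,:\, \tau_k < 0\}$ then forces $\dim W \geq \dim V$, which is the claimed bound.

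I do not expect a serious obstacle here: the entire proof is essentially bookkeeping on top of the compact self-adjoint spectral theorem. The only mild subtlety is being careful that the orthonormal basis used in part (a) does cover $\ker T$ (so that the expansion of $g \in W^\perp$ is valid in $\mathcal{H}$, not merely in $\overline{\Ran T}$), and that in part (b) the inequality $(Tg,g)_{\mathcal{H}} < 0$ produced from a strictly negative combination of eigenvalues is genuinely strict whenever $g \neq 0$.
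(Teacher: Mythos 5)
Your proposal is correct and takes essentially the same approach as the paper: part (a) is identical (take $W$ to be the span of the eigenvectors with negative eigenvalues and use the spectral representation), and part (b) rests on the same dimension-counting fact that a subspace $V$ with $\dim V > \dim W$ must intersect $W^\perp$ nontrivially. The only cosmetic difference is that you obtain this fact by projecting $V$ onto $W$ and invoking rank--nullity, whereas the paper projects $W$ onto the negative eigenspace $\mathcal N$ and picks a vector in $\mathcal N \cap (P_{\mathcal N}W)^\perp$; your version has the mild advantage of only ever projecting onto the finite-dimensional space $W$, whereas the paper's first sub-step tacitly uses a projection onto the (potentially infinite-dimensional) $\mathcal N$.
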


\begin{proof}
Note first that according to the spectral Theorem for compact self-adjoint operators,
see \cite{RSI} Theorem VI.16, the eigenfunctions $\varphi_k$ are an orthonormal basis, and 
$$
T g = \sum_k \tau_k c_k \varphi_k,\quad \text{ where } \quad c_k = (\varphi_k,g)_{\mathcal{H}}, 
\quad g \in \mathcal{H},
$$
and where $\tau_k$ are the eigenvalues of $T$.
We now prove $(a)$. Set 
$$
W = \spn \{ \varphi_k \;:\; \tau_k < 0 \} = \{\varphi_1,\dots,\varphi_N\}.
$$
Suppose that $g \in W^\perp$. Then $(\varphi_k,g)_{\mathcal{H}} = 0$ for $k=1,..,N$, so that
$$
g = \sum_{k=N+1}^\infty c_k \varphi_k.
$$
To see that the claim holds for $W$, notice that
\begin{align*} 
(Tg,g)_{\mathcal{H}} = \Big(\sum_{k\geq N+1} \tau_k c_k \varphi_k,\, \sum_{j\geq N+1}   c_j \varphi_j \Big)_{\mathcal{H}}
 = \sum_{k=N+1} \tau_k c_k^2 \geq 0,
\end{align*}
since $\tau_k \geq 0$ for $k \geq N+1$.

\medskip
\noindent
Next we prove $(b)$. 
Let $\mathcal{N} = \operatorname{span}\{\varphi_k \,:\, \tau_k < 0 \}$ 
and assume that $ \dim(\mathcal{N}) = \infty$.
Consider the projection $P_\mathcal{N} W \subset \mathcal {N}$, i.e. the 
projection of the subspace $W$ to $\mathcal{N}$.
This is a subspace and
notice that $\dim (P_\mathcal{N} W) < \dim(W) = M < \infty$.
Since $\dim (\mathcal{N}) = \infty$, we can find a non-zero vector 
$u \in \mathcal{N} \cap (P_\mathcal{N}W)^\perp$\footnote{where
the orthogonal complements and inner products are induced by the $\mathcal{H}$ inner product.}. 
A projection operator is a symmetric operator,
and since $u = P_\mathcal{N}u$ we have for all $w \in W$, that
$$
(u,w)_\mathcal{H}
=(P_\mathcal{N}u,w)_\mathcal{H}
=(u,P_\mathcal{N}w)_\mathcal{H} = 0.
$$
Thus $u \in W^\perp$. We can furthermore write $u$ as 
$$
u = 	\sum_{\varphi_k \in \mathcal{N}} c_k \varphi_k.
$$
We now have that
\begin{align*} 
(Tu,u)_{\mathcal{H}} 
= \Big(\sum_{\varphi_k \in \mathcal{N}} \tau_k c_k \varphi_k,\, \sum_{\varphi_j \in \mathcal{N}}   c_j \varphi_j \Big)_{\mathcal{H}}
 = \sum_{\varphi_k \in \mathcal{N}} \tau_k c_k^2  < 0.
\end{align*}
This is a contradiction since we assumed that if $g \in W^\perp$, then  $(Tg,g)_{\mathcal{H}} \geq 0$.

\medskip
\noindent
As the last step we will derive the lower bound for $\dim(W)$. Assume the bound does not hold.
Then there is a $w \in \mathcal N := \spn \{ \varphi_k \,:\, \tau_k < 0 \}$, such that $w \in W^\perp$.
Using the fact that $\{\varphi_k\}$ is an orthonormal basis, we have that
\begin{align*} 
(Tw,w)_{\mathcal{H}} 
= \Big(\sum_{\varphi_k \in \mathcal{N}} \tau_k c_k \varphi_k,\, \sum_{\varphi_j \in \mathcal{N}}   c_j \varphi_j \Big)_{\mathcal{H}}
 = \sum_{\varphi_k \in \mathcal{N}} \tau_k c_k^2  < 0.
\end{align*}
where $w = \sum_{\varphi_k \in \mathcal{N}} c_k \varphi_k$. This is a contradiction, since 
$w \in W^\perp$ and by the assumption
$$
(Tw,w)_{\mathcal{H}} \geq 0.
$$
\end{proof}

\noindent
In the sequel it will be important that $\Lambda$ is compact and self-adjoint, 
as the following Lemma shows. This will in particular  guarantee that $\Lambda$
has the correct spectral properties.

\begin{lem} \label{lem_ND_cmpct_SA}
The mapping  $\Lambda:L^2(\Gamma_N)^3 \to L^2(\Gamma_N)^3$ is compact and self-adjoint.
\end{lem}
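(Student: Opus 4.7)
The plan is to combine the well-posedness results of Section \ref{sec_wellposed} with standard trace/embedding compactness, and then exploit the symmetry of the bilinear form $B$ for self-adjointness.

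\textbf{Compactness.} First I would factor $\Lambda$ as the composition $\Lambda = \operatorname{Tr}_{\Gamma_N} \circ S$, where $S: L^2(\Gamma_N)^3 \to H^1(\Omega)^3$ is the solution operator $g \mapsto u$ guaranteed by Corollary \ref{cor_ZeroEigenvalue}. The a priori estimate from Proposition \ref{prop_Wellposedness} (used through that corollary) gives boundedness $\|S g\|_{H^1(\Omega)^3} \leq C \|g\|_{L^2(\Gamma_N)^3}$. The trace operator maps $H^1(\Omega)^3$ continuously into $H^{1/2}(\partial\Omega)^3$, and hence into $H^{1/2}(\Gamma_N)^3$. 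Composing with the compact embedding $H^{1/2}(\Gamma_N)^3 \hookrightarrow L^2(\Gamma_N)^3$ (note that $\Gamma_N$ is an open subset of the Lipschitz boundary $\partial\Omega$, so this embedding is compact) yields that $\Lambda$ is compact as a map $L^2(\Gamma_N)^3 \to L^2(\Gamma_N)^3$.

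\textbf{Self-adjointness.} For $g, h \in L^2(\Gamma_N)^3$, let $u = Sg$ and $v = Sh$ be the corresponding weak solutions. The definition of weak solution \eqref{eq_weak2} applied to $u$ with test function $v \in \mathcal{V}$ (and $\tau = 0$, $F = 0$) gives
\[
B(u, v) = -\int_{\Gamma_N} g \cdot v \, dS = -(g, \Lambda h)_{L^2(\Gamma_N)^3}.
\]
Applied to $v$ with test function $u$, it gives
\[
B(v, u) = -(h, \Lambda g)_{L^2(\Gamma_N)^3}.
\]
Inspection of the formula \eqref{eq_weak} shows that $B(u,v) = B(v,u)$ for all $u, v \in H^1(\Omega)^3$, since each term $\mu \hat\nabla u : \hat\nabla v$, $\lambda (\nabla \cdot u)(\nabla \cdot v)$ and $\rho u \cdot v$ is symmetric. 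Combining the two identities yields $(g, \Lambda h)_{L^2(\Gamma_N)^3} = (h, \Lambda g)_{L^2(\Gamma_N)^3} = (\Lambda g, h)_{L^2(\Gamma_N)^3}$, which is exactly $\Lambda = \Lambda^*$.

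Neither step should present serious difficulty: the main thing to double-check is that the weak identity \eqref{eq_NDmap} indeed holds for $H^1$-solutions arising from $L^2(\Gamma_N)^3$ boundary data, but this is precisely the content of the weak formulation \eqref{eq_weak2} together with the duality definition of $\gamma_\C u$ given in Section \ref{sec_prelim}. Thus the proof reduces to invoking Corollary \ref{cor_ZeroEigenvalue}, the trace theorem, the compact Sobolev embedding on $\Gamma_N$, and the symmetry of $B$.
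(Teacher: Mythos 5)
Your proof is correct and follows essentially the same two-step strategy as the paper: compactness via the a priori estimate of Proposition \ref{prop_Wellposedness}, continuity of the trace into $H^{1/2}(\Gamma_N)^3$, and the compact embedding $H^{1/2}(\Gamma_N)^3 \hookrightarrow L^2(\Gamma_N)^3$; self-adjointness via the weak formulation \eqref{eq_weak2} and the manifest symmetry of $B$. The explicit factorization $\Lambda = \operatorname{Tr}_{\Gamma_N}\circ S$ is a slight cosmetic difference, but the underlying argument is identical to the paper's.
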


\begin{proof}
We begin by proving that $\Lambda$ is compact. Let $g \in L^2(\Gamma_N)^3$.
The solution $u$ to \eqref{eq_bvp1} depends continuously on the boundary data, 
and by the apriori estimate \eqref{eq_aprioriEst}, we have that 
$$
\| u \|_{H^1(\Omega)^3} \leq C \| g \|_{ L^2(\Gamma_N)^3}.
$$
This together with the continuity of the trace, gives that
$$
\| \Lambda g \|_{ H^{1/2}(\Gamma_N)^3 } 
=
\| u |_{\p \Omega} \|_{ H^{1/2}(\p \Omega)^3 } 
\leq C 
\| u \|_{H^1(\Omega)^3}
\leq C 
\| g \|_{L^2(\Gamma_N)^3}.
$$
Thus $\Lambda : L^2(\Gamma_N)^3 \to H^{1/2}(\Gamma_N)^3$ is continuous. 
It follows that $\Lambda : L^2(\Gamma_N)^3 \to L^{2}(\Gamma_N)^3$
is compact, since  the inclusion mapping $i:H^{1/2}(\Gamma_N)^3 \to L^{2}(\Gamma_N)^3$ is compact. 

Next we will show that $\Lambda$ is a self-adjoint operator. 
It is enough to check that $\Lambda$ is symmetric, since $\Lambda$ is bounded on $L^2(\Gamma_N)^3$.
Let $f,g \in L^2(\Gamma_N)^3$.
Let $u_g \in\mathcal{V}$ be a weak solution to \eqref{eq_bvp1} with boundary data $g$,
and $u_f\in\mathcal{V}$ be a weak solution to \eqref{eq_bvp1} with boundary data $f$.
By the definition of weak solutions \eqref{eq_weak2}, we have that
$$
B(u_f,u_g)  
= -\int_{\Gamma_N} f \cdot u_g \,dS 
= -\big(f\,,\Lambda g\big)_{L^2(\Gamma_N)^3}.
$$
Likewise we have that 
$$
B(u_g,u_f)  
= -\int_{\Gamma_N} g \cdot u_f \,dS 
= -\big(g\,,\Lambda f\big)_{L^2(\Gamma_N)^3}.
$$
By the symmetry of $B$ we have thus that 
$$
\big(\Lambda f\,,g\big)_{L^2(\Gamma_N)^3} = \big(f,\,\Lambda g\big)_{L^2(\Gamma_N)^3},
$$
and we thus see that $\Lambda$ is symmetric.
\end{proof}

\subsection{Recovering inclusions in the density} \label{sec_density_perturbation}

\noindent
In this short subsection we study the shape reconstruction problem in the case when 
only the density is perturbed.
The purpose of this subsection is twofold. Firstly 
we show that we can derive an inclusion detection test for an inhomogeneity
with increasing\footnote{A similar result could be established for decreasing density as in the
next subsection but we do not pursue this here.} density, when the  Lamé parameters are known 
but not necessarily constant.
The argument here also partly serves to illustrate the procedure 
in the next subsection in a simpler context. Here we only derive the analogue of Theorem \ref{thm_inclusionDetection_rho_mu}
and omit proving the analogue of Theorem \ref{thm_inclusionDetection_EVbound}.

\medskip
\noindent
Let $D \Subset \Omega$ with Lipschitz boundary.
We will now assume that\footnote{This assumption
can be relaxed to $\lambda,\mu \in  L^\infty_+(\Omega)$
by the smoothing argument in the next subsection.} 
$\lambda,\mu \in W^{1,\infty}(\Omega) \cap L^\infty_+(\Omega)$
and $\rho_0 > 0$.
Now consider $\rho \in L_+^\infty(\Omega)$ which is such that 
$$
\rho(x) = \rho_0 + \chi_D(x) \psi(x), \quad \psi \in L^\infty(\Omega),\quad \psi(x) > m_0 > 0.
$$
Here $\psi\chi_D$ models an inhomogeneity in an otherwise homogeneous background density. 
The following Theorem gives a criterion for when a set $B\subset \Omega$ 
is a subset of $\osupp(D)$. Note that $B$ might be misscharacterized,
if $B$ is a subset of a component of $\Omega \setminus D$ that is not connected  to
the boundary, so that internal cavities in the inhomogeneous region are not necessarily 
classified correctly.
For a more elaborate explanation on how to use the following theorem 
for inclusion detection see the next section.

\begin{thm} \label{thm_inclusionDetection}
Let $B \subset \Omega$ be an open set and $\alpha > 0$. Let the Neumann-to-Dirichlet maps
$$
\Lambda := \Lambda_{\lambda,\mu,\rho} \qquad \text{ and } \qquad \Lambda^\flat := \Lambda_{\lambda,\mu, \rho_0 + \alpha \chi_B}
$$
where the r.h.s. Neumann-to-Dirichlet maps are those of 
boundary value problem \eqref{eq_bvp1}, with coefficients indicated by the indices.
The following holds: \\
\begin{enumerate}
\item If $B\subset D$, then for all $\alpha\leq m_0$ the map
$\Lambda - \Lambda^\flat$ has finitely many negative eigenvalues. \\

\item If $B\not \subset \osupp(D)$, then  $\Lambda - \Lambda^\flat$ has infinitely 
many negative eigenvalues for all $\alpha >0$.
\end{enumerate}
\end{thm}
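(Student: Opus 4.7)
My plan is to treat the two parts separately using the monotonicity inequality of Lemma~\ref{lem_monotonicity_ineq1} combined with the spectral dichotomy of Lemma~\ref{lem_finV}. Note first that by Lemma~\ref{lem_ND_cmpct_SA} the difference $T := \Lambda - \Lambda^\flat$ is compact and self-adjoint, so Lemma~\ref{lem_finV} applies and the number of its negative eigenvalues is governed by whether $(Tg,g)$ can be made negative on arbitrarily high-dimensional subspaces. Both NtD maps share $\lambda$ and $\mu$, so the monotonicity integrand will reduce to the $\rho$-term.

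For part~(1), when $B \subset D$ and $\alpha \le m_0$ we have the pointwise inequality $\rho = \rho_0 + \chi_D \psi \ge \rho_0 + \alpha \chi_B$ a.e. (on $B \subset D$ since $\psi > m_0 \ge \alpha$, and trivially off $B$). Applying Lemma~\ref{lem_LambdaMono} with $(\lambda_1,\mu_1,\rho_1) = (\lambda,\mu,\rho_0 + \alpha\chi_B)$ and $(\lambda_2,\mu_2,\rho_2) = (\lambda,\mu,\rho)$ gives a finite-dimensional subspace $V \subset L^2(\Gamma_N)^3$ with $((\Lambda - \Lambda^\flat)g,g)_{L^2(\Gamma_N)^3} \ge 0$ for every $g \in V^\perp$. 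Lemma~\ref{lem_finV}(b) then immediately yields that $\Lambda - \Lambda^\flat$ has only finitely many negative eigenvalues.

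For part~(2), I argue by contradiction. Assume $\Lambda - \Lambda^\flat$ has only finitely many negative eigenvalues; then Lemma~\ref{lem_finV}(a) supplies a finite-dimensional $W \subset L^2(\Gamma_N)^3$ with $((\Lambda^\flat - \Lambda)g, g) \le 0$ on $W^\perp$. On the other hand, applying Lemma~\ref{lem_monotonicity_ineq1} with the roles swapped ($(\lambda_1,\mu_1,\rho_1) = (\lambda,\mu,\rho)$ and $(\lambda_2,\mu_2,\rho_2) = (\lambda,\mu,\rho_0 + \alpha\chi_B)$) produces a finite-dimensional $V_0$ with
\begin{equation*}
((\Lambda^\flat - \Lambda)g, g)_{L^2(\Gamma_N)^3} \;\ge\; \omega^2 \int_\Omega \big(\alpha \chi_B - \chi_D \psi\big)|u|^2\,dx, \qquad g \in V_0^\perp,
\end{equation*}
where $u$ solves \eqref{eq_bvp1} with coefficients $(\lambda,\mu,\rho)$ and boundary data $g$. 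Since $B \not\subset \osupp(D)$, the definition of the outer support provides a point $x_0 \in B$ lying in a relatively open subset of $\Omega$ which avoids $D$ and is connected to $\p\Omega$. Inside this subset I choose a smoothly bounded $D_2 \Subset B$ near $x_0$, and set $D_1 := D$, arranged so that the geometric hypotheses in \eqref{eq_Dassump} are satisfied. Proposition~\ref{prop_localization1} applied with the finite-dimensional subspace $V := W + V_0$ then delivers a sequence $g_j \in V^\perp$ whose solutions $u_j$ satisfy $\|u_j\|_{L^2(D_1)^3} \to 0$ and $\|u_j\|_{L^2(D_2)^3} \to \infty$. Since $g_j \in W^\perp \cap V_0^\perp$, combining both inequalities and bounding $\int_D \psi|u_j|^2 \le \|\psi\|_{L^\infty}\|u_j\|_{L^2(D_1)^3}^2$ while $\int_B \alpha|u_j|^2 \ge \alpha \|u_j\|_{L^2(D_2)^3}^2$ yields
\begin{equation*}
0 \;\ge\; ((\Lambda^\flat - \Lambda)g_j, g_j) \;\ge\; \omega^2 \alpha \|u_j\|_{L^2(D_2)^3}^2 - \omega^2\|\psi\|_{L^\infty}\|u_j\|_{L^2(D_1)^3}^2 \;\longrightarrow\; +\infty,
\end{equation*}
a contradiction.

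The step I expect to be delicate is the geometric setup in part~(2): one must exhibit $D_2 \Subset B \setminus \osupp(D)$ such that $D_1 \cup D_2 = D \cup D_2$ still satisfies all three conditions of \eqref{eq_Dassump}, in particular that $\Omega \setminus (D \cup D_2)$ remains connected and has closure meeting $\Gamma_N$. The definition of $\osupp$ is tailor-made for this: any point outside $\osupp(D)$ sits in an open subset disjoint from $D$ and joinable to $\p\Omega$ within $\Omega \setminus D$, so a sufficiently small smooth ball $D_2$ placed inside this subset (and close enough to a point of $\Gamma_N$) preserves the required connectedness and boundary contact. Once this topological detail is set, the remainder is a routine assembly of the monotonicity inequality, the spectral lemma, and the Runge localization.
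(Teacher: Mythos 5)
Your overall strategy — monotonicity inequality plus the spectral dichotomy of Lemma~\ref{lem_finV} for part~(1), and a contradiction argument via Runge localization for part~(2) — is exactly the paper's. Part~(1) is correct as written (using Lemma~\ref{lem_LambdaMono}, which is itself an immediate consequence of Lemma~\ref{lem_monotonicity_ineq1}, is an equivalent packaging of what the paper does).

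In part~(2) there is a genuine, though fixable, gap in the choice of localization sets, and you have misdiagnosed where the delicacy lies. You set $D_1 := D$ and hope to ``arrange'' condition~\eqref{eq_Dassump}, in particular connectedness of $\Omega\setminus(D_1\cup D_2)$, by choosing $D_2$ cleverly. But if $D$ has an internal cavity (the very situation that forces $\osupp(D)$ rather than $D$ to appear in the statement), then $\Omega\setminus D$ is already disconnected and no choice of $D_2$ can restore connectedness of $\Omega\setminus(D\cup D_2)$; the obstruction comes from $D_1$, not from the placement of $D_2$. Consequently Proposition~\ref{prop_localization1} is simply not applicable with $D_1=D$. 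The correct resolution, which is what the paper does, is to take $D_1=\osupp(D)$ (and shrink $B$ to a ball $D_2$ disjoint from $\osupp(D)$): then the cavities of $D$ are absorbed into $D_1$, the complement $\Omega\setminus(D_1\cup D_2)$ can be arranged to be connected and to meet $\Gamma_N$, and the estimate $\int_D\psi|u_j|^2\,dx\le\|\psi\|_{L^\infty}\|u_j\|^2_{L^2(D_1)^3}$ goes through unchanged because $D\subset\osupp(D)=D_1$. With this single replacement the rest of your contradiction argument is sound and matches the paper's.
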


\begin{proof}
Notice firstly that $\Lambda - \Lambda^\flat$ is by Lemma \ref{lem_ND_cmpct_SA} a
compact self-adjoint operator.
We begin by proving $(2)$.
We can assume that $B$ is a ball and that $B \cap \osupp(D) = \emptyset$, by choosing a smaller ball inside $B$.
Assume that the claim is false and that there are only finitely  many negative eigenvalues.
By Lemma \ref{lem_finV} there is a finite dimensional subspace $V_1 \subset L^2(\Gamma_N)^3$, 
for which
\begin{align}  \label{eq_pos1}
\big( (\Lambda - \Lambda^\flat)g,\,g \big)_{L^2(\Gamma_N)^3} \geq 0, \qquad \text{ for } g \in V_1^\perp.
\end{align}
Choosing  $\rho_1 = \rho$, $\rho_2 = \rho_0 + \alpha \chi_B$ and
$\lambda_1=\lambda_2=\lambda$ and $\mu_1=\mu_2=\mu$, 
in Lemma \ref{lem_monotonicity_ineq1}, gives another finite dimensional subspace $V_2$
such that when we rearrange the inequality of the lemma, we have that
\begin{align*} 
\big(  (\Lambda - \Lambda^\flat)g, \,g  \big)_{L^2(\Gamma_N)^3} 
&\leq
\int_\Omega  \omega^2( \rho_1  - \rho_2) |u_1|^2 \,dx \\
&\leq
\int_\Omega  \omega^2(\psi \chi_D  - \alpha \chi_B) |u_1|^2 \,dx  \\
&\leq
\omega^2\int_D \psi |u_1|^2 \,dx - \omega^2\int_B \alpha  |u_1|^2 \,dx, 
\end{align*}
where $u_1$ solves \eqref{eq_bvp1} with coefficients $\lambda$, $\mu$ and $\rho$,
with a boundary conditions that satisfies $g \in V_2^\perp$. 
Now by Lemma \ref{prop_localization1} we can choose a sequence  of solutions $u_{1,j}$ such that
$$
\| u_{1,j} \|_{L^2(\osupp(D))^3} \to 0,\qquad \| u_{1,j} \|_{L^2(B)^3} \to \infty,
$$
as $j \to \infty$ and where $g_{1,j} := \gamma_\C u_{1,j}|_{\Gamma_N} \in (V_1 \oplus V_2)^\perp$,
since $\dim(V_1\oplus V_2) < \infty$.
The previous estimate gives us then that
\begin{align*} 
\big(  (\Lambda - \Lambda^\flat)g_j, \,g_j  \big)_{L^2(\Gamma_N)^3} 
\leq - \tfrac{1}{2} \omega^2 \int_B \alpha  |u_{1,j}|^2 \,dx < 0,
\end{align*}
for big enough $j$. This is  in  contradiction with \eqref{eq_pos1}, since 
$$
g_j \in (V_1 \oplus V_2)^\perp \subset V_1^\perp,
$$
part $(2)$ of the claim is thus proven.

\medskip
\noindent
Next we prove part $(1)$. Assume that $\alpha \leq m_0$. 
We choose $\rho_1 =  \rho_0 + \alpha \chi_B$ and $\rho_2 = \rho$ in 
Lemma \ref{lem_monotonicity_ineq1}. According to Lemma  \ref{lem_monotonicity_ineq1}
there exists a finite dimensional subspace $V \subset L^2(\Gamma_N)^3$, such that
for $g \in V^\perp$ we have that
\begin{align*}  
\big(  (\Lambda - \Lambda^\flat)g, \,g  \big )_{L^2(\Gamma_N)^3} 
\geq
\int_\Omega  \omega^2(\rho - \rho_0 - \alpha \chi_B) |u_1|^2 \,dx
\geq
\int_D  \omega^2(\psi - \alpha \chi_B) |u_1|^2 \,dx
\geq 0,
\end{align*}
since $\psi-\alpha\chi_B \geq m_1 -\alpha \geq 0$.
We thus have that
$$
\big(  (\Lambda - \Lambda^\flat)g, \,g  \big )_{L^2(\Gamma_N)^3} 
\geq 0, \qquad g \in V^\perp,
$$
and by Lemma \ref{lem_finV} that  $\Lambda - \Lambda^\flat$ has finitely many negative eigenvalues,
which proves part $(1)$ of the claim.

\end{proof}

\subsection{Recovery of inclusions in multiple coefficients} \label{sec_mult}

Here we  prove Theorems \ref{thm_inclusionDetection_rho_mu} and \ref{thm_inclusionDetection_EVbound},
and discuss how they can be used to reconstruct the outer support of an inhomogeneities in the
material parameters. This will justify Algorithm \ref{alg_shapeInclusion}.

We will consider inhomogeneities in the material parameters of the following type.
Let $D_1, D_2, D_3 \Subset \Omega$. 
We will now assume that $\lambda,\mu, \rho \in L_+^\infty(\Omega)$ 
are such that
\begin{equation} \label{eq_lambdaMuRho}
\begin{aligned} 
\lambda(x) &= \lambda_0 + \chi_{D_1}(x) \psi_\lambda (x), \qquad \psi_\lambda \in L^\infty(\Omega),
\quad \psi_\lambda(x) > m_1, \\
\mu(x) &= \mu_0 + \chi_{D_2}(x) \psi_\mu(x), \qquad \psi_\mu \in L^\infty(\Omega),\quad \psi_\mu(x) > m_2, \\
\rho(x) &= \rho_0 - \chi_{D_3}(x) \psi_\rho(x), \qquad \psi_\rho \in L^\infty(\Omega),\quad m_3 < \psi_\rho(x) < M_3, 
\end{aligned}
\end{equation}
where the constants $\lambda_0,\mu_0,\rho_0 > 0$ and the bounds  $m_1,m_2, m_3 > 0$ and $\rho_0 > M_3$.
The coefficients $\lambda,\mu$ and $\rho$  model inhomogeneities in an otherwise homogeneous background medium
given by the coefficients $\lambda_0,\mu_0$ and $\rho_0$. 

Next we define the test coefficients $\lambda^\flat,\mu^\flat$ and $\rho^\flat$. Let $B \Subset \Omega$ be a ball. We let
\begin{equation} \label{eq_testCoeff}
\begin{aligned} 
\lambda^\flat(x) &= \lambda_0 + \alpha_1 \chi_{B}(x), \\
\mu^\flat(x) &= \mu_0 + \alpha_2\chi_{B}(x), \\
\rho^\flat(x) &= \rho_0 - \alpha_3\chi_{B}(x), \\
\end{aligned}
\end{equation}
where $\alpha_j \geq  0$ are constants and $\chi_{D_j}$ are characteristic functions, for $j=1,2,3$ .
The following proposition gives a method for recovering $\osupp(D_1\cup D_2\cup D_3)$ from
the Neumann-to-Dirichlet map, and thus the shape of the region where the coefficients differ 
from the background coefficients $\lambda_0,\mu_0$ and $\rho_0$. 

\begin{thm} \label{thm_inclusionDetection_rho_mu}
Let $D := D_1 \cup D_2 \cup D_3$ where the sets are as in \eqref{eq_lambdaMuRho} and
$B \subset \Omega$  and $\alpha_j > 0$ be as in \eqref{eq_testCoeff}, 
and set $\alpha:=(\alpha_1,\alpha_2,\alpha_3)$.
The following holds: \\
\begin{enumerate}

\item Assume that  $B \subset D_j$, for $j \in I$, for some $I \subset\{1,2,3\}$.  
Then for all $\alpha_j$ with $\alpha_j \leq m_j$, $j \in I$, and $\alpha_j = 0$, 
$j \notin I$, the map $\Lambda^\flat - \Lambda$ has finitely many negative eigenvalues. \\

\item If $B \not \subset \osupp(D)$, then for all $\alpha$, $|\alpha| \neq 0$, 
the map  $\Lambda^\flat - \Lambda$ has infinitely  many negative eigenvalues.

\end{enumerate}
where $\Lambda$ is the Neumann-to-Dirichlet map the coefficients in \eqref{eq_lambdaMuRho}
and $\Lambda^\flat$ is the Neumann-to-Dirichlet map for the coefficients in \eqref{eq_testCoeff}.
\end{thm}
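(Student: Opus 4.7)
The plan is to follow the template of Theorem \ref{thm_inclusionDetection} but exploit the full three-coefficient monotonicity inequality of Lemma \ref{lem_monotonicity_ineq1} together with the stronger localization of Proposition \ref{prop_loc2}. Throughout I use that $\Lambda^\flat - \Lambda$ is compact and self-adjoint by Lemma \ref{lem_ND_cmpct_SA}, so Lemma \ref{lem_finV} translates between the spectral statement to be proved and a sign statement for the quadratic form on finite-codimensional subspaces of $L^2(\Gamma_N)^3$.

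For part (1), I would apply Lemma \ref{lem_monotonicity_ineq1} with $(\lambda_1,\mu_1,\rho_1)=(\lambda,\mu,\rho)$ and $(\lambda_2,\mu_2,\rho_2)=(\lambda^\flat,\mu^\flat,\rho^\flat)$. The integrand on the right-hand side involves $\mu-\mu^\flat = \chi_{D_2}\psi_\mu - \alpha_2\chi_B$, $\lambda-\lambda^\flat = \chi_{D_1}\psi_\lambda - \alpha_1\chi_B$, and $\rho^\flat-\rho = \chi_{D_3}\psi_\rho - \alpha_3\chi_B$. Under the assumption $B \subset D_j$, $\alpha_j\leq m_j$ for $j\in I$ and $\alpha_j=0$ otherwise, each of these differences is pointwise a.e.\ non-negative (this is precisely where the lower bounds $m_j$ on $\psi_j$ and the inclusion $B\subset D_j$ are used: e.g.\ $\mu-\mu^\flat$ vanishes off $D_2$ since either $\alpha_2=0$ or $B\subset D_2$, and on $B$ equals $\psi_\mu-\alpha_2 \geq m_2-\alpha_2 \geq 0$). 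Hence $((\Lambda^\flat-\Lambda)g,g)_{L^2(\Gamma_N)^3} \geq 0$ on a finite-codimensional $V^\perp$, and Lemma \ref{lem_finV}(b) delivers part (1).

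For part (2), I argue by contradiction. If $\Lambda^\flat-\Lambda$ has only finitely many negative eigenvalues, Lemma \ref{lem_finV}(a) gives a finite-dimensional $V_1$ with $((\Lambda^\flat-\Lambda)g,g)_{L^2(\Gamma_N)^3} \geq 0$ on $V_1^\perp$. Swapping the coefficient triples in Lemma \ref{lem_monotonicity_ineq1} produces a finite-dimensional $V_2$ and the reverse bound
\begin{equation*}
((\Lambda^\flat-\Lambda)g,g)_{L^2(\Gamma_N)^3} \leq \int_\Omega 2(\mu-\mu^\flat)|\hat\nabla u|^2 + (\lambda-\lambda^\flat)|\nabla\cdot u|^2 + \omega^2(\rho^\flat-\rho)|u|^2\,dx
\end{equation*}
on $V_2^\perp$, where now $u$ solves \eqref{eq_bvp1} with the test coefficients $(\lambda^\flat,\mu^\flat,\rho^\flat)$. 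Since $B\not\subset\osupp(D)$, I pick a ball $B'\Subset B\setminus\osupp(D)$ and a Lipschitz open set $D_\star$ with $\osupp(D)\Subset D_\star' \Subset D_\star$ disjoint from $B'$, arranged so that $(D_\star,B')$ satisfies \eqref{eq_Dassump}. Proposition \ref{prop_loc2} applied to this pair with the forbidden subspace $V_1\oplus V_2$ yields boundary data $g_j\in(V_1\oplus V_2)^\perp$ and solutions $u_j$ such that $\|u_j\|_{L^2(D_\star)}$, $\|\hat\nabla u_j\|_{L^2(D_\star')}$, $\|\nabla\cdot u_j\|_{L^2(D_\star')} \to 0$ while $\|u_j\|_{L^2(B')}$, $\|\hat\nabla u_j\|_{L^2(B')}$, $\|\nabla\cdot u_j\|_{L^2(B')}\to\infty$. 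The positive part of the upper bound, supported in $D\subset D_\star'$, then tends to $0$, whereas the negative part
\begin{equation*}
-\int_{B'}\big[\,2\alpha_2|\hat\nabla u_j|^2 + \alpha_1|\nabla\cdot u_j|^2 + \omega^2\alpha_3|u_j|^2\,\big]\,dx
\end{equation*}
tends to $-\infty$, because $|\alpha|\neq 0$ forces at least one $\alpha_k>0$ and the corresponding norm on $B'$ blows up. This contradicts the lower bound on $V_1^\perp$.

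The main obstacle is that, in contrast to Theorem \ref{thm_inclusionDetection}, the monotonicity integrand now contains $\hat\nabla u$ and $\nabla\cdot u$ in addition to $u$, so $L^2$-localization of $u$ alone (Proposition \ref{prop_localization1}) is insufficient; one needs the derivative-aware Proposition \ref{prop_loc2}, which in turn rests on the non-trivial divergence criterion developed in subsection \ref{sec_div}. A secondary delicate point is that since only the norm $|\alpha|$ is assumed nonzero, one must ensure the \emph{simultaneous} blow-up of all three quantities $|u_j|$, $|\hat\nabla u_j|$, $|\nabla\cdot u_j|$ on $B'$, so that whichever $\alpha_k$ is positive indeed overwhelms the vanishing positive contributions from $D$.
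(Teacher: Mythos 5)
Your overall template is the right one, and part (1) coincides exactly with the paper's argument: apply Lemma \ref{lem_monotonicity_ineq1} with $\Lambda_1 = \Lambda$, $\Lambda_2 = \Lambda^\flat$, check pointwise non-negativity of the integrand, and conclude with Lemma \ref{lem_finV}.

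There is, however, a genuine gap in part (2). In your reverse application of Lemma \ref{lem_monotonicity_ineq1} (with $\Lambda_1 = \Lambda^\flat$, $\Lambda_2 = \Lambda$), the right-hand side is evaluated on the solution $u$ of \eqref{eq_bvp1} with the \emph{test} coefficients $(\lambda^\flat,\mu^\flat,\rho^\flat)$. You then feed these solutions into Proposition \ref{prop_loc2}. But Proposition \ref{prop_loc2} is built on Lemma \ref{lem_blowupCand} and, through it, on the unique continuation results Proposition \ref{prop_UCP}/\ref{prop_bndryUCP} and on Corollary \ref{cor_divNonZero}. These require $\mu \in W^{1,\infty}(\Omega)$ globally (and smoothness on the blow-up set), whereas $\mu^\flat = \mu_0 + \alpha_2\chi_B$ has a jump across $\partial B$ and is therefore not Lipschitz. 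So the Runge approximation and the divergence criterion simply do not apply to solutions for the coefficient triple you localize with, and the construction of the $u_j$'s is not justified.

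The paper handles this by interposing a second, smooth test triple $(\lambda^\sharp,\mu^\sharp,\rho^\sharp) := (\lambda_0 + \alpha_1\phi,\; \mu_0 + \alpha_2\phi,\; \rho_0 - \alpha_3\phi)$ with $\phi \in C^\infty(\Omega)$, $\phi = 1$ on a smaller ball $B_0 \Subset B$ and $\phi = 0$ off $B$. Since $\lambda^\flat \geq \lambda^\sharp$, $\mu^\flat \geq \mu^\sharp$, $\rho^\sharp \geq \rho^\flat$, Lemma \ref{lem_LambdaMono} gives $(\Lambda^\sharp g,g) \geq (\Lambda^\flat g,g)$ modulo a finite-dimensional subspace, so it suffices to drive $((\Lambda^\sharp-\Lambda)g_j,g_j)$ to $-\infty$. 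The monotonicity inequality is then applied to $\Lambda^\sharp$ and $\Lambda$, and the localized solutions are constructed for the smooth coefficients $(\lambda^\sharp,\mu^\sharp,\rho^\sharp)$, for which Proposition \ref{prop_loc2} is legitimately available. Everything else in your argument --- the choice of $D_\star$ and $B'$, the role of $\osupp(D)$, the observation that the negative terms are supported on $B$ and one of the $\alpha_k$ must be positive --- carries over once this smoothing/bridging step is inserted. (A secondary point: Proposition \ref{prop_loc2} controls $\hat\nabla u_j$ and $\nabla\cdot u_j$ only on compactly contained subsets of the blow-up region, so strictly you need an additional $B'' \Subset B'$ for those two quantities, but this is cosmetic.)
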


\begin{proof}
Notice firstly that $\Lambda - \Lambda^\flat$ and $\Lambda - \Lambda^\sharp$  are  by Lemma \ref{lem_ND_cmpct_SA} 
compact self-adjoint operators.
We begin by proving $(2)$.
We can assume that $B \cap \osupp{D} = \emptyset$, by making the ball $B$ smaller if needed.
Assume that the claim is false and thus that there are only finitely 
many negative eigenvalues.

We will use the localized solutions of Proposition \ref{prop_loc2}. The test coefficients $\lambda^\flat,\mu^\flat$ and
$\rho^\flat$ are however not smooth enough. To deal with this we choose a ball $B_0 \Subset B$ and a
$\phi \in C^\infty(\Omega)$.
$$
\phi(x) = 1, \; x \in B_0, \quad \phi(x) = 0, \; x \in \Omega \setminus B, \quad 0 \leq \phi(x) \leq 1,\; x \in \Omega.
$$
Now define
\begin{equation} \label{eq_testCoeff_smooth}
\begin{aligned} 
\lambda^\sharp(x) &= \lambda_0 + \alpha_1 \phi(x), \\
\mu^\sharp(x) &= \mu_0 + \alpha_2\phi(x), \\
\rho^\sharp(x) &= \rho_0 - \alpha_3\phi(x), \\
\end{aligned}
\end{equation}
where $\alpha_j$ are as in \eqref{eq_testCoeff}. Clearly we have that
$$
\lambda^\flat  \geq \lambda^\sharp,\quad
\mu^\flat \geq \mu^\sharp, \quad
\rho^\sharp \geq \rho^\flat.
$$
Thus by Lemma \ref{lem_LambdaMono} there is a subspace $V_1$ for which
\begin{align}  \label{eq_LambdaMono2}
((\Lambda^\sharp-\Lambda) g,g)_{L^2(\Gamma_N)^3} \geq 
((\Lambda^\flat -\Lambda) g,g)_{L^2(\Gamma_N)^3}, \qquad \forall g \in V_1^\perp,
\end{align}
where $\Lambda^\flat$ is the Neumann-to-Dirichlet map corresponding to the coefficients in \eqref{eq_testCoeff}
and where $\Lambda^\sharp$ corresponds to the coefficients in \eqref{eq_testCoeff_smooth}.

We assumed that $\Lambda^\flat - \Lambda$ has finitely many negative eigenvalues.
According  to Lemma \ref{lem_finV}, we then have that there is a finite 
dimensional subspace $V_2 \subset L^2(\Gamma_N)^3$, such that
\begin{align}  \label{eq_pos}
\big( (\Lambda^\flat - \Lambda)g,\,g \big)_{L^2(\Gamma_N)^3} \geq 0 \qquad \text{ for } g \in V_2^\perp.
\end{align}
To obtain a contradiction we consider Lemma \ref{lem_monotonicity_ineq1}, 
where $\Lambda_1 = \Lambda^\sharp$ and $\Lambda_2 = \Lambda$ and which is rearranged to give
\begin{align*} 
\big(  (\Lambda^\sharp - \Lambda)g, \,g  \big)_{L^2(\Gamma_N)^3} 
&\leq
\int_\Omega 
2(\mu-\mu^\sharp ) |\hat \nabla u_1 |^2 + (\lambda - \lambda^\sharp ) |\nabla \cdot u_1|^2  
+ \omega^2(\rho^\sharp - \rho ) |u_1|^2 \,dx  \\
&\leq
\int_\Omega  
2(\psi_\mu\chi_{D_2} - \alpha_2 \phi )|\hat \nabla u_1|^2 
+(\psi_\lambda\chi_{D_1} - \alpha_1 \phi )|\nabla \cdot u_1|^2   \\
&\qquad+ \omega^2(-\alpha_3 \phi + \psi_\rho\chi_{D_3}  ) |u_1|^2 \,dx,
\end{align*}
where $u_1$ solves \eqref{eq_bvp1} with coefficients given by \eqref{eq_testCoeff_smooth} 
boundary condition $g \in V_3^\perp$, where $V_3$ is the subspace given by Lemma \ref{lem_monotonicity_ineq1}.

By Proposition \ref{prop_loc2}
we can choose a sequence $g_j  = (\gamma_\C u_{1,j})|_{\p\Omega} \in (V_1 \oplus V_2 \oplus V_3)^\perp$ of
boundary data that give the  solutions $u_{1,j}$ to \eqref{eq_bvp1}, with the coefficients
\eqref{eq_testCoeff_smooth}, such that
$$
\| u_{1,j} \|_{ L^2(D^+)^3 }, 
\;\| \hat \nabla u_{1,j} \|_{ L^2(\osupp(D))^{3\times 3} }, 
\;\| \nabla \cdot u_{1,j} \|_{ L^2(\osupp(D))} 
\to 0, 
$$
where $\osupp(D) \Subset D^+$, and  $D^+ \cap B = \emptyset$,  and such that
$$
\| u_{1,j} \|_{ L^2(B)^3 }, 
\;\| \hat \nabla u_{1,j} \|_{ L^2(B_0)^{3\times 3} }, 
\;\| \nabla \cdot u_{1,j} \|_{ L^2(B_0)} 
\to \infty, 
$$
as $j \to \infty$.
Inserting these solutions to the previous inequality and using that $B \cap \osupp(D) = \emptyset$,
we get that
\begin{align*} 
\big(  (\Lambda^\sharp  - \Lambda)g_j, \,g_j  \big)_{L^2(\Gamma_N)^3} 
&\leq
C \int_{D} |\hat \nabla u_{1,j}|^2 
+|\nabla \cdot u_{1,j}|^2  
+|u_{1,j}|^2 \,dx  \\
&\quad- \int_{B_0} \alpha_1 |\nabla \cdot u_{1,j}|^2 
+ \alpha_2 |\hat \nabla u_{1,j}|^2 
+ \alpha_3 |u_{1,j}|^2 \,dx.
\end{align*}
Since $|\alpha| \neq 0$ and $\alpha_j \geq 0$, we see that
the last integral becomes large and increasingly negative while the first integral vanishes as $j$ grows, and thus
\begin{align*} 
\big(  (\Lambda^\sharp - \Lambda)g_j, \,g_j  \big)_{L^2(\Gamma_N)^3}  < 0,
\end{align*}
for large enough $j$. This is in contradiction with \eqref{eq_pos} and \eqref{eq_LambdaMono2}, 
since  $g_j \in (V_1\oplus V_2 \oplus V_3)^\perp \subset (V_1 \oplus V_2 )^\perp$.
Part $(2)$ of the claim thus holds.

\medskip
\noindent
Next we prove part $(1)$. Assume that $B \subset D_j$ for  $j \in I$. Choose
$0 \leq \alpha_j \leq m_j$ for  $j \in I$ and $\alpha_j = 0$ for $j \notin I$. 
Moreover choose $\Lambda_1 = \Lambda$ and $\Lambda_2 = \Lambda^\flat$ in Lemma \ref{lem_monotonicity_ineq1}. 
According to Lemma  \ref{lem_monotonicity_ineq1}
there exists a finite dimensional subspace $V \subset L^2(\Gamma_N)^3$, such that 
if $g \in V^\perp$, then
\begin{align*}  
\big(  (\Lambda^\flat - \Lambda)g, \,g  \big )_{L^2(\Gamma_N)^3}
&\geq
\int_\Omega 
2(\mu -\mu^\flat ) |\hat \nabla u_1|^2 
+ (\lambda - \lambda^\flat ) |\nabla \cdot u_1|^2  
+ \omega^2(\rho^\flat -\rho) |u_1|^2 \,dx  \\
&\geq
\int_{D_2}  2( m_2 - \alpha_2 \chi_{B} ) |\hat \nabla u_1|^2 \,dx 
+ \int_{D_1} ( m_1 - \alpha_1 \chi_{B} )|\nabla \cdot u_1|^2  \,dx\\
&\quad + \int_{D_3} \omega^2(-\alpha_3\chi_{B} + m_3) |u_1|^2 \,dx  \\
&\geq 0,
\end{align*}
where we use the properties in \eqref{eq_lambdaMuRho}.
We have thus shown that for $\alpha_j$, chosen as 
$0 \leq \alpha_j \leq m_j$ for  $j \in I$ and $\alpha_j = 0$ for $j \notin I$, 
we have that
$$
\big(  (\Lambda^\flat - \Lambda)g, \,g  \big )_{L^2(\Gamma_N)^3} 
\geq 0, \qquad \forall g \in V^\perp.
$$
By Lemma \ref{lem_finV} we have that  $\Lambda^\flat - \Lambda$ has finitely many negative eigenvalues,
which proves part $(1)$.

\end{proof}

\noindent
We can use the previous theorem to give a reconstruction procedure for the outer support of
the inhomogeneous region $D$ as follows.
Choose an open cover $\mathcal{B} = \{ B \}$ of $\Omega$.
By computing $N_B$ which stands for the lowest number of negative eigenvalues of the $\Lambda^\flat - \Lambda$ 
corresponding to a $B$, when varying the parameters as suggested by Theorem 
\ref{thm_inclusionDetection_rho_mu}, we can classify the sets in $\mathcal{B}$. We let 
$$
\mathcal{A} := \{ B \in \mathcal{B} \,:\, N_B < \infty \}.
$$
By Theorem \ref{thm_inclusionDetection_rho_mu} part (2) we have that
$$
B \in \mathcal{A}  \quad\Rightarrow\quad B \subset \osupp(D),
$$
and by Theorem \ref{thm_inclusionDetection_rho_mu} part (1) we have that
$$
B \not\in \mathcal{A}  \quad\Rightarrow\quad B \not \subset D_j,\; j=1,2,3.
$$
So all $B \in \mathcal{A}$ are such that  $B  \subset \osupp(D)$, and for all $B \in \mathcal{B}$
with
$$
B \subset D_j, \quad \text{ for some }j,
$$
we have that $B \in \mathcal{A}$.
Supposing we have chosen $\mathcal{B}$ with enough precision, so that it contains an exact sub cover of $D$,
then 
$$
D = \bigcup_{j=1}^3 D_j \subset \cup \mathcal{A}. 
$$
On the other hand 
$$
\cup\mathcal{A} \subset \osupp(D).
$$
The set $\cup\mathcal{A}$ gives thus  an approximation that contains the inhomogeneous region $D$
and is contained in $\osupp(D)$.
This approximation may differ from $\osupp(D)$, in that sets $B$ that are contained 
in components of $\Omega \setminus D$ that are not connected to $\p \Omega$ (i.e. $B$ that are
in internal cavities in the inhomogeneity)  can be classified incorrectly. 
Note however that, since these possibly misclassified regions are contained 
in $\osupp(D)$ and are disconnected  from $\p \Omega$, we can as a last step
topologically inspect $\mathcal{A}$ and fill in any internal holes, to obtain an approximation
of $\osupp(D)$, that becomes arbitrarily good as we use more precise covers $\mathcal{B}$
of $\Omega$.

\medskip
\noindent
The criterion offered by Theorem \ref{thm_inclusionDetection_rho_mu} and Theorem \ref{thm_inclusionDetection}
suffer from the problem that part (2) of the claims involve an infinite bound. 
We can however replace the bound in Theorem \ref{thm_inclusionDetection_rho_mu} (and Theorem \ref{thm_inclusionDetection}), 
with a finite bound, which we can compute from background coefficients.
More specifically we can derive the following  version
of Theorem \ref{thm_inclusionDetection_rho_mu}.
Note that the bound in the next Theorem is in all likelihood not optimal. 
For more on this see Theorem 1 in \cite{HPS19a}.

\begin{thm}\label{thm_inclusionDetection_EVbound}
Suppose the assumptions of Theorem \ref{thm_inclusionDetection_rho_mu} hold.
Let $M_0\in \R$ be defined as
$$
M_0 := d(\lambda_0,\mu_0,\rho_{0}),
$$
where $d(\lambda_0,\mu_0,\rho_{0})$ is the number of positive eigenvalues of $L_{\lambda_0,\mu_0,\rho_{0}}$ 
as defined in \eqref{eq_def_d}. Then
\begin{enumerate}

\item Assume that  $B \subset D_j$, for $j \in I$, for some $I \subset\{1,2,3\}$.  
Then for all $\alpha_j$ with $\alpha_j \leq m_j$, $j \in I$, and $\alpha_j = 0$, 
$j \notin I$, the map $\Lambda^\flat - \Lambda$ has at most $M_0$ negative eigenvalues. \\

\item If $B \not \subset \osupp(D)$, then for all $\alpha$, $|\alpha| \neq 0$, 
the map  $\Lambda^\flat - \Lambda$ has more than $M_0$ negative eigenvalues,

\end{enumerate}
where $\Lambda$ is the Neumann-to-Dirichlet map the coefficients in \eqref{eq_lambdaMuRho}
and $\Lambda^\flat$ is the Neumann-to-Dirichlet map for \eqref{eq_testCoeff}, and where
the eigenvalues of $\Lambda^\flat - \Lambda$ are counted with multiplicity.
\end{thm}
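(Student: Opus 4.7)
My plan is to reduce this theorem to a finite-dimensional refinement of Theorem \ref{thm_inclusionDetection_rho_mu}, whose two parts require very different levels of work.

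Part (2) will be essentially immediate: Theorem \ref{thm_inclusionDetection_rho_mu}(2) already gives infinitely many negative eigenvalues whenever $B\not\subset\osupp(D)$ and $|\alpha|\neq 0$, and infinitely many is certainly more than the finite number $M_0$. So no new argument is needed beyond quoting the earlier result.

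For Part (1) I will rerun the proof of Theorem \ref{thm_inclusionDetection_rho_mu}(1) but track the dimension of the exceptional subspace $V$ carefully. Choosing $\Lambda_1=\Lambda$ and $\Lambda_2=\Lambda^\flat$ in Lemma \ref{lem_monotonicity_ineq1}, the coefficient hypotheses $\alpha_j\leq m_j$ for $j\in I$ and $\alpha_j=0$ for $j\notin I$ make the right-hand integrand non-negative exactly as in the proof of Theorem \ref{thm_inclusionDetection_rho_mu}(1), so one obtains a subspace $V\subset L^2(\Gamma_N)^3$ with
$$ \big((\Lambda^\flat-\Lambda)g,g\big)_{L^2(\Gamma_N)^3}\geq 0, \qquad \forall g\in V^\perp, $$
and the dimension bound $\dim(V)\leq d(\lambda^\flat,\mu^\flat,\rho^\flat)$. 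Applying Lemma \ref{lem_finV}(b) then says $\Lambda^\flat-\Lambda$ has at most $\dim(V)$ negative eigenvalues counted with multiplicity, so the whole problem reduces to proving the purely spectral inequality
$$ d(\lambda^\flat,\mu^\flat,\rho^\flat)\leq d(\lambda_0,\mu_0,\rho_0)=M_0. $$

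The hard (but conceptually clean) step is this last spectral monotonicity bound; I expect it to be the main obstacle, though it should go through by a standard Courant--Fischer / min--max argument. Because $\lambda^\flat\geq\lambda_0$, $\mu^\flat\geq\mu_0$, and $\rho^\flat\leq\rho_0$ pointwise, the bilinear form associated with the flat-coefficient problem satisfies
$$ B^\flat(u,u)-B_0(u,u) = -\int_\Omega 2(\mu^\flat-\mu_0)|\hat\nabla u|^2 +(\lambda^\flat-\lambda_0)|\nabla\cdot u|^2 -\omega^2(\rho^\flat-\rho_0)|u|^2\,dx \leq 0 $$
for every $u\in\mathcal{V}$. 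Using the max-min characterization
$$\sigma_k = \max_{\substack{W\subset\mathcal{V}\\\dim W=k}}\;\min_{\substack{u\in W\\\|u\|_{L^2}=1}} B(u,u)$$
(which holds by Lemma \ref{lem_diagonaliztion} and Corollary \ref{cor_onorm}, since the eigenfunctions $\{\varphi_k\}$ diagonalize $B$ and form an $L^2$-orthonormal basis), the pointwise inequality on $B$ gives $\sigma_k^\flat\leq\sigma_k^0$ for every $k$. In particular, every positive eigenvalue of $L_{\lambda^\flat,\mu^\flat,\rho^\flat}$ is matched by a positive eigenvalue of $L_{\lambda_0,\mu_0,\rho_0}$, counted with multiplicity, so $d(\lambda^\flat,\mu^\flat,\rho^\flat)\leq d(\lambda_0,\mu_0,\rho_0)$. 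Chaining the bounds yields at most $M_0$ negative eigenvalues for $\Lambda^\flat-\Lambda$, completing Part (1).
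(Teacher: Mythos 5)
Your proof is correct and follows essentially the same approach as the paper: quote Theorem \ref{thm_inclusionDetection_rho_mu}(2) for part (2), then for part (1) chain Lemma \ref{lem_monotonicity_ineq1}, Lemma \ref{lem_finV}(b), and an eigenvalue comparison $d(\lambda^\flat,\mu^\flat,\rho^\flat)\leq d(\lambda_0,\mu_0,\rho_0)$ via Courant--Fischer. The only stylistic difference is that you invoke the max--min characterization of $\sigma_k$ directly for $B(\cdot,\cdot)$, whereas the paper applies a textbook min--max theorem ([Ba80]) to the coercive shifted form $-B_{\tau_0}$; both routes yield $\sigma_k^\flat\leq\sigma_k^0$ from the same pointwise form comparison, so the conclusions agree.
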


\begin{proof}
Part (2) of the claim can be proven as part (2) of Theorem \ref{thm_inclusionDetection_rho_mu}.
It remains to prove (1).
As in the proof of (1) in Theorem \ref{thm_inclusionDetection_rho_mu} we have that
$$
\big(  (\Lambda^\flat - \Lambda)g, \,g  \big )_{L^2(\Gamma_N)^3} 
\geq 0, \qquad g \in V^\perp.
$$
By Lemma \ref{lem_finV} we have that  $\Lambda^\flat - \Lambda$ has less or equally as many negative eigenvalues
as $\dim(V)$, when counted with multiplicity. Combining this with the bound on $\dim(V)$ in Lemma \ref{lem_monotonicity_ineq1}, we get that
$$
 \text{number of negative eigenvalues of $\Lambda^\flat - \Lambda$ } \leq \dim(V ) \leq d(\lambda^\flat,\mu^\flat,\rho^\flat). 
$$
We can use the min-max principle to give an upper bound on $\dim(V)$ using $d(\lambda_0,\mu_0,\rho_{0})$
which we can easily compute, and which will prove the claim. We do this by showing that
\begin{align}  \label{eq_dd}
d(\lambda^\flat,\mu^\flat,\rho^\flat) \leq d(\lambda_0,\mu_0,\rho_{0}).
\end{align}
Let $-B^\flat_{\tau_0}$ and $- B^0_{\tau_0}$ be the bilinear forms related to the coefficients
$\lambda^\flat,\mu^\flat,\rho^\flat$ and  $\lambda_0,\mu_0,\rho_{0}$ respectively.
Let $\tau_0 \leq 0$ be such that both $-B^\flat_{\tau_0}$ and $- B^0_{\tau_0}$ are positive definite
bilinear forms. Note that Proposition \ref{prop_Wellposedness} guarantees that such a $\tau_0 \leq 0$ exists. 
Then by the min-max principle of Theorem 3.2 on p. 97 in \cite{Ba80}, we have that
\begin{align*} 
	-\sigma^\flat_k - \tau_0 = \min_{S_k \subset H^1(\Omega)^3} \max_{v \in S_k} \frac{-B^\flat_{\tau_0}(v,v)}{(v,v)_{L^2}}, \qquad	
	-\sigma^0_k - \tau_0 = \min_{S_k \subset H^1(\Omega)^3} \max_{v \in S_k} \frac{- B^0_{\tau_0}(v,v)}{(v,v)_{L^2}}, 	
\end{align*}
where $S_k \subset H^1(\Omega)^3$ is an arbitrary $k$ dimensional subspace, and 
where $\sigma^\flat_k$ are the eigenvalues related to $L_{\lambda^\flat,\mu^\flat,\rho^\flat}$
and $\sigma^0_k$ are the eigenvalues related to $L_{\lambda_0,\mu_0,\rho_{0}}$.
From definition \eqref{eq_testCoeff} we easily see that
$$
- B^0_{\tau_0}(v,v)
\leq
-B^\flat_{\tau_0}(v,v).
$$
Thus $ -\sigma^0_k - \tau_0 \leq -\sigma^\flat_k - \tau_0$, and  hence $\sigma^0_k \geq \sigma^\flat_k$.
It follows  that $L_{\lambda_0,\mu_0,\rho_{0}}$ has more or equally as many positive eigenvalues 
as $L_{\lambda^\flat,\mu^\flat,\rho^\flat}$, which shows that \eqref{eq_dd} holds.

\end{proof}

\begin{rem} \label{rem_apriori}
We need more information than in Theorem \ref{thm_inclusionDetection_EVbound} to derive
a similar bound $M_0$ for Theorem \ref{thm_inclusionDetection}. This is because  in Theorem \ref{thm_inclusionDetection}
the perturbation in the density has a larger value than the homogeneous background density. 
In this case we need to have an apriori bound  $\rho \leq \rho_M$, and use $\rho_M$
instead of $\rho_0$ when computing $M_0$.
\end{rem}

\begin{rem} \label{rem_opposite_inclusions}
A natural extension of  Theorems \ref{thm_inclusionDetection_rho_mu}
and \ref{thm_inclusionDetection_EVbound} is the case where there is an increase in the density $\rho$
and a decrease in the Lamé parameters $\lambda$ and $\mu$ (as in \cite{HPS19b} Theorems 6.1 and 6.2).
We do not however pursue this here as the arguments are similar.
\end{rem}

\begin{rem} \label{rem_non_const_background}
If not all of the parameters $\lambda,\mu$ and $\rho$ are perturbed,
and we assume that we know the unperturbed coefficients, then we could derive
versions of  Theorems \ref{thm_inclusionDetection_rho_mu}
and \ref{thm_inclusionDetection_EVbound}, where the unperturbed coefficients are non-constant as in Theorem \ref{thm_inclusionDetection}. 
\end{rem}

\section{A numerical realization} \label{sec_numerics}

\noindent
In this section, we give the background for the implementation of the
monotonicity tests and present initial numerical results. In more detail, we
introduce a test model with two inclusions and consider different frequencies
for the reconstruction of these inclusions.
\\
\\
We start with the background of the discretization. Let $G=\lbrace g_1,\ldots, g_m\rbrace$ be an orthogonal system of boundary loads. We discretize the Neumann-to-Dirichlet operator by
\begin{align*}
\overline{\Lambda}=\left(\left(\Lambda g_i, g_j\right)\right)_{i,j=1,\ldots,m}.
\end{align*}
\noindent
\\
\noindent
We base the discretization on Theorem \ref{thm_inclusionDetection_EVbound}.
However, we want to remark, that in our discritization we pick an ad hoc chosen
$\tilde M$ that is smaller than the $M_0$ given by Theorem
\ref{thm_inclusionDetection_EVbound} and is chosen by inspecting the number of
negative eigenvalues. The $M_0$ is too large for the accuracy of our current
tests.  We hope to remedy the situation in future work by linearizing and
regularizing the method.
\\
\\
{\bf Discritization:}
\\
Let $D:=D_1 \cup D_2 \cup D_3$, where the sets are as in (\ref{eq_lambdaMuRho})
and $B\subset \Omega$. We chose a nonnegative constant $\tilde{M} \leq  M_0$ and mark
\begin{itemize}

\item[(1)] $B\subset osupp(D)$ if $\overline{\Lambda^\flat}-\overline{\Lambda}$ has at most $\tilde{M}$ many negative eigenvalues
for some $\alpha$, $|\alpha|\neq 0$,

\item[(2)] $B\not\subset D$ if $\overline{\Lambda^\flat}-\overline{\Lambda}$ has more than $\tilde{M}$ negative eigenvalues 
for all $\alpha=(\alpha_1,\alpha_2,\alpha_3)$, $0\leq \alpha_j\leq m_j$, $j=1,2,3$ with $|\alpha|\neq 0$,

\end{itemize}
\noindent
where $\overline{\Lambda}$ is the discrete Neumann-to-Dirichlet operator of the
coefficients in (\ref{eq_lambdaMuRho}) and $\overline{\Lambda^\flat}$ is the
Neumann-to-Dirichlet map for (\ref{eq_testCoeff}).
\\
\\
Based on this, we present some numerical tests and examine an artificial test object with two inclusions (blue) shown in Figure \ref{testobject}. The size of our test object is $1\,m^3$.
\begin{figure}[H]
\centering 
\includegraphics[width=0.32\textwidth]{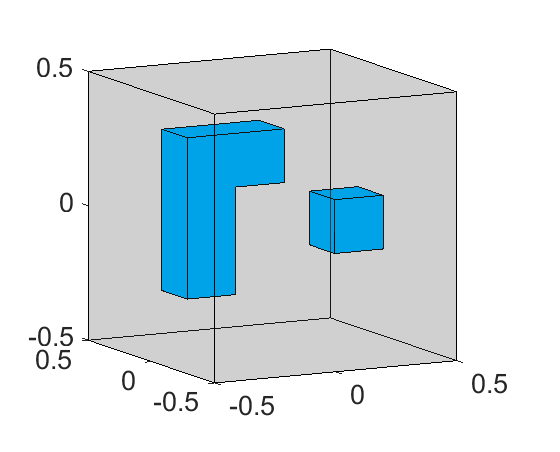}
\caption{Cube with two inclusions (blue) (similar as the model in \cite{EH21}).}\label{testobject}
\end{figure}
\noindent
The parameters of the corresponding materials are given in Table \ref{lame_parameter_mono}
\renewcommand{\arraystretch}{1.3} 
\begin{table} [H]
 \begin{center}
 \begin{tabular}{ |c|c| c |c |}  
\hline
 material & $\lambda_i$ & $\mu_i$ & $\rho_i$\\
  \hline
$i=0$: background &  $6\cdot 10^5$   &  $6\cdot 10^3$  & $3\cdot 10^3$ \\
 \hline
$i=1$: inclusion &  $2\cdot 10^6$ &  $2\cdot 10^4$  & $10^3$ \\
\hline
\end{tabular}
\end{center}
\caption{Lam\'e parameter $\lambda$ and $\mu$ in [$Pa$] and density $\rho$ in [$kg/m^3$].}
\label{lame_parameter_mono}
\end{table}
\noindent
Given an angular frequency $\omega$, the $s$-wavelength and $p$-wavelength (see, e.g., \cite{H01})  for the homogeneous background material are defined via
\begin{align*}
l_p= 2\pi \dfrac{v_p}{k} \quad\text{and}\quad l_s= 2\pi \dfrac{v_s}{k}
\end{align*}
\noindent
with the velocities
\begin{align*}
v_p=\sqrt{\dfrac{\lambda_0+2\mu_0}{\rho_0}} \quad\text{and}\quad v_s=\sqrt{\dfrac{\mu_0}{\rho_0}}.
\end{align*}
\noindent
\\
In order to perform the reconstruction algorithm (see Algorithm \ref{alg_shapeInclusion}), we have to define the required input.
We consider $125$ ($5\times5\times 5$) test inclusions, i.e. cubes. We want to remark that these
test inclusions are chosen in such a way that they perfectly fit into the unknown inclusions.
\\
\\
Further on, we choose for the upper bounds 
\begin{align*}
&\alpha_1=\lambda_1 -\lambda_0=1.4\cdot 10^6 \, Pa, \\
&\alpha_2=\mu_1 -\mu_0=1.4\cdot 10^4 \, Pa, \\
&\alpha_3=\rho_0 -\rho_1=2\cdot 10^3 \, kg/m^3.
\end{align*}
\noindent
For the boundary loads $g_j$, we divide each surface of the cube except its bottom, which
denotes the Dirichlet boundary into $25$ $(5 \times 5)$ squares of equal size. On each of those $125$
squares, we consecutively apply a boundary load of $100\, N/m^2$ in the normal direction
resulting in $125$ boundary loads $g_j$.
\noindent
We want to remark that the simulations are performed with COMSOLMultiphysics with LiveLink for Matlab.
\\
\\
In the following we take a look at different values for $\omega$ and start with
the cases $\omega=10\,rad/s,50\,rad/s$ and the aforementioned $125$ boundary loads.
Figure \ref{plot_eig_k_10} - Figure \ref{plot_eig_k_50} show the number of the
negative eigenvalues of the corresponding test inclusions.

%
%
%

\subsection{Case: $\omega=10$}
\noindent
\\
Figure \ref{plot_eig_k_10} depicts the number of eigenvalues for each test inclusion, where the values of the $x$-axis indicate the index of the $125$ test inclusions and the $y$-axis gives to the number of negative eigenvalues.
\begin{figure}[H]
\centering 
\includegraphics[width=0.37\textwidth]{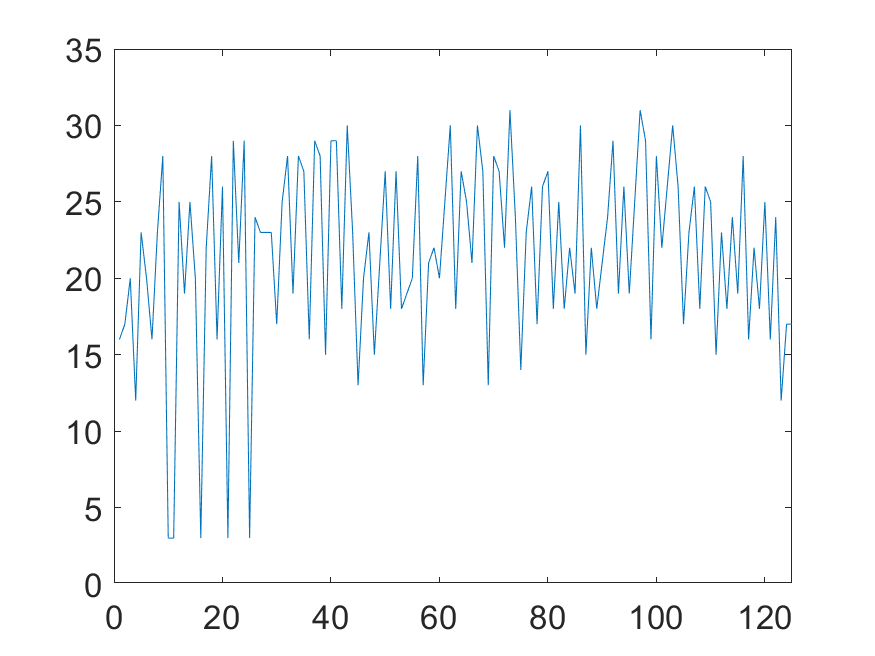}
\caption{Plot of the number of negative eigenvalues for $\omega=10\,rad/s$ ($l_p=9\,m$, $l_s=0.9\,m$ for the homogenous background material).}\label{plot_eig_k_10}
\end{figure}
\noindent
\begin{figure}[H]
\centering 
\includegraphics[width=0.45\textwidth]{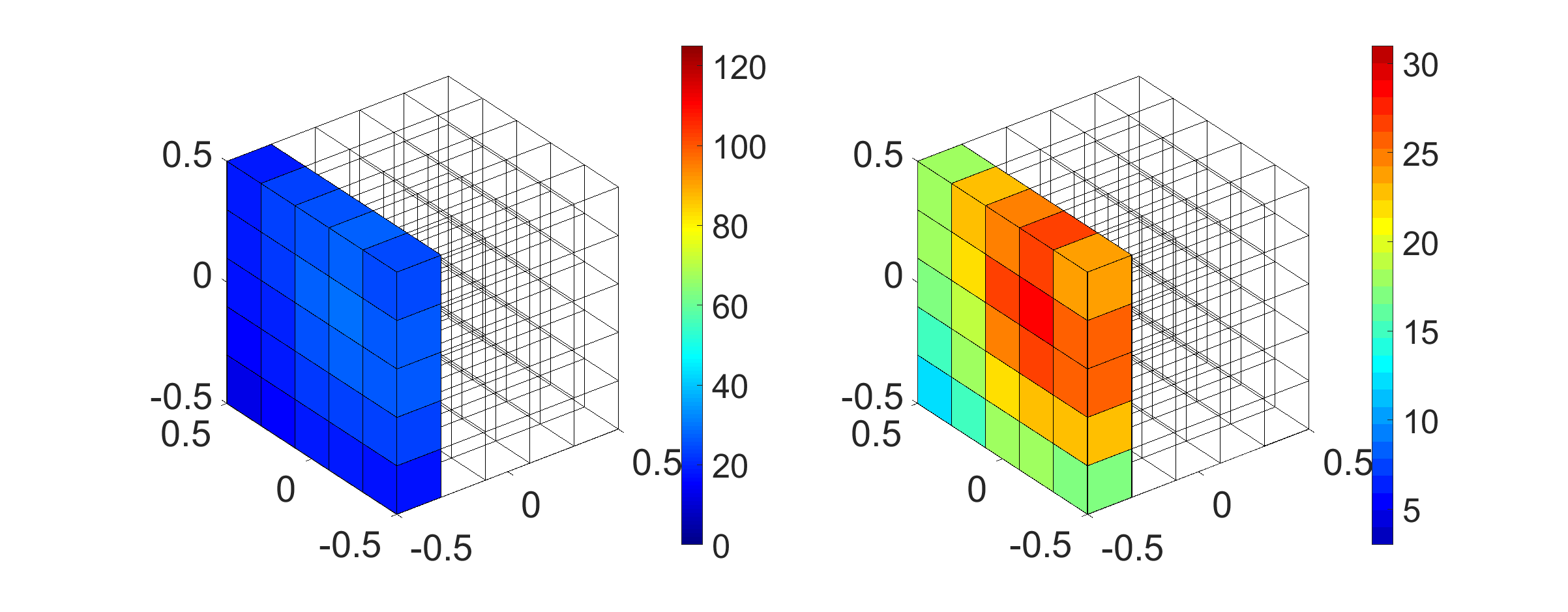}
\includegraphics[width=0.45\textwidth]{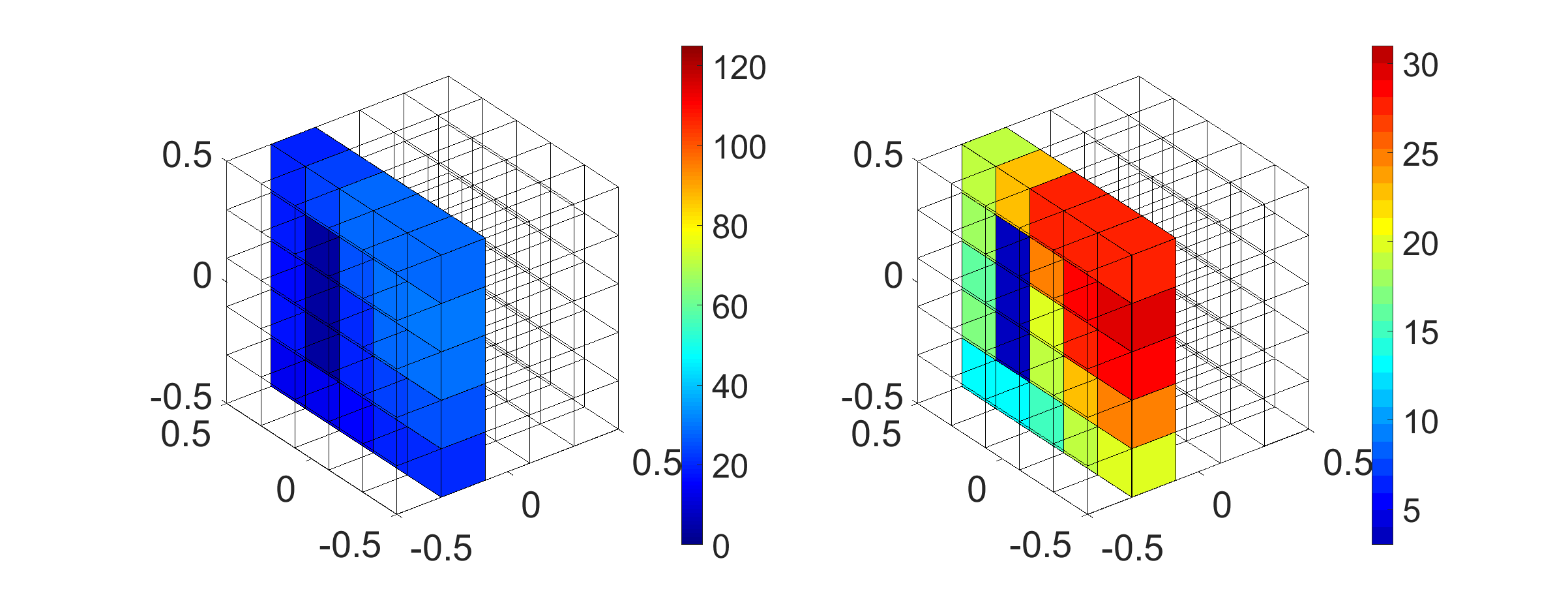}\\
\includegraphics[width=0.45\textwidth]{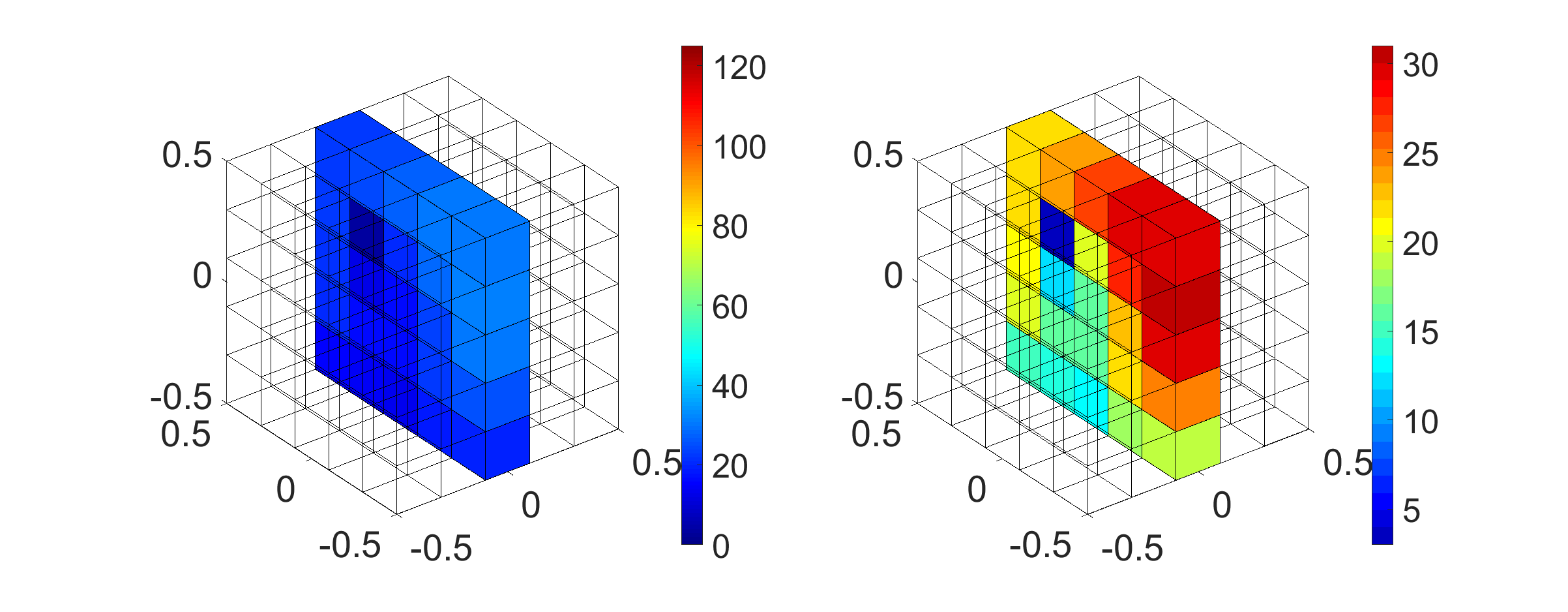}
\includegraphics[width=0.45\textwidth]{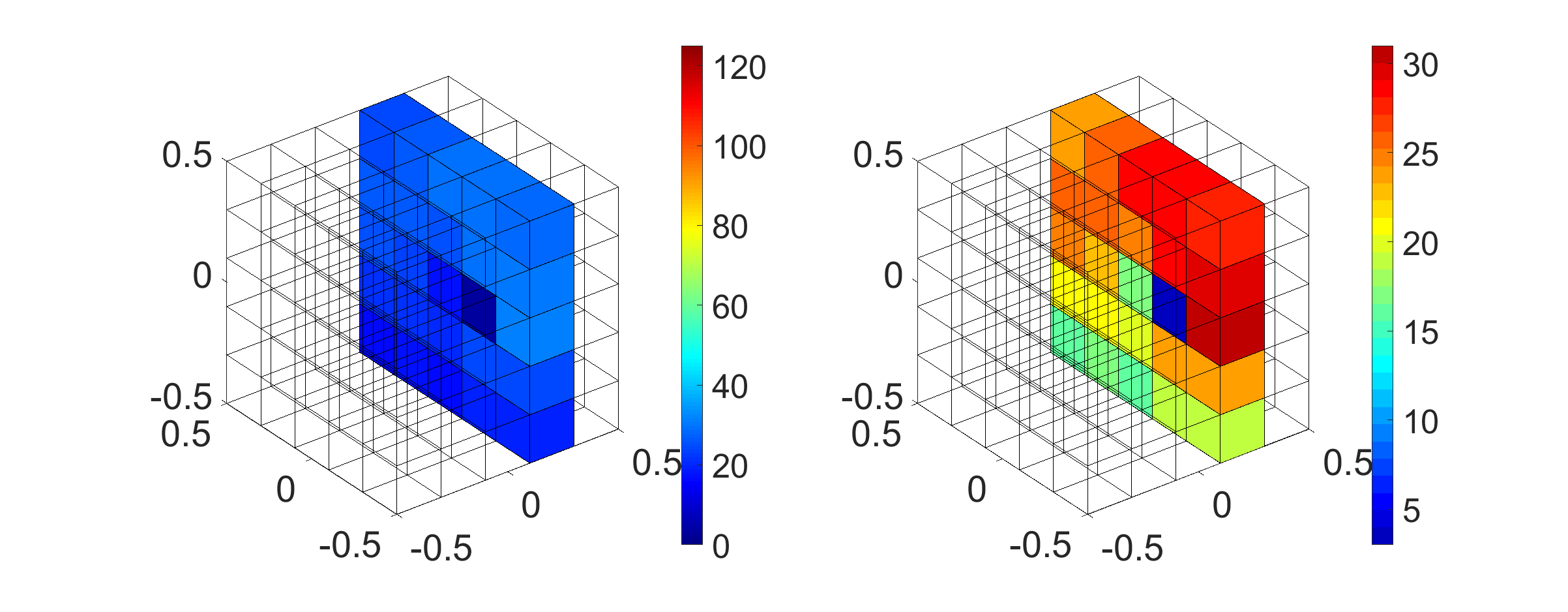}\\
\includegraphics[width=0.45\textwidth]{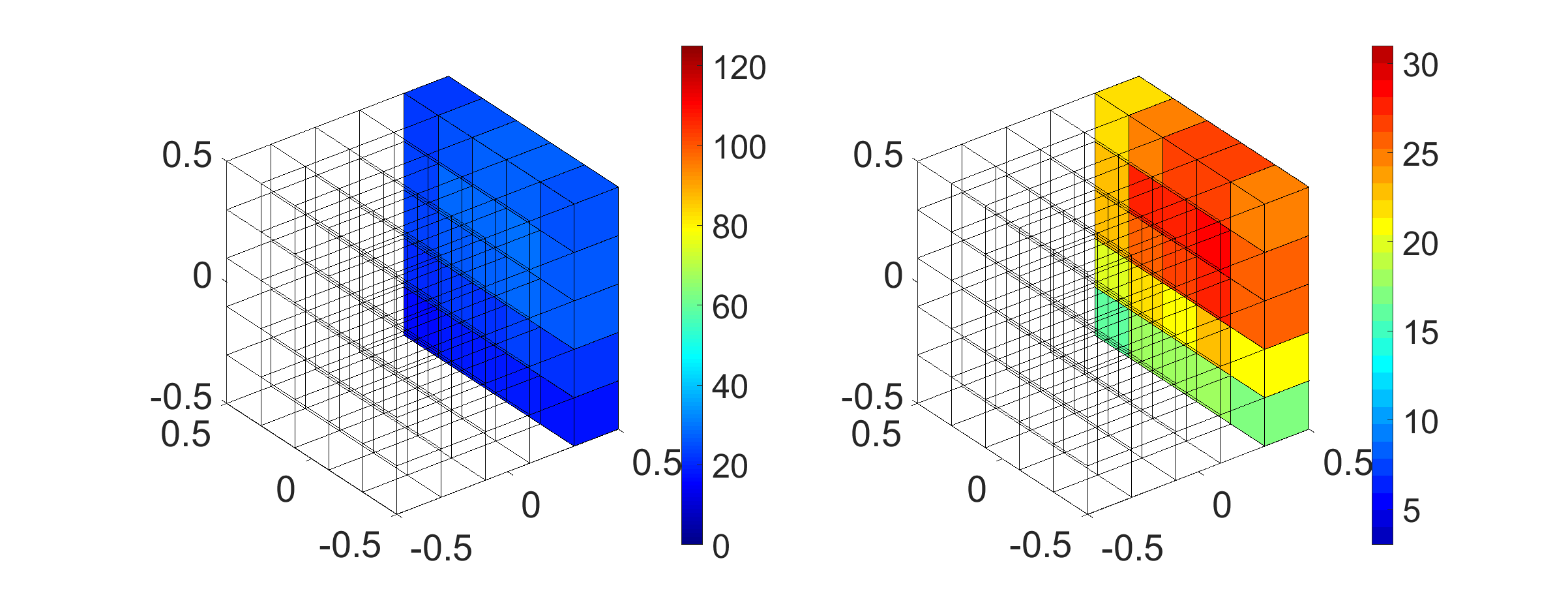}
\caption{Plot of the number of negative eigenvalues for $\omega=10\,rad/s$ w.r.t. each test inclusion: Left hand side: colorbar with complete range, right hand side: colorbar with adopted range}\label{eig_k_10}
\end{figure}
\noindent
We observe that there are distinct spikes in the eigenvalue plots (see Figure
\ref{plot_eig_k_10}) which corresponds to test blocks inside the unknown
inclusions. Hence, the eigenvalue plots suggest a choice of $\tilde{M}$ in the
range between $4$ to $12$ which is less than $M_0=24$ calculated for
$\omega=10\,rad/s$. For more details, the test blocks and their eigenvalues are
depicted in Figure \ref{eig_k_10}, where the blocks which lie inside the
unknown inclusions are clearly visible.

\subsection{Case: $\omega=50$}
\noindent
\\
Similar as before, Figure \ref{plot_eig_k_50} gives us the required information concerning the choice of $\tilde{M}$. Thus, we set $\tilde{M}=50$, where the theoretical $M_0$ was given by $M_0=90$.
\begin{figure}[H]
\centering 
\includegraphics[width=0.37\textwidth]{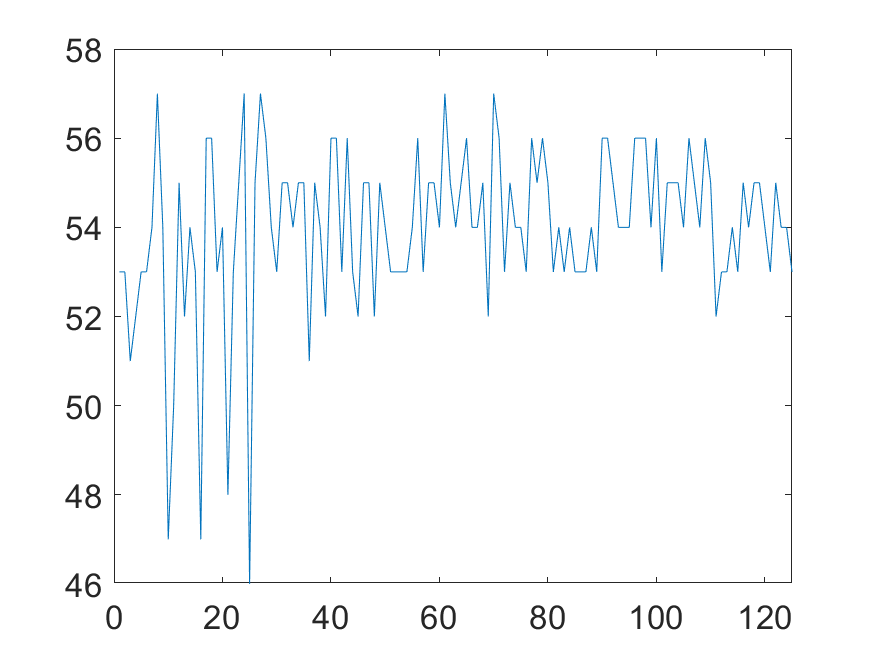}
\caption{Plot of the number of negative eigenvalues for $\omega=50\,rad/s$
($l_p=1.79\,m$, $l_s=0.18\,m$ for the homogenous background
material).}\label{plot_eig_k_50}
\end{figure}
\noindent
\\
The algorithm then compares the calculated positive eigenvalues with the $\tilde{M}$ as stated in the {\bf{Discritization}}.
\\
\\
The calculated values $M_0$ as well as the numerically chosen $\tilde{M}$ for
the frequencies $\omega$ under investigation are given in Table \ref{k_M_0}. We
also added the special case $\omega=0$ (see Figure \ref{plot_eig_k_0}), which
corresponds to the stationary case as considered in \cite{EH21}.
\begin{figure}[H]
\centering 
\includegraphics[width=0.37\textwidth]{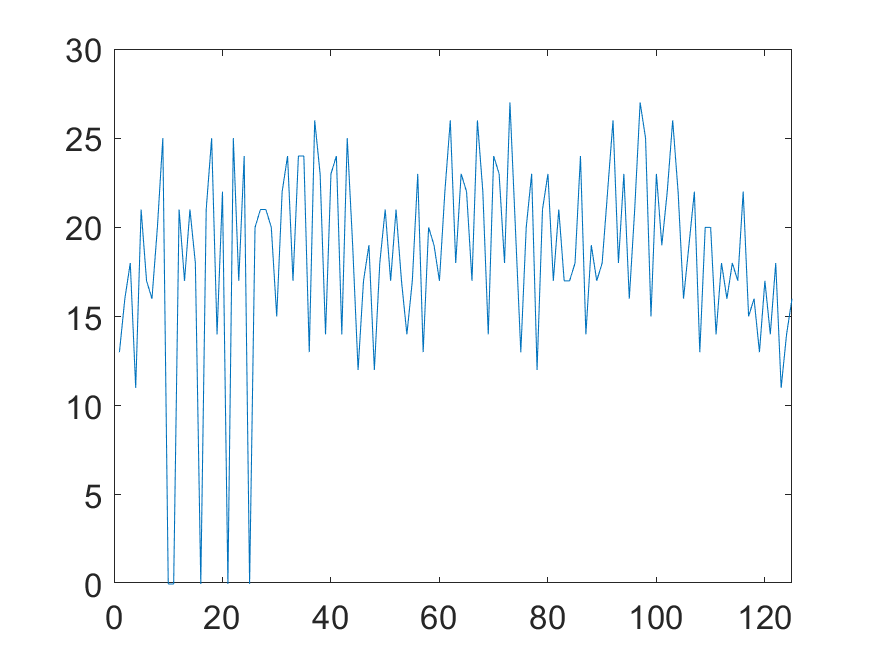}
\caption{Plot of the number of negative eigenvalues for $\omega=0$ for the homogenous background material.}\label{plot_eig_k_0}
\end{figure}
\renewcommand{\arraystretch}{1.3} 
\begin{table} [H]
 \begin{center}
 \begin{tabular}{ |c|c|c|}  
\hline
 $k$ & $M_0$ & $\tilde{M}$\\
  \hline
$\omega =0\,$ & $M_0=0$  & $\tilde{M}=0$\\
 \hline
$\omega =10\, rad/s$ & $M_0=24$ & $\tilde{M}=7$\\
\hline
$\omega =50\, rad/s$ & $M_0=90$  & $\tilde{M}=50$\\
\hline
\end{tabular}
\end{center}
\caption{Calculated values $M_0$ and chosen $\tilde{M}$.}
\label{k_M_0}
\end{table}
\noindent
All in all, applying Algorithm \ref{alg_shapeInclusion} with the corresponding
$\tilde{M}$ given in Table \ref{k_M_0} results in the following reconstructions
for $\omega=0,10, 50 \,rad/s$ (see Figure \ref{reconst_0_10_50}).
\begin{figure}[H]
\centering 
\includegraphics[width=0.4\textwidth]{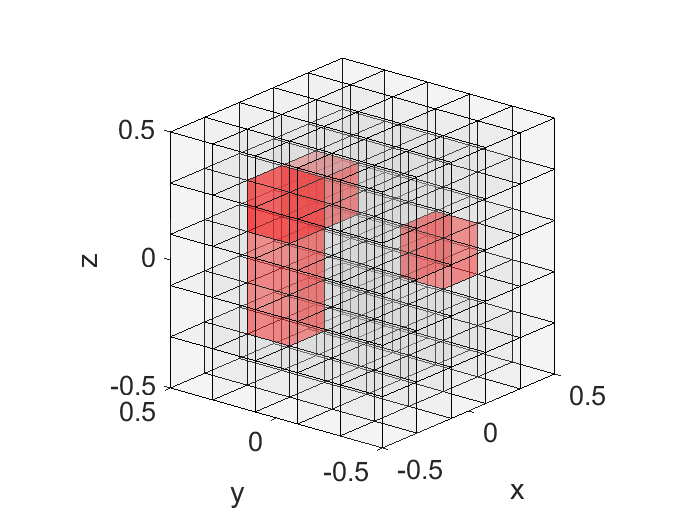}
\caption{Reconstruction for $\omega=0\,rad/s,10\,rad/s, 50\,rad/s$ and the corresponding
$\tilde{M}$ given in Table \ref{k_M_0}.}\label{reconst_0_10_50}
\end{figure}
\noindent
\\
It should be noted that for high frequencies $\omega$, e.g., $\omega=50\,rad/s$, finding a
suitable $\tilde{M}$, which detects the inclusions correctly, gets more and
more difficult. For $k=10$ we could chose $\tilde{M}$ in the range of $4$ to
$12$, but for $k=50$, we can only choose one possible $\tilde{M}$, namely
$\tilde{M}=50$. 
\\
\\
To rectify this, the number of Neumann boundary loads $g_j$ to discretize the operator has to be increased to differentiate the blocks better.
We want to discuss two possible ways to do that. The first is to increase the number of normal boundary loads from $125$ to $500$ and end up with the eigenvalues shown in Figure \ref{plot_eig_k_50_10_x_10}. 
\begin{figure}[H]
\centering 
\includegraphics[width=0.37\textwidth]{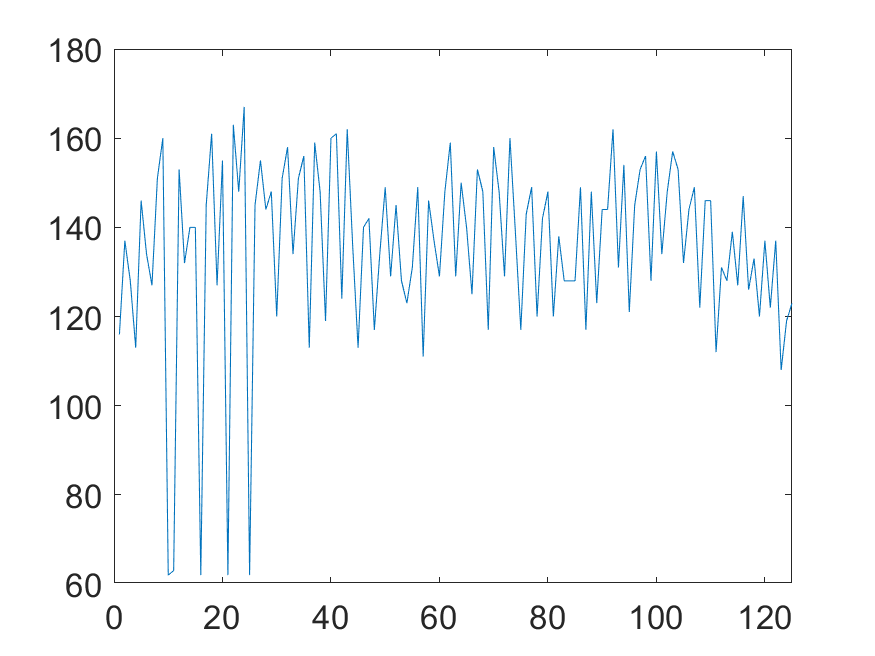}
\caption{Plot of the number of negative eigenvalues for $\omega=50\,rad/s$
($l_p=1.79\,m$, $l_s=0.18\,m$ for the homogenous background material)
and $500$ boundary loads.}\label{plot_eig_k_50_10_x_10}
\end{figure}
\noindent
\\
This allows us to choose $\tilde{M}$ in the range of $63$ to $112$ instead of a single value as we have for the case of $125$ boundary loads. 
\\
\\
Note that we only considered traction in the normal direction in the previous
numerics, which neglects all tangential tractions. One motivation for this is that
that in an practical lab experiment, it is easier to test an object in this
manner. 
We however also considered the case of $125$ normal tractions
plus $250$ tangential tractions resulting in $375$ boundary loads $g_j$, since this 
provides a more complete description of the degrees of freedom.
The results can be seen in Figure \ref{plot_eig_k_50_normal_tangential}. Comparing
Figure \ref{plot_eig_k_50_10_x_10} and Figure
\ref{plot_eig_k_50_normal_tangential}, we see that the distriubution of
negative eigenvalues is nearly the same with Figure \ref{plot_eig_k_50_10_x_10}
covering a slightly wider range. In the case for $375$ boundary loads, we can
choose $\tilde{M}$ in the range of $63$ to $100$.
\begin{figure}[H]
\centering 
\includegraphics[width=0.37\textwidth]{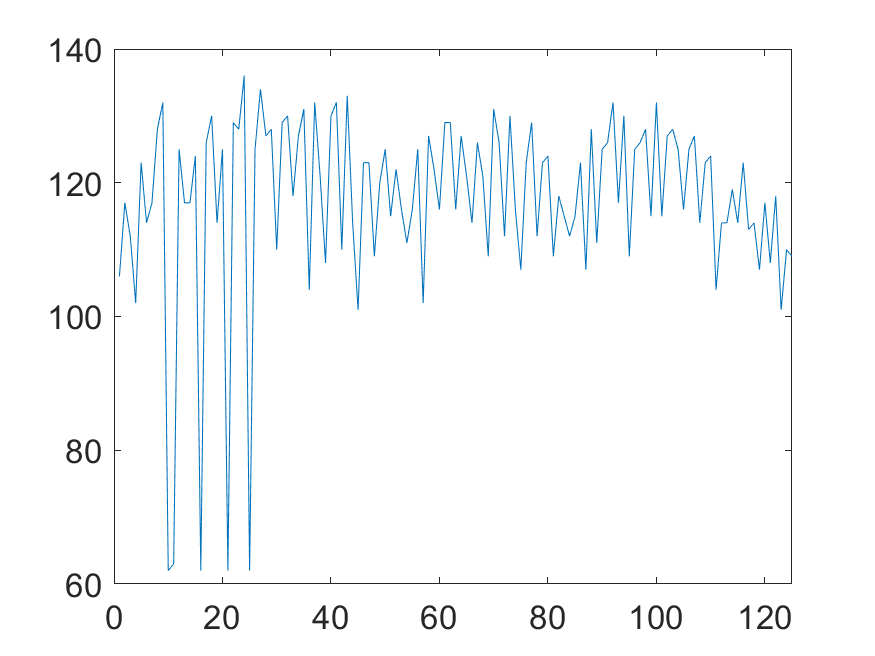}
\caption{Plot of the number of negative eigenvalues for $\omega=50\,rad/s$
($l_p=1.79\,m$, $l_s=0.18\,m$ for the homogenous background material)
and $375$ boundary loads (including $125$ normal and $250$ tangential components).}\label{plot_eig_k_50_normal_tangential}
\end{figure}
\noindent
\\
All in all, taking more boundary loads will stabilize the reconstruction but increases the computation time, respectively.
\\
\\
Finally, we want to remark that our numerical simulations are very preliminary but show that in
principle, the monotonicity tests work. Thus, this will build the basis for further examinations 
and the development of the "linearized monotonicity methods" (see \cite{EH21}) for the stationary case)
which will allow us a faster implementation and as such, a testing with more test inclusions. A further
extension could be the consideration of a monotonicity-based regularization as well as the examination
of the resolution guarantee (similar as considered in
for the stationary elastic inverse problem in \cite{EH22} and \cite{EBH23}, respectively). 
\\
\\
{\bf{Acknowledgement}}
\\
The first author thanks the German Research Foundation (DFG) for funding the project "Inclusion Reconstruction with Monotonicity-based Methods for the Elasto-oscillatory Wave Equation" (reference number 499303971).

\end{document}